\numberwithin{equation}{section}
\newcommand{\Cc}{\mathbb{C}}
\newcommand{\Pp}{\mathbb{P}}
\newcommand{\Qq}{\mathbb{Q}}
\newcommand{\Rr}{\mathbb{R}}
\newcommand{\Zz}{\mathbb{Z}}
\newcommand{\Span}{\operatorname{Span}}
\newcommand{\Center}{\operatorname{center}}
\newcommand{\mld}{{\rm{mld}}}
\newcommand{\lct}{\operatorname{lct}}
\newcommand{\Supp}{\operatorname{Supp}}
\newcommand{\mult}{\operatorname{mult}}
\newcommand{\lf}{\lfloor}
\newcommand{\rf}{\rfloor}
\newcommand{\xto}{\xrightarrow{f}}
\newcommand{\dto}{\dashrightarrow}
\newcommand{\Dd}{\mathcal{D}}
\newcommand{\Oo}{\mathcal{O}}
\newcommand{\Ii}{{\Gamma}}
\newcommand{\Ll}{\mathcal{L}}
\newtheorem*{thmN}{Theorem N}
\newtheorem*{thmP}{Theorem P}
\newtheorem{thm}{Theorem}[section]
\newtheorem{cor}[thm]{Corollary}
\newtheorem{lem}[thm]{Lemma}
\newtheorem{defn}[thm]{Definition}
\newtheorem{prop}[thm]{Proposition}
\newtheorem{claim}[thm]{Claim}
\theoremstyle{definition}
\newtheorem{rem}[thm]{Remark}
\theoremstyle{definition}
\begin{document}

\title{Boundedness of $n$-complements for generalized pairs}


\author{Guodu Chen\\}

\address{Guodu Chen, Beijing International Center for Mathematical Research, Peking University, Beijing 100871, China}
\email{gdchen@pku.edu.cn}

\begin{abstract}
     We show the existence of $n$-complements for generalized pairs with additional Diophantine approximation properties when the coefficients of boundaries belong to a DCC set.
\end{abstract}
\date{\today}

\maketitle
\pagestyle{myheadings}\markboth{\hfill  Guodu Chen\hfill}{\hfill   Boundedness of $n$-complements for generalized pairs       \hfill}

\tableofcontents

\section{Introduction}\label{section1}

     We work over the field of complex numbers $\Cc.$ 
     
     The theory of complements was introduced by Shokurov when he proved the existence of log flips for threefolds \cite{Sho92}. It turns out that the theory of complements plays an important role in the recent development of birational geometry.
      The theory is further developed in \cite{Sho00,PS01,Bir04,PS09,Bir19,HLS19,Sho19,CH20}, see also \cite{FM18,Xuyanning19-1,FMX19}. In \cite{Bir19}, Birkar proved the boundedness of log canonical complements for Fano type varieties when the coefficients of boundaries belong to a hyperstandard sets $\Ii\subseteq [0,1]\cap \Qq$ which is a breakthrough in the study of Fano varieties. The boundedness of log canonical complements plays an important role in various contexts, see \cite{Bir16b,Bir18,LJH18,Bir19,Xu19,BLX19}.
     
     In \cite{HLS19}, Han, Liu and Shokurov proved the boundedness of log canonical complements with additional Diophantine approximation properties for Fano type varieties with any DCC set $\Ii\subseteq[0,1]$. The theory of complements in this case could also be applied to prove the ACC for minimal log discrepancies of exceptional singularities \cite{HLS19} and it has other applications.
     
     For the purpose of induction in the birational geometry, we need to study and explore the space of generalized pairs. They were first introduced in \cite{BZ16} to deal with the effectivity of Iitaka fibrations.
     In recent years, they have found applications in various contexts, such as the boundedness of complements for Fano varieties \cite{Bir19}, and Fujita's spectrum conjecture \cite{HL17}, see \cite{Fil20,HL18,HL19,LT19} for more works.    
     
     It is not clear that one can address the nature questions for the usual pairs in the setting of generalized pairs, such as the cone theorem. On the other hand, it is known that the nonvanishing conjecture fails for genenrlaized polarized pairs, and we can only expect the numerical nonvanishing for genenrlaized polarized pairs, see \cite{HL20,LP18-1,LP18-2}. In this paper, we focus on the boundedness of complements for generalized pairs with additional Diophantine approximation properties, and show the existence of $n$-complements for generalized pairs with DCC coefficients. 
     
     Recall that the rational envelope $V\subseteq\Rr^m$ of a point $\bm{v}\in\Rr^m$ is the smallest affine subspace containing $\bm{v}$ which is defined over the rationals. For any point $\bm{v}:=(v_1,\dots,v_m)\in \Rr^m,$ we define $||\bm{v}||_{\infty}:=\max_{1\le i\le m}\{|v_i|\}$. Note that our definition of complements for generalized pairs is a slightly generalization of the usual definition of complements for generalized pairs, see Definition \ref{defn:complements}.

\begin{thm}\label{thm: existence of n complement for generalized pairs}
     Let $d,p$ and $s$ be positive integers, $\epsilon$ a positive real number, $\Ii\subseteq [0,1]$ a DCC set, $||.||$ a norm on $\Rr^s$, $\bm{v}_0\in\mathbb \Rr^s\setminus\Qq^s$ a point, $V\subseteq\Rr^s$ the rational envelope of $\bm{v}_0$, and $\bm{e}\in V$ a non-zero vector. Then there exists a positive integer $n$ and a point $\bm{v}\in V$ depending only on $d,p,s,\epsilon,\Ii,||.||,\bm{v}_0$ and $\bm{e}$ satisfying the following. 
	
	Assume that $(X,B+M)$ is a generalized pair with data $X'\xrightarrow{f}X\to Z$ and $M'=\sum \mu_jM_j'$ such that
	\begin{itemize}
		\item $\dim X=d$,	
		\item $X$ is of Fano type over $Z$,
		\item $B\in\Ii$, that is, the coefficients of $B$ belong to $\Ii,$ 
		\item $\mu_j\in\Gamma$ and $M_j'$ is b-Cartier nef$/Z$ for any $j$, and
		\item $(X/Z,B+M)$ is $\Rr$-complementary.
	\end{itemize}
	Then
	\begin{enumerate}
		\item $($existence of $n$-complements$)$ for any $z\in Z$, there exists an $n$-complement $(X/Z\ni z,B^++M^+)$ of $(X/Z\ni z,B+M)$, moreover, if $\Span_{\Qq_{\ge0}}(\bar{\Ii}\backslash\Qq)\cap (\Qq\backslash\{0\})=\emptyset$, then we may pick $B^+\ge B$ and $\mu_j^+\ge\mu_j$ for any $j$, where $M^{+'}=\sum \mu_j^+M_j'$ and $\bar{\Ii}$ is the closure of $\Ii$,
		\item $($divisibility$)$ $p|n$,
		\item $($rationality$)$ $n\bm{v}\in \Zz^s$,
		\item $($approximation$)$ $||\bm{v}_0-\bm{v}||<\frac{\epsilon}{n}$, and
		\item $($anisotropic$)$ $||\frac{\bm{v}_0-\bm{v}}{||\bm{v}_0-\bm{v}||}-\frac{\bm{e}}{||\bm{e}||}||<\epsilon.$
	\end{enumerate}	
\end{thm}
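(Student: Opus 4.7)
The approach is to extend the strategy of \cite{HLS19} for usual pairs to the generalized pair setting. The key idea is to approximate the possibly-irrational coefficients of $B$ and the $\mu_j$'s by rational ones in a controlled way (using Diophantine approximation in the rational envelope $V$), reducing the construction of an $n$-complement to the known boundedness of $n$-complements for generalized pairs with rational coefficients (in the spirit of \cite{Bir19}).

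First, parametrize the coefficient data. Since $B$ has coefficients in the DCC set $\Ii$, each $\mu_j\in\Ii$, and $(X/Z,B+M)$ is $\Rr$-complementary with $X$ of Fano type over $Z$, an ACC-type argument combined with the DCC hypothesis yields, up to the relevant bounded family, only finitely many configurations for the coefficients of $B$ and the $\mu_j$'s. I express these coefficients as $\Qq$-affine-linear functions of a vector in the rational envelope $V$, evaluated at $\bm{v}_0$; the rationality of the envelope makes this parametrization sensible and compatible with the approximation step.

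Second, perform the Diophantine approximation in $V$. Since $\bm{v}_0\notin\Qq^s$ but $V$ is defined over $\Qq$, rational points are dense in $V$, and a Dirichlet-type result in the affine rational subspace $V$ produces a positive integer $n$ with $p\mid n$ together with a point $\bm{v}\in V$ such that $n\bm{v}\in\Zz^s$, $||\bm{v}_0-\bm{v}||<\epsilon/n$, and the anisotropic bound in (5) holds. The quantitative form of this approximation gives $n$ depending only on the stated data. Substituting $\bm{v}$ for $\bm{v}_0$ in the parametrization yields rational coefficients $b_i^+$ and $\mu_j^+$ defining a generalized pair $(X,B^++M^+)$ for which $n(K_X+B^++M^+)$ has the correct Cartier/nef divisibility. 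For $\bm{v}$ close enough to $\bm{v}_0$, the new pair remains $\Rr$-complementary (hence $\Qq$-complementary) by openness of $\Rr$-complementariness. I then apply the known boundedness of $n$-complements for generalized pairs with rational coefficients to produce an $n$-complement of $(X/Z\ni z,B^++M^+)$, which in turn is an $n$-complement of $(X/Z\ni z,B+M)$.

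The main obstacle is coupling the Diophantine approximation and the complement construction: the single integer $n$ must simultaneously be the denominator realizing (3)--(4), satisfy $p\mid n$, and serve as the Cartier index of $n(K_X+B^++M^+)$, uniformly over the family described by $d,p,s,\epsilon,\Ii,||.||,\bm{v}_0,\bm{e}$. In the generalized setting, one must additionally verify that $\sum\mu_j^+M_j'$ remains nef$/Z$ under the small perturbation and that the b-Cartier hypothesis on the $M_j'$ combines with $n\bm{v}\in\Zz^s$ to give an integral moduli part. Finally, the monotonicity addendum $B^+\ge B$ and $\mu_j^+\ge\mu_j$ under $\Span_{\Qq_{\ge0}}(\bar{\Ii}\setminus\Qq)\cap(\Qq\setminus\{0\})=\emptyset$ requires choosing $\bm{e}$ inside the cone of upward perturbations (whose existence the hypothesis guarantees) and then using the anisotropic bound (5) to force the approximation to move in that direction; making all these constraints compatible is the heart of the argument.
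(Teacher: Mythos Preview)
There is a genuine gap in the coupling step, and also a conflation of two unrelated rational envelopes.

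First, $V$ is the rational envelope of the \emph{auxiliary} point $\bm{v}_0\in\Rr^s$, which is part of the input data and has no a priori relation to the pair $(X,B+M)$ or to $\Ii$. So the coefficients of $B$ and the $\mu_j$ cannot in general be written as $\Qq$-affine functions of a point of $V$ evaluated at $\bm{v}_0$; the parametrization you describe does not exist. Conditions (2)--(5) are purely about $(n,\bm{v})$ and $\bm{v}_0$, while condition (1) is purely about the pair; what has to be produced is a single $n$ doing both jobs.

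Second, and more seriously, your plan ``perturb the coefficients to rationals, then apply the rational complement theorem'' does not yield the prescribed $n$. The rational-coefficient theorem (the paper's Theorem~\ref{thm: relncomplforfiniterat}, the relative version of \cite[Theorem 1.10]{Bir19}) outputs \emph{some} integer depending on $d$ and the rational coefficient set; it does not let you impose that this integer equal the $n$ already fixed by the Diophantine approximation of $\bm{v}_0$. You flag this as ``the main obstacle'' but do not say how to resolve it, and it cannot be resolved in the order you propose.

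The paper breaks this circularity with an intermediate structure you are missing: the $(n_0,\Ii_0)$-decomposable $\Rr$-complement (Theorem~\ref{thm: (n,I)complforfiniterat}). One first produces, depending only on $d$ and $\Ii$, a fixed $n_0$ and finite $\Ii_0\subseteq(0,1]$ so that over any $z$ the pair has an $\Rr$-complement of the form $\sum_i a_i(K_X+\tilde B_i+\tilde M_i)$ with $a_i\in\Ii_0$ and each $(X/Z\ni z,\tilde B_i+\tilde M_i)$ already an $n_0$-complement of itself. Only \emph{then} is the Diophantine step applied (Lemma~\ref{lem:rational direction}), simultaneously approximating $\bm{v}_0$ by $\bm{v}$ and the weights $a_i$ by rationals $a_i'$ with $p n_0\mid n$ and $na_i'\in n_0\Zz$. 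The re-weighted sum $\sum_i a_i'(K_X+\tilde B_i+\tilde M_i)$ is then automatically an $n$-complement, with no further appeal to a complement theorem. The point is that the ``complement-producing'' integer $n_0$ is fixed \emph{before} the Diophantine approximation, so the latter can be made divisible by it.

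Finally, for the monotonicity addendum: $\bm{e}$ is given, not chosen, so you cannot pick it in the ``cone of upward perturbations''. The monotonicity in the paper comes from the last clause of Lemma~\ref{lem:rational direction} together with \cite[Lemma 6.3]{HLS19}, using the hypothesis $\Span_{\Qq_{\ge0}}(\bar\Ii\setminus\Qq)\cap(\Qq\setminus\{0\})=\emptyset$ on $\Ii$ itself, not the direction $\bm{e}$.
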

     When $M'=0$, $-(K_X+B)$ is nef over $Z$, and we remove the Diophantine approximation properties (2)--(5) in Theorem \ref{thm: existence of n complement for generalized pairs}, it was proved by Shokurov \cite{Sho00} for the case when $\dim X=2$ and $\Ii$ is the standard set, by Prokhorov and Shokurov \cite{PS09} for the case when $\dim X=3$ and $\Ii\subseteq\Qq$ is a hyperstandard set, by Birkar \cite[Theorem1.7, Theorem 1.8]{Bir19} for the case when $\Ii\subseteq\Qq$ is a hyperstandard set. When $M'=0$, it was proved by Han, Liu and Shokurov \cite{HLS19}. We refer readers to \cite{FM18} for the case when $\bar{\Ii}\subseteq[0,1]\cap\Qq$ without additional Diophantine approximation properties.
     
     \medskip

     In order to show Theorem \ref{thm: existence of n complement for generalized pairs}, we study a new class of complements, namely $(n,\Ii_0)$-decomposable $\Rr$-complements. Note that when $M'=0$, $(n,\Ii_0)$-decomposable $\Rr$-complements are the same as \cite[Definition 1.9]{HLS19}.

\begin{defn}
     Let $n$ be a positive integer, $\Ii_0\subseteq(0,1]$ a finite set and $(X,B+M)$ a generalized pair with data $X'\xto X\to Z$ and $M'=\sum \mu_jM_j'$. We say that $(X/Z\ni z,B^++M^+)$ is an \emph{$(n,\Ii_0)$-decomposable $\Rr$-complement} of $(X/Z\ni z,B+M)$ if
     \begin{enumerate}
        \item $(X/Z\ni z,B^++M^+)$ is an $\Rr$-complement of $(X/Z\ni z,B+M)$,
        \item $K_X+B^++M^+=\sum a_i(K_X+B_i^++M^+_i)$ for some boundaries $B_i^+$, nef parts $M^{+'}_i$ and $a_i\in\Ii_0$ with $\sum a_i=1$, and
        \item $(X/Z\ni z,B_i^++M^+_i)$ is an $n$-complement of itself for any $i$.
     \end{enumerate}
\end{defn}

     We will show the existence of $(n,\Ii_0)$-decomposable $\Rr$-complements for $\Rr$-complementary generalized pairs with DCC coefficients which is an important step in the proof of Theorem \ref{thm: existence of n complement for generalized pairs}.

\begin{thm}\label{thm: (n,I)complforfiniterat}
     	Let $d$ be a positive integer, and $\Ii\subseteq [0,1]$ a DCC set. Then there exists a positive integer $n$ and a finite set $\Ii_0\subseteq(0,1]$ depending only on $d$ and $\Ii$ satisfying the following. 
	
	Assume that $(X,B+M)$ is a generalized pair with data $X'\xto X\to Z$ and $M'=\sum \mu_jM_j$ such that
	\begin{itemize}
		\item $(X,B+M)$ is g-lc of dimension $d$,	
		\item $X$ is of Fano type over $Z$,
		\item $B\in\Ii,\mu_j\in\Gamma$ and $M_j'$ are b-Cartier nef$/Z$, and
		\item $(X/Z,B+M)$ is $\Rr$-complementary.
	\end{itemize}
	Then for any point $z\in Z$, there is an $(n,\Ii_0)$-decomposable $\Rr$-complement $(X/Z\ni z,B^++M^+)$ of $(X/Z\ni z,B+M)$. Moreover, if $\bar{\Ii}\subseteq\Qq$, then we may pick $\Ii_0=\{1\}$, and $(X/Z\ni z,B^++M^+)$ is a monotonic $n$-complement of $(X/Z\ni z,B+M)$.
\end{thm}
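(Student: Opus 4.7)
The strategy is to mimic the proof for usual pairs in \cite{HLS19}, reducing the statement to a known rational case of $n$-complements for generalized pairs via a uniform convex decomposition. I would first treat the case $\bar{\Ii} \subseteq \Qq$ separately. Here the statement is a rational DCC version of the main $n$-complement theorem for g-pairs in \cite{Bir19}: by the standard trick of controlling accumulation points of $\Ii$ and then bounding coefficients inside a hyperstandard set (as in the usual-pair argument of \cite{HLS19}), one reduces to Birkar's hyperstandard $n$-complement theorem for generalized pairs. In this case one may take $\Ii_0 = \{1\}$ and obtains a genuine monotonic $n$-complement, which covers the ``moreover'' clause.

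For the general real DCC case, the core is a uniform convex decomposition of the coefficient data. I would produce a finite set $\Ii_0 \subseteq (0,1]$, a DCC set $\Ii' \subseteq [0,1] \cap \Qq$ and an integer $q \ge 1$, all depending only on $d$ and $\Ii$, together with a decomposition
\[
K_X + B + M \;=\; \sum_{i=1}^{k} a_i\,(K_X + B_i + M_i)
\]
with $a_i \in \Ii_0$, $\sum_i a_i = 1$, coefficients of each $B_i$ in $\Ii'$, and $M_i = \sum_j \mu_{i,j} M_j'$ with $q\mu_{i,j} \in \Zz_{\ge 0}$. The decomposition is built by working inside the rational envelope of the combined coefficient vector $(b_1,\dots,b_t,\mu_1,\dots,\mu_s)$ formed from the coefficients of $B$ and the multiplicities of $M'$; the DCC structure of $\Ii$ controls the finitely many combinatorial types of rational envelope that can arise. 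By convexity of log discrepancies and by the openness of the $\Rr$-complementary locus on a Fano-type variety, each slice $(X/Z, B_i + M_i)$ inherits g-lc-ness and $\Rr$-complementariness, and so is a g-lc $\Rr$-complementary generalized pair on the same Fano-type variety with coefficients in the rational DCC set $\Ii'$ and rational b-Cartier nef data.

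Applying the rational case, each $(X/Z \ni z, B_i + M_i)$ admits a monotonic $n$-complement $(X/Z \ni z, B_i^+ + M_i^+)$ for a single integer $n$ depending only on $d, \Ii', \Ii_0$ and $q$ --- hence only on $d$ and $\Ii$. Setting $B^+ := \sum_i a_i B_i^+$ and $M^+ := \sum_i a_i M_i^+$ produces the required $(n, \Ii_0)$-decomposable $\Rr$-complement of $(X/Z \ni z, B+M)$. The main obstacle is the decomposition step itself: it must be uniform across all admissible $(X, B+M)$ and compatible with the given expression $M' = \sum_j \mu_j M_j'$, so that each $M_i$ is still assembled from the same b-Cartier nef components $M_j'$. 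This forces the rational envelope formalism to be applied simultaneously to the coefficients of $B$ and to the multiplicities $\mu_j$, which is the genuinely new ingredient beyond the usual-pair case of \cite{HLS19}.
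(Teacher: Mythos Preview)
Your overall architecture matches the paper's: reduce to rational coefficients, use a uniform convex decomposition inside a rational polytope, apply Birkar's $n$-complement theorem to each rational vertex pair, and recombine. The paper carries this out via Theorem~\ref{thm: dcc limit lc complementary} (DCC $\to$ finite), Theorem~\ref{thm: uniformpolytopeforRcomp} (Han-type polytope giving the decomposition with each $(X/Z,B_i+M_{(i)})$ $\Rr$-complementary), an MMP to make $-(K_X+B_i+M_{(i)})$ nef, and then Theorem~\ref{thm: relncomplforfiniterat}.

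There is one genuine gap. Your sentence ``the DCC structure of $\Ii$ controls the finitely many combinatorial types of rational envelope that can arise'' is not correct as stated: a DCC subset of $[0,1]$ can contain infinitely many elements that are linearly independent over $\Qq$, so the rational envelope of the coefficient vector $(b_1,\dots,b_t,\mu_1,\dots,\mu_s)$ can have arbitrarily large dimension and there is no a priori finiteness of types. The paper does not try to bound envelope types directly; instead it first \emph{projects} the coefficients of $B$ and the $\mu_j$ monotonically upward into a fixed finite set $\Ii'\subseteq\bar\Ii$ while preserving $\Rr$-complementariness (Theorem~\ref{thm: dcc limit lc complementary}, proved using ACC for g-lc thresholds and Global ACC). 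Only after this reduction does the rational-envelope/polytope argument apply uniformly. You should insert this projection step before invoking the rational envelope; without it the decomposition is not uniform in $(X,B+M)$.

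A smaller omission: Birkar's theorem (and its relative version, Theorem~\ref{thm: relncomplforfiniterat}) assumes $-(K_X+B_i+M_{(i)})$ is nef over $Z$, not merely that the pair is $\Rr$-complementary. The paper runs an MMP$/Z$ on $-(K_X+B_i+M_{(i)})$ to a minimal model $Y_i$, applies the theorem there, and pulls the complement back via Lemma~\ref{lem: pullbackcomplements}. You should add this step explicitly.
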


     As one of the main ingredients in the proof of Theorem \ref{thm: existence of n complement for generalized pairs}, we will show the existence of Nakamura's (generalized) lc rational polytopes.
     
\begin{thm}[Nakamura's (generalized) lc rational polytopes]\label{thm: uniformpolytopeforgenlc}
	Let $d,m$ and $l$ be positive integers, $\bm{v}_0:=(v_1^0,\ldots,v_{m+l}^0)\in\Rr^{m+l}$ a point and $V\subseteq\Rr^{m+l}$ the rational envelope of $\bm{v}_0$. Then there exists an open set $U\ni\bm{v}_0$ of $V$ depending only on $d,m$ and $\bm{v}_0$ satisfying the following.  
	
	Assume that $(X,(\sum_{i=1}^m v_i^0B_{i})+(\sum_{j=1}^{l}v_{m+j}^0M_j))$ is a generalized pair with data $X'\xto X\to Z$ and $\sum_{j=1}^{l}v_{m+j}^0M_j'$ such that
	\begin{enumerate} 
	   \item $(X/Z,(\sum_{i=1}^m v_i^0B_{i})+(\sum_{j=1}^{l}v_{m+j}^0M_i))$ is g-lc of dimension $d$,
	   \item $B_1,\ldots,B_m\ge0$ are Weil divisors on $X$, and
	   \item $M_j'$ are b-Cartier nef$/Z$ for any $1\le j\le l$.
	\end{enumerate}
	Then $(X,(\sum_{i=1}^m v_iB)+(\sum_{j=1}^{l}v_{m+j}M_j))$ is g-lc for any point $(v_1,\dots,v_{m+l})\in U$.
\end{thm}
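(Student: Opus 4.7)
The plan is to combine the $\mathbb{Q}$-affine dependence of generalized log discrepancies on the coefficient vector $\bm v$ with the rational-envelope hypothesis (to show, for each fixed pair, that the g-lc locus in $V$ is open around $\bm v_0$), and then to invoke an ACC input for generalized log canonical thresholds to make the neighborhood uniform. To set up, fix a generalized pair as in the hypotheses and pick a g-log resolution $h\colon Y\to X$, factoring through $X'$, that simultaneously resolves $X$, each $B_i$, and the Cartier descents of the $M_j'$. For every prime divisor $F$ on $Y$, the coefficient $b_F(\bm v)$ of $F$ in $h^*(K_X+\sum_i v_iB_i+\sum_j v_{m+j}M_j)-K_Y$ is a $\mathbb{Q}$-affine function of $\bm v\in\mathbb{R}^{m+l}$, since all contributing pullback multiplicities are rational; and the pair at $\bm v$ is g-lc if and only if $b_F(\bm v)\le 1$ for every such $F$.

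The key observation is the following rational-envelope trick: every $\mathbb{Q}$-affine function on $\mathbb{R}^{m+l}$ vanishing at $\bm v_0$ must vanish throughout $V$, because its rational zero locus is a rational affine subspace containing $\bm v_0$ and $V$ is the smallest such. Applied to $b_F-1$: whenever $b_F(\bm v_0)=1$, we have $b_F\equiv 1$ on $V$, so such $F$ never violates g-lc anywhere on $V$. Hence the g-lc locus of the fixed pair, restricted to $V$, is cut out by the finitely many strict inequalities $b_F<1$ with $b_F(\bm v_0)<1$, which is an open rational polytope around $\bm v_0$. This settles the statement qualitatively for each individual pair and reduces the task to uniformizing the size of the neighborhood.

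For uniformization, I would argue by contradiction. If no $U$ depending only on $d,m,\bm v_0$ works, pick generalized pairs $(X_k,B_k+M_k)$ meeting the hypotheses together with points $\bm v_k\in V$, $\bm v_k\to\bm v_0$, such that $(X_k,B_k(\bm v_k)+M_k(\bm v_k))$ is not g-lc. Let $t_k\in(0,1)$ be the largest $t$ for which $\bm w_k:=(1-t)\bm v_0+t\bm v_k$ gives a g-lc pair; by the previous paragraph $t_k>0$, and some $F_k$ satisfies $b_{F_k}(\bm w_k)=1$ with $b_{F_k}(\bm v_0)<1$. Writing $\hat{\bm e}_k:=(\bm v_k-\bm v_0)/\|\bm v_k-\bm v_0\|$ and $s_k:=t_k\|\bm v_k-\bm v_0\|$, we have $\bm w_k=\bm v_0+s_k\hat{\bm e}_k$ and $s_k\le\|\bm v_k-\bm v_0\|\to 0$. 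But $s_k$ is precisely the generalized log canonical threshold of $(X_k,B_k(\bm v_0)+M_k(\bm v_0))$ in the direction $\hat{\bm e}_k$, perturbing the boundary and the b-nef part simultaneously. After a subsequence with $\hat{\bm e}_k\to\hat{\bm e}$ (so that the relevant coefficients lie in a fixed DCC set built from coordinates of $\bm v_0$ and the limit $\hat{\bm e}$), an ACC for generalized log canonical thresholds in dimension $d$ with DCC coefficients, in the style of Han--Liu--Shokurov, forces $\{s_k\}$ to be DCC, contradicting $s_k\to 0$. The main obstacle is this last step: setting up the ACC cleanly when the perturbation varies both the boundary and the b-nef part jointly; once that is in hand, the purely combinatorial content of the rational-envelope argument does the rest.
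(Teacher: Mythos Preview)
Your rational-envelope observation (that any $\Qq$-affine function vanishing at $\bm v_0$ vanishes on all of $V$, so divisors with $b_F(\bm v_0)=1$ stay at $1$ throughout $V$) is correct and is exactly what underlies the paper's use of $\Qq$-linear independence. The gap is in the uniformization step, and it is more serious than the ``boundary and b-nef simultaneously'' issue you flag.

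First, your $s_k$ is not a generalized log canonical threshold in the sense of Theorem~\ref{thm: ACCforglcts}: that result requires adding an \emph{effective} divisor $D$ and a \emph{nef} b-divisor $N$, whereas your unit direction $\hat{\bm e}_k\in V$ typically has coordinates of both signs, so the perturbation raises some coefficients and lowers others. No ACC statement in \cite{BZ16} or \cite{HLS19} controls such signed thresholds directly. Second, even granting a signed ACC, you would not get a contradiction: an ACC set can accumulate at $0$ from above, so $s_k\searrow 0$ is compatible with $\{s_k\}$ lying in an ACC set. Your claim that ACC ``forces $\{s_k\}$ to be DCC'' is the wrong direction. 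What is actually needed is a \emph{rationality} constraint on the accumulation point, not merely ACC.

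This is precisely why the paper proceeds via the signed-slope sets $\mathfrak L_d(\Ii)$ and $\mathfrak N_d(\Ii)$ (which allow $k\in\Zz$ of either sign in the linear functionals) and proves, through the inductive Theorems~N and~P, that their accumulation points lie in $\Span_{\Qq}(\Ii\cup\{1\})$ (Theorem~\ref{thm:accumulationpoints1}). In Theorem~\ref{thm: uniformperturbationofglc} one then perturbs a single irrational coordinate $r_c$ along a fixed rational direction; the limit of the thresholds would have to lie in $\Span_\Qq(r_0,\ldots,r_{c-1})$, contradicting the linear independence of $r_c$. Theorem~\ref{thm: uniformpolytopeforgenlc} follows by iterating over a basis of $V$. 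Your sketch would need a substitute for this rationality-of-accumulation-points input before it can close.
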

     To show Theorem \ref{thm: uniformpolytopeforgenlc}, we generalized a result of Nakamura \cite[Theorem 1.6]{Nak16}, which is about perturbation of an irrational coefficient of g-lc pairs, see Theorem \ref{thm: uniformperturbationofglc}. The proof of Theorem \ref{thm: uniformperturbationofglc} is a combination of \cite{HMX14}, \cite{Nak16} and \cite{HLQ17}. It is worthwhile to point out that, we use Birkar-Borisov-Alexeev-Borisov Theorem to simplify the proof of Nakamura. Moreover, by using Theorem \ref{thm: uniformperturbationofglc}, we can show the existence of Han type polytopes for $\Rr$-complementary generalized pairs of Fano type over the base (See Theorem \ref{thm: uniformpolytopeforRcomp}). 

\medskip

\textit{Structure of the paper.}
     We outline the organization of the paper. In Section \ref{section2}, we introduce some notation and tools which will be used in this paper, and prove certain basic results. In Section \ref{section3}, we prove Theorem \ref{thm: uniformpolytopeforgenlc}. In Section \ref{section4'}, we show Theorem \ref{thm: relncomplforfiniterat} which is a generalized of \cite[Theorem 1.10]{Bir19}. In Section \ref{section4}, we prove Theorem \ref{thm: (n,I)complforfiniterat}. In Section \ref{section5}, we prove Theorem \ref{thm: existence of n complement for generalized pairs}.

\medskip

\textbf{Acknowledgements.} This work began when the author visited Jingjun Han at Johns Hopkins University in April of 2019, and the author would like to thank Jingjun Han for suggesting him the problem and also for useful discussions. Part of this work was done while the author visited the MIT Mathematics Department during 2018--2020 supported by China Scholarship Council (File No. 201806010039). The author would like to thank their hospitality. The author would like to thank his advisor Chenyang Xu for constant support and encouragement, and Vyacheslav V. Shokurov for teaching him the theory of complements. The author would also like to thank Jihao Liu for his comments.

\section{Preliminaries}\label{section2}
     In this section, we will collect some definitions and preliminary results which will be used in this paper.

\subsection{Arithmetic of sets}
Let $\Ii\subseteq[0,+\infty)$ be a set, we define 
     $$\sum \Ii:=\{0\}\cup\{\sum_{p=1}^l i_p\mid i_p\in\Ii \text{ for }1\le p\le l,l\in\Zz_{>0}\}.$$
     If $\Ii\subseteq[0,1]$, then we define
     $$\Phi(\Ii):=\{1-\frac{r}{m}\mid r\in\Ii,m\in\Zz_{>0}\}$$
     to be the set of \emph{hyperstandard multiplicities} associated to $\Ii$ (c.f. \cite[3.2]{PS09}). Note that if we add $1-r$ to $\Ii$ for any $r\in\Ii$, then we get $\Ii\subseteq\Phi(\Ii)$.
     
\begin{defn}[DCC and ACC sets]\label{def: DCC and ACC}        
    We say that $\Ii\subseteq\Rr$ satisfies the \emph{descending chain condition} (DCC) if any decreasing sequence $a_{1} \ge a_{2} \ge \cdots$ in $\Ii$ stabilizes. We say that $\Ii$ satisfies the \emph{ascending chain condition} (ACC) if any increasing sequence in $\Ii$ stabilizes.
\end{defn}

     Now assume that $\Ii\subseteq[0,1]\cap\Qq$ is a finite set. Then $\Phi(\Ii)$ is a DCC set of rational numbers whose only accumulation point is $1$.

\subsection{Divisors}
     We adopt the standard notation and definitions in \cite{BZ16} and \cite{Bir19}, and will freely use them.
     
     Let $X$ be a normal variety, $D:=\sum d_{i} D_{i}$ an $\Rr$-divisor and $a$ a real number. We define $\lfloor D\rfloor:=\sum \lfloor d_i\rfloor D_i, \{D\}:=\sum \{d_i\}D_i$, $D^{\ge a}:=\sum_{d_i\ge a}d_iD_i$, and $\lceil D\rceil:=\sum \lceil d_i\rceil D_i$.
     

\begin{defn}[b-divisors]
     Let $X$ be a variety. A \emph{b-$\Rr$-Cartier b-divisor} over $X$ is the choice of a projective birational morphism $Y\to X$ from a normal variety $Y$ and an $\Rr$-Cartier divisor $M$ on $Y$ up to the following equivalence: another projective birational morphism $Y '\to  X$ from a normal variety $Y'$ and an $\Rr$-Cartier divisor $M'$ defines the same b-$\Rr$-Cartier b-divisor if there is a common resolution $W\to Y$ and $W\to Y'$ on which the pullbacks of $M$ and $M'$ coincide.
     
     A b-$\Rr$-Cartier b-divisor represented by some $Y\to X$ and $M$ is \emph{b-Cartier} if $M$ is b-Cartier, i.e., its pullback to some resolution is Cartier.
\end{defn}

\subsection{Generalized pairs}
\begin{defn}   
     We say $\pi: X \to Z$ is a \emph{contraction} if 
     $X$ and $Z$ are normal quasi-projective varieties, $\pi$ is a projective morphism, and $\pi_*\Oo_X = \Oo_Z$ $(\pi$ is not necessarily birational$)$. 
\end{defn}

\begin{defn}
     Let $X\to Z$ be a contraction. We say that $X$ is of \emph{Fano type} over $Z$ if $(X,B)$ is klt and $-(K_{X}+B)$ is big and nef over $Z$ for some boundary $B$.
\end{defn}

\begin{rem}
     Assume that $X$ is of Fano type over $Z$. Then we can run the MMP$/Z$ on any $\Rr$-Cartier $\Rr$-divisor $D$ on $X$ which terminates with some model $Y$ (c.f. \cite[Corollary 2.9]{PS09}). 
\end{rem}

\begin{defn}[generalized pairs] 
     A \emph{generalized pair} consists of 
     \begin{itemize}
          \item a normal variety $X$ equipped with a projective morphism $X\to Z$,
          \item an $\Rr$-divisor $B\ge0$ on $X$, and
          \item a b-$\Rr$-Cartier b-divisor over $X$ represented by some projective birational morphism $X'\xto X$ and $\Rr$-Cartier divisor $M'$ on $X'$, 
     \end{itemize}
     such that $M'$ is nef $/Z$, and $K_X+B+M$ is $\Rr$-Cartier, where $M:=f_{*}M'$. We call $B$ the \emph{boundary part} and $M$ the \emph{nef part}.
      In the rest of the paper, we may say that $M$ is the strict tranform of $M'$.
     
          We may say that $(X,B+M)$ is a generalized pair with data $X'\xto X\to Z$ and $M'$. 
          If $\dim Z=0$, the generalized pair is called \emph{projective}, and we will omit $Z$. If $Z=X$ and $X\to Z$ is the identity map, we will omit $Z$.
          Since a b-$\Rr$-Cartier b-divisor is defined birationally, we will often replace $X'$ by a higher model and replace $M'$ by its pullback.
          
          Possibly replacing $X$ by a higher model and $M'$ by its pullback, we may assume that $f$ is a log resolution of $(X,B)$, and write
          $$K_{X'}+B'+M'=f^*(K_X+B+M)$$
          for some uniquely determined $B'$. The \emph{generalized log discrepancy} of a divisor $E$ on $X'$ with respect to $(X,B+M)$ is $1-\mult_E B'$ and denoted by $a(E,X,B+M)$. We define the \emph{generalized minimal log discrepancy} of $(X,B+M)$ as
          $$\mld(X,B+M):=\min\{a(E,X,B+M)\mid E\text{ is a prime divisor over }X\}.$$
          
          We say that $(X,B+M)$ is \emph{generalized $\epsilon$-lc} $($respectively \emph{g-klt, g-lc}$)$ for some non-negative real number $\epsilon$ if $\mld(X,B+M)\ge \epsilon$ $($respectively $>0,\ge0)$. For a divisor $E$ over $X$ with $a(E,X,B+M)\le0$, we call $E$ a \emph{generalized nonklt place} and its image on $X$ a \emph{generalized nonklt center}. 
          
          A g-lc pair $(X,B+M)$ with data $X'\xto X\to Z$ and $M'$ is \emph{g-dlt} if $(X,B)$ is a dlt pair and if every generalized nonklt center of $(X,B+M)$ is a nonklt center of $(X,B)$. If in addition, the connected components of $\lf B\rf$ are irreducible, then we say that the generalized pair is \emph{generalized plt}.
\end{defn}

We recall an adjunction formula for generalized pairs.

\begin{defn}[Generalized adjunction fomula]
     Let $(X,B+M)$ be a generalized pair with data $X'\xto X\to Z$ and $M'$. Assume that $S$ is the normalization of a component of $\lfloor B\rfloor$, and that $S'$ is its strict transform on $X'$. Possibly replacing $X'$ by a higher model, we may assume that $f$ is a log resolution of $(X,B)$ and write
     $$ K_{X'}+B'+M'=f^*(K_{X}+B+M). $$
     Then
     $$K_{S'}+B_{S'}+M_{S'}'=f^*(K_{X}+B+M)|_S,$$  
     where $B_{S'}:=(B'-S')|_{S'}$ and $M_{S'}':=M'|_{S'}$. Let $g:=f|_{S'}$ be the induced morphism, $B_{S}:=g_*B_{S'}$ and $M_{S}:=g_*M_{S'}'.$ Then we get 
     $$K_{S}+B_{S}+M_{S}=(K_X+B+M)|_S,$$
     and $(S,B_S+M_S)$ is a generalized pair with data $S'\stackrel{g}\to S$ and $M_{S'}'$, which is referred as the \emph{generalized adjunction formula}.
\end{defn}

\begin{defn}[g-lc thresholds]\label{defn: glcts}
     Let $(X,B+M)$ be a g-lc pair with data $X'\xto X\to Z$ and $M'$. Assume that $D\ge0$ is an $\Rr$-divisor on $X$ and $N'$ is a nef $/Z$ $\Rr$-divisor on $X'$, such that $D+N$ is $\Rr$-Cartier, where $N'=f_*N$. The \emph{g-lc threshold of $D+N$ with respect to $(X,B+M)$} is defined as
     $$\lct(X,B+M;D+N):=\sup\{t\mid(X,(B+tD)+(M+tN))\text{ is g-lc}\}.$$
\end{defn}

We also need the following results.

\begin{thm}[{ACC for g-lc thresholds, \cite[Theorem 1.5]{BZ16}}]\label{thm: ACCforglcts}
     Let $d$ be a positive integer and $\Ii$ a DCC set of non-negative real numbers. Then there exists an $ACC$ set $g\mathcal{LCT}(d,\Ii)$ depending only on $d$ and $\Ii$ satisfying the following.
     
     Assume that $(X, B+M),M',N'$ and $D$ are as in Definition \ref{defn: glcts} such that
     \begin{enumerate}
          \item $(X, B+M)$ is g-lc of dimension $d$,
          \item $B,D\in\Ii$,
          \item $M'=\sum \mu_jM_j'$ where $M'_j$ are nef$/Z$ Cartier divisors and $\mu_j\in \Ii$, and
          \item $N'=\sum \nu_kN_k'$ where $N_k'$ are nef$/Z$ Cartier divisors and $\nu_j\in \Ii$.
      \end{enumerate}
      Then $\lct(X,B+M;D+N)\in g\mathcal{LCT}(d,\Ii)$.
\end{thm}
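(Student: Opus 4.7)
The plan is to argue by contradiction. Suppose there exist generalized pairs $(X_i,B_i+M_i)$ with data $X_i'\to X_i\to Z_i$ and $M_i'=\sum\mu_{ij}M'_{ij}$, together with $D_i$ and $N_i'=\sum\nu_{ik}N'_{ik}$, satisfying the hypotheses, whose g-lc thresholds $t_i:=\lct(X_i,B_i+M_i;D_i+N_i)$ form a strictly increasing sequence with limit $t_\infty\in(0,+\infty]$. After passing to a subsequence, I may assume $d$ is fixed and all discrete combinatorial data (number of nef components, etc.) stabilize. By definition of g-lct, each generalized pair $(X_i,(B_i+t_iD_i)+(M_i+t_iN_i))$ is g-lc but not g-klt, so there is a prime divisor $E_i$ over $X_i$ with $a(E_i,X_i,(B_i+t_iD_i)+(M_i+t_iN_i))=0$.

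The first main step is to pass to a generalized dlt modification on which $E_i$ appears as a divisor with boundary coefficient $1$; existence of such a model for g-lc pairs is by now standard. Then I would run a suitable relative MMP (for instance a $(K_{Y_i}+B_{Y_i}-\varepsilon E_i+M_{Y_i})$-MMP over $Z_i$) to contract the remaining generalized non-klt places while keeping $E_i$, arriving at a model on which $E_i$ sits in a controlled position, ideally as (or dominating) the central fiber of a relative Mori-fiber-type structure.

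The second main step is generalized adjunction to $E_i$, which produces a generalized pair $(E_i,B_{E_i}+M_{E_i})$ of dimension $d-1$. By the generalized different theory, the coefficients of $B_{E_i}$ and the nef-part scalars of $M_{E_i}$ lie in a DCC set determined only by $\Ii$ and $d$. Crucially, at least one coefficient appearing in $B_{E_i}$ (or one of the nef scalars of the form $\mu_{ij}+t_i\nu_{ik}$ showing up in $M_{E_i}$) has the form $(\text{element of a controlled DCC set})+t_i\cdot(\text{positive contribution from }D_i \text{ or } N_i)$, so it moves strictly monotonically with $t_i$. Induction on $d$, combined with the DCC property of the adjunction coefficient set, then yields infinitely many distinct values inside a DCC set, a contradiction; the base case $d=1$ reduces to the classical ACC for lct on curves together with a direct analysis of the nef contribution.

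The principal obstacle is controlling the combinatorics of generalized adjunction when the nef part is nontrivial: one must verify that restrictions of the nef b-divisors $M'_{ij}$ and $N'_{ik}$ to $E_i$ remain non-negative integer combinations of nef Cartier b-divisors with scalars in $\Ii$, and that the boundary different of $B_i+t_iD_i$ stays in a DCC set depending only on $\Ii$ and $d$ — uniformly in $i$. This bookkeeping through the MMP and through adjunction is the main technical core, and is where one invokes the generalized MMP and generalized adjunction machinery for g-lc pairs developed in the cited Birkar--Zhang framework.
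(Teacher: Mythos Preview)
The paper does not give its own proof of this statement: Theorem~\ref{thm: ACCforglcts} is quoted verbatim from \cite[Theorem 1.5]{BZ16} as a preliminary result, so there is no in-paper proof to compare your proposal against.

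Regarding your sketch itself, the overall architecture (argue by contradiction, extract a g-lc place on a g-dlt modification, run an MMP, reduce dimension by adjunction or restriction) is indeed the strategy of \cite{BZ16}. But there is a genuine gap in how you close the argument. You write that induction on $d$ ``yields infinitely many distinct values inside a DCC set, a contradiction''. That is not a contradiction: a DCC set can perfectly well contain a strictly increasing infinite sequence. What actually produces the contradiction in \cite{BZ16} is the \emph{Global ACC} (Theorem~\ref{thm: globalACC} here, \cite[Theorem 1.6]{BZ16}), which you never invoke. In \cite{BZ16} the two theorems are proved by simultaneous induction on $d$: after the MMP one arranges that $K+B+t_iD+M+t_iN$ is numerically trivial on a variety of dimension $<d$ (a general fibre of a Mori fibre space, or via adjunction when the Picard number is one), and then Global ACC in lower dimension forces the coefficients---among them one of the form $b+t_i d$ or $\mu+t_i\nu$ which is strictly increasing in $i$---to lie in a \emph{finite} set. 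That is the contradiction.

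A second, smaller gap: your MMP step is only described as reaching a model where $E_i$ ``sits in a controlled position''. For the argument to go through you must actually arrange numerical triviality of the log canonical divisor along whatever you restrict to; merely having $E_i$ present is not enough for adjunction alone to produce a numerically trivial pair on $E_i$, and without numerical triviality Global ACC does not apply.
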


\begin{thm}[{Global ACC for generalized pairs, \cite[Theorem 1.6]{BZ16}}]\label{thm: globalACC}
      Let $d$ be a positive integer and $\Ii$ a DCC set of non-negative real numbers. Then there exists a finite subset $\Ii_0 \subseteq \Ii$ depending only on $d$ and $\Ii$ satisfying the following.
      
       Assume that $(X, B+M)$ is a projective generalized pair with data $X'\xto X$ and $M'$ such that
      \begin{enumerate}
          \item  $(X, B+M)$ is g-lc of dimension $d$,
          \item $B\in\Ii$,
          \item  $M'=\sum\mu_jM_j'$ where $M_j'$ are nef Cartier divisors and $\mu_j\in\Ii$,
          \item  $\mu_j=0$ if $M_j'\equiv 0,$ and
          \item  $K_{X}+B+M\equiv 0.$
     \end{enumerate}
     Then $B\in\Ii_0$ and $\mu_j\in\Ii_0$ for any $j$.
\end{thm}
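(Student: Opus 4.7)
The plan is to argue by contradiction and induction on the dimension $d$. Suppose the conclusion fails; then there exists a sequence of projective g-lc generalized pairs $(X_i, B_i + M_i)$ satisfying every hypothesis, but for which some coefficient --- either of a prime component of $B_i$ or some $\mu_j^i$ --- forms a strictly increasing sequence in $\Ii$. By the DCC property, such a sequence converges to some $c \in \Ii$. The base case $d = 1$ reduces to a finite computation, since on a projective curve the relation $\deg(K + B + M) = 0$ together with the non-negativity of each term and $M_j' \not\equiv 0$ (whenever $\mu_j \neq 0$) puts strong numerical constraints on the coefficients.

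For the inductive step, I would first replace each $(X_i, B_i + M_i)$ by a $\Qq$-factorial g-dlt modification, which preserves $K_X + B + M \equiv 0$ together with the coefficients in question. Then split into two cases. If $\lf B_i \rf \neq 0$ for infinitely many $i$, pick a component $S_i \subseteq \lf B_i \rf$ and apply generalized adjunction to obtain a projective g-lc generalized pair $(S_i, B_{S_i} + M_{S_i})$ of dimension $d-1$ with $K_{S_i} + B_{S_i} + M_{S_i} \equiv 0$. The explicit generalized adjunction formula guarantees that the new boundary and nef-part coefficients lie in a DCC set depending only on $\Ii$ and $d$, while the original coefficients $b_k^i$ and $\mu_j^i$ are recoverable from the new data together with certain bounded positive integer multiplicities. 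The inductive hypothesis applied to $(S_i, B_{S_i} + M_{S_i})$ forces the new coefficients into a finite set, hence the originals also, contradicting strict monotonicity.

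In the remaining case $\lf B_i \rf = 0$ for all large $i$, so the pairs are g-klt, I would run a $K_{X_i}$-MMP (after a small perturbation of the boundary, possible since the pairs are g-klt) to reach a Mori fiber space $Y_i \to Z_i$. Restriction to a general fiber $F_i$, combined with generalized adjunction, produces a projective g-lc generalized pair of strictly smaller dimension satisfying all hypotheses with $K + B_{F_i} + M_{F_i} \equiv 0$, and the coefficients of interest still appear in $\Ii$; the inductive hypothesis then yields a contradiction. An alternative route in this case is to compute the generalized lc threshold of a carefully chosen effective divisor with respect to $(X_i, (B_i - b_k^i D_i) + M_i)$ and invoke Theorem 2.4 directly, since the strict increase of $b_k^i$ would contradict the ACC property of thresholds; a parallel calculation with a general member of $|k M_j^{i'}|$ for suitable $k$ handles the nef-part coefficients $\mu_j^i$.

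The hardest technical obstacles are the existence of $\Qq$-factorial g-dlt modifications in full generality for g-lc generalized pairs, the verification via the precise adjunction formulas that the DCC set produced by restriction to $S_i$ depends only on $\Ii$ and $d$ so that the induction is legitimately applicable, and the control of nef parts through the MMP in the g-klt case. This last point is particularly delicate: one must ensure that steps of the MMP preserve the nefness of each $M_j^{i'}$ and do not destroy the faithful recoverability of the coefficients $\mu_j^i$ from numerical data on the fiber, which is the place where the hypothesis $\mu_j = 0$ whenever $M_j' \equiv 0$ is essential in order to rule out spurious collapses.
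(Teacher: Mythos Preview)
The paper does not prove this statement. Theorem~\ref{thm: globalACC} appears in the Preliminaries section as a quoted result from Birkar--Zhang \cite[Theorem~1.6]{BZ16}, and is used throughout as a black box (for instance in the proofs of Theorem~N, Proposition~\ref{prop: nimpliesp}, and Theorem~\ref{thm: dcc limit lc complementary}). There is therefore no ``paper's own proof'' to compare your proposal against.

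As for the sketch itself: the broad strategy matches the philosophy of \cite{BZ16}, but it under-reports one structural point. In \cite{BZ16} the Global ACC and the ACC for generalized lc thresholds (Theorem~\ref{thm: ACCforglcts} here) are proved by a joint spiral induction: Global ACC in dimension $d$ uses ACC for thresholds in dimension $d$, not merely in lower dimension. Concretely, in your g-klt case, after reaching a Mori fiber space $Y_i\to Z_i$ the sub-case $\dim Z_i=0$ (Picard number one) does not reduce to a lower-dimensional fiber; it is handled precisely via the threshold ACC in dimension $d$. So the alternative you describe as a side-route is in fact an essential ingredient, and the two theorems cannot be decoupled as cleanly as your outline suggests. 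Your recoverability claim after adjunction is also somewhat optimistic: the coefficients on $S_i$ take the shape $(m-1+f+\sum k_j\mu_j)/m$, and extracting the original $b_k^i$ or $\mu_j^i$ from a finite set of such values requires control over the integers $m,k_j$ and the contribution $f$, which is exactly what the lower-dimensional finiteness together with DCC provides --- but this step needs to be argued, not asserted.
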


\subsection{MMP for generalized pairs} For generalized pairs, one can ask whether one can run MMP and whether it terminates. However the MMP for generalized pairs is not completed established, but some important cases could be derived from the standard MMP. We elaborate these results which are developed in \cite[\S 4]{BZ16}.

     \begin{lem}[g-dlt modification {\cite[Proposition 3.9]{HL18}}]
     Let $(X,B+M)$ be a g-lc pair with data $X'\xto X\to Z$ and $M'$. Then possibly replacing $X'$ by a higher model, there exist a $\Qq$-factorial g-dlt pair $(Y,B_Y+M_Y)$ with data $X'\stackrel{g}\to Y\to Z$ and $M'$, and a contraction $\phi:Y\to X$ such that $K_Y+B_Y+M_Y=\phi^*(K_X+B+M)$. Moreover, each exceptional divisor of $\phi$ is a component of $\lf B_Y\rf$. 
\end{lem}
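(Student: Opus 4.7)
My plan is to construct $(Y,B_Y+M_Y)$ as the output of a relative MMP starting from a log resolution of $(X,B)$, arranged so that the MMP extracts precisely the generalized log canonical places of $(X,B+M)$ and contracts everything else over $X$.

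First, replace $X'$ by a higher model so that $f\colon X'\to X$ is a log resolution of $(X,B)$ with $\Exc(f)\cup\Supp(f_*^{-1}B)$ simple normal crossings and $M'$ nef$/Z$. Write
$$K_{X'}+B'+M'=f^*(K_X+B+M),$$
and decompose $B'=f_*^{-1}B+\sum_i b_iE_i$, where the $E_i$ are the $f$-exceptional prime divisors. Since $(X,B+M)$ is g-lc, we have $b_i\in[0,1]$. Define
$$B'^{+}:=f_*^{-1}B+\sum_i E_i,$$
so that $K_{X'}+B'^{+}+M'=f^*(K_X+B+M)+G$ with $G:=\sum_i(1-b_i)E_i$ effective and $f$-exceptional. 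In particular $K_{X'}+B'^{+}+M'\sim_{\Rr,X}G$, and $\Supp G$ consists of exactly those $E_i$ with $b_i<1$ (the divisors we wish to contract).

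Next, run a $(K_{X'}+B'^{+}+M')$-MMP over $X$ with scaling of an ample divisor, using the generalized-pair MMP machinery of \cite{BZ16} and \cite{HL18}. Because $K_{X'}+B'^{+}+M'$ is $\Rr$-linearly equivalent over $X$ to the effective $f$-exceptional divisor $G$, any $(K_{X'}+B'^{+}+M')$-negative extremal ray $R/X$ must satisfy $R\cdot G<0$, so it is contained in $\Supp G$. Consequently every step of the MMP contracts a component of $\Supp G$, and the MMP terminates after finitely many steps with a birational map $g\colon X'\dashrightarrow Y$ together with an induced morphism $\phi\colon Y\to X$. Replacing $X'$ by a common resolution of $g$ if necessary, we may assume $g$ is a morphism. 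Setting $B_Y:=g_*B'^{+}$ and $M_Y:=g_*M'$, pushing forward the equality on $X'$ gives $K_Y+B_Y+M_Y=\phi^*(K_X+B+M)$, and the $\phi$-exceptional divisors on $Y$ are exactly the strict transforms of those $E_i$ with $b_i=1$, each appearing in $\lfloor B_Y\rfloor$ with coefficient one.

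It remains to verify that $(Y,B_Y+M_Y)$ is $\Qq$-factorial g-dlt. The property of being $\Qq$-factorial is preserved by each step of the MMP. For the g-dlt property, note that away from the strict transforms of the extracted divisors and of $B$, the model $Y$ is covered by the log-smooth locus of $X'\to Y$; every generalized non-klt place of $(Y,B_Y+M_Y)$ is the strict transform of some $E_i$ with $b_i=1$ or of a component of $\lfloor B\rfloor$, hence a non-klt place of $(Y,B_Y)$. The main obstacle will be Step~2: the generalized-pair MMP is not established in full generality, so one must carefully justify existence and termination. Here the key mitigating fact is that the MMP is run over $X$ and is guided by the fixed effective $f$-exceptional divisor $G$, so termination reduces to contracting the finitely many $E_i$ in $\Supp G$, and the preservation of the nef part $M'$ is automatic since no flips beyond the classical g-pair MMP framework are required in this exceptional-contracting setting.
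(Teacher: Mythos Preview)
The paper does not give its own proof of this lemma; it is simply quoted from \cite[Proposition 3.9]{HL18}. So there is no in-paper argument to compare against, and your outline follows the standard construction used in that reference.

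That said, two points in your write-up need tightening. First, the assertion $b_i\in[0,1]$ is false: g-lc only gives $b_i\le 1$, and exceptional coefficients can be negative. This does not hurt the argument since $G=\sum_i(1-b_i)E_i$ is still effective, but the stated range is wrong. Second, the sentence ``every step of the MMP contracts a component of $\Supp G$, and the MMP terminates after finitely many steps'' is not a valid termination argument: flips do not contract divisors, so the MMP need not shrink $\Supp G$ at each step. The correct reasoning is that $K_{X'}+B'^{+}+M'\sim_{\Rr,X}G\ge 0$ is pseudo-effective over $X$, so the MMP with scaling terminates by \cite[Lemma 4.4]{BZ16} (cf.\ also \cite[\S3]{HL18}); on the resulting model $Y$ the pushforward $g_*G$ is nef over $X$, effective, and $\phi$-exceptional, hence $g_*G=0$ by the negativity lemma. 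This is what forces all $E_i$ with $b_i<1$ to be contracted and yields $K_Y+B_Y+M_Y=\phi^*(K_X+B+M)$. Your g-dlt verification is essentially correct but would benefit from invoking that the MMP is an isomorphism over the generic point of each non-klt centre, which is how one checks the snc condition there.
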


     We call $(Y,B_Y+M_Y)$ a g-dlt modification of $(X,B+M)$.

     We may use the following lemma frequently without citing it in this paper.
     
\begin{lem}[{\cite[Lemma 4.4]{BZ16}, \cite[Lemma 3.5]{HL18}}]
     Let $(X,B+M)$ be a $\Qq$-factorial g-lc pair with data $X'\xto X\to Z$ and $M'$ such that $K_{X}+B+M$ is not pseudo-effective$/Z$, and either
     \begin{enumerate}
       \item $(X,B+M)$ is g-klt, or
       \item $(X,C)$ is klt for some boundary $C$.
    \end{enumerate}
    Then any g-MMP$/Z$ on $K_{X}+B+M$ with scaling of some ample$/Z$ $\Rr$-Cartier $\Rr$-divisor terminates with a Mori fiber space.
\end{lem}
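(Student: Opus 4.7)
The plan is to reduce the generalized MMP with scaling to an ordinary MMP on an auxiliary klt pair, and then invoke the standard termination results of \cite{BCHM}. Fix the ample$/Z$ $\Rr$-Cartier $\Rr$-divisor $A$ used for scaling. My first step is to produce a boundary $\Delta$ on $X$ with $(X,\Delta)$ klt and $K_X+\Delta \sim_{\Rr,Z} K_X+B+M+\delta A$ for some small $\delta>0$. Under hypothesis (1), since $(X,B+M)$ is g-klt, writing $K_{X'}+B'+M'=f^*(K_X+B+M)$ we have $\lfloor B' \rfloor \le 0$; as $M'+\delta f^*A$ is ample$/Z$, I can pick a general effective $\Rr$-divisor $P'\sim_{\Rr,Z} M'+\delta f^*A$ so that $(X',B'+P')$ remains sub-klt, and set $\Delta:=f_*(B'+P')$. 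Under hypothesis (2), I first obtain a (possibly non-klt) boundary $\Delta_0$ by the same construction, then interpolate against the klt boundary $C$: the pair $(X,(1-t)\Delta_0 + tC)$ is klt for sufficiently small $t>0$, and its log canonical divisor still differs from $K_X+B+M$ by an ample$/Z$ divisor up to $\Rr$-linear equivalence over $Z$.

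Next, I would verify that a g-MMP$/Z$ on $K_X+B+M$ with scaling of $A$ coincides step-by-step with an ordinary MMP$/Z$ on $K_X+\Delta$ with scaling of $A$ (with a suitably adjusted scaling parameter). Each extremal ray $R$ contracted in the g-MMP satisfies $(K_X+B+M)\cdot R<0$, and since $\Delta$ differs from $B+M$ by a nef$/Z$ divisor plus a small ample perturbation, the same ray satisfies $(K_X+\Delta)\cdot R<0$, so the contraction or flip is simultaneously a step for both pairs. After each step, the nef part is transported as the pushforward of $M'$ from a common higher model and remains nef$/Z$; thus the g-lc hypothesis is preserved, and one may iterate. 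In particular the standard (non-generalized) existence of flips and contractions for the klt pair $(X,\Delta)$ supplies the corresponding g-MMP steps.

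Finally, since $K_X+B+M$ is not pseudo-effective$/Z$ by hypothesis and $\Delta$ is an $\Rr$-linear perturbation of $B+M$ by an ample$/Z$ divisor plus a nef$/Z$ class, we conclude that $K_X+\Delta$ is also not pseudo-effective$/Z$ (after choosing $\delta$ small enough). The termination theorem of \cite{BCHM} then implies that any $(K_X+\Delta)$-MMP$/Z$ with scaling of an ample$/Z$ divisor terminates with a Mori fiber space $Y\to T$, on which $K_Y+\Delta_Y$ is relatively anti-ample. On the same space, $K_Y+B_Y+M_Y$ is negative on the fibers, giving a Mori fiber space for the g-MMP.

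The main obstacle is the careful bookkeeping in the second step: one must ensure the auxiliary klt pair $(X,\Delta)$ can be chosen so that its extremal rays exactly match those of the g-MMP throughout the entire process, not merely at the first step. This is where the nef$/Z$-ness of $M'$ on the common birational model $X'$ is essential, since it is preserved under the induced birational maps and hence the perturbation argument survives each flip and divisorial contraction, allowing $\Delta$ to be updated consistently on the new models.
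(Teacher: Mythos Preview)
The paper does not prove this lemma; it is quoted verbatim from \cite{BZ16} and \cite{HL18} with no argument given. Your outline is essentially the argument found in those references, so in that sense you are reproducing the ``paper's'' proof.

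One technical slip to fix: in case (1) you assert that $M'+\delta f^{*}A$ is ample over $Z$, but $f^{*}A$ is only big and nef over $Z$ (it is numerically trivial on every positive-dimensional fibre of $f$), so the sum is merely big and nef over $Z$. The standard remedy is to write $M'+\delta f^{*}A\sim_{\Rr,Z}H'+E'$ with $H'$ ample$/Z$ and $E'\ge 0$ having coefficients as small as you like, absorb $E'$ into $B'$, and then take $P'$ general in the class of $H'$. In case (2), after interpolation you have
\[
K_X+(1-t)\Delta_0+tC-(K_X+B+M)\ \sim_{\Rr,Z}\ (1-t)\delta A+t(C-B-M),
\]
which is not literally ``an ample divisor'' but is ample for $0<t\ll 1$ since $A$ is; this deserves a sentence.

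Your identification of the second step as the crux is right, and your description is correct once made precise: because $K_X+\Delta\sim_{\Rr,Z}K_X+B+M+\delta A$, the $(K_X+B+M)$-MMP with scaling of $A$ and scaling parameter $\lambda$ is \emph{exactly} the $(K_X+\Delta)$-MMP with scaling of $A$ and parameter $\lambda-\delta$; the contracted rays, flips, and divisorial contractions coincide. Since $(X,\Delta)$ is klt and $K_X+\Delta$ is not pseudo-effective$/Z$ for $\delta$ small, \cite{BCHM10} forces termination with a Mori fibre space, and the same fibre-type contraction is $(K_X+B+M)$-negative, hence a Mori fibre space for the generalized pair as well.
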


\subsection{Complements}
We now introduce complements for generalized pairs. Note that our definition of complements is a slightly generalization of the usual definition of complements for generalized pairs (c.f. \cite{Bir19}).
\begin{defn}[Complements]\label{defn:complements}
     Let $(X,B+M)$ be a generalized pair with data $X'\xto X\to Z$ and $M'=\sum \mu_jM_j'$ such that $\mu_j\ge0$, $M_j'$ are b-Cartier nef$/Z$ divisors, and $\mu_j=0$ iff $M_j$ is trivial over $Z$. We say that $(X/Z\ni z,B^++M^+)$ is an \emph{$\Rr$-complement} of $(X/Z\ni z,B+M)$ if $(X,B^++M^+)$ is g-lc, $B^+\ge B,\mu_j^+\ge\mu_j$ for any $j$, and $K_X+B^++M^+\equiv0$ over a neighborhood of $z$, where $M^{+'}=\sum_j\mu_j^+M_j'$.
     
     Let $n$ be a positive integer. 
     We say that a generalized pair $(X/Z\ni z,B^++M^+)$ is an \emph{$n$-complement} of $(X/Z\ni z,B+M)$, if over a neighborhood of $z$, we have
     \begin{enumerate}
        \item $(X,B^++M^+)$ is generalized lc,
        \item $n(K_X+B^++M^+)\sim 0$,
        \item $nB^+\ge n\lfloor B\rfloor+\lf(n+1)\{B\}\rf$, and
        \item $n\mu_j^+\ge n\lfloor \mu_j\rfloor+\lf(n+1)\{\mu_j\}\rf$ and $n(M^+)'$ is b-Cartier, where $M^{+'}=\sum \mu_j^+M_j'$.
     \end{enumerate}
\end{defn}
     We say that $(X/Z\ni z,B^++M^+)$ is a \emph{monotonic $n$-complement} of $(X/Z\ni z,B+M)$ if we additionally have $B^+\ge B$ and $\mu_j^+\ge\mu_j$ for any $j$. We say that $(X/Z\ni z,B+M)$ is \emph{$\Rr$-complementary} (respectively \emph{$n$-complementary}) if it has an $\Rr$-complement (respectively $n$-complement).
     
     If $\dim Z=0$, we will omit $Z$ and $z$.
     If for any  $z\in Z$, $(X/Z\ni z,B+M)$ is $\Rr$-complementary (respectively $n$-complementary), then we say that $(X/Z,B+M)$ is $\Rr$-complementary (respectively $n$-complementary) .
     
The following lemma is well-known to experts (c.f. \cite[6.1]{Bir19}). We will use the lemma frequently without citing it in this paper. 
\begin{lem}\label{lem: pullbackcomplements}
     Let $(X,B+M)$ be a generalized pair with data $X'\xto X\to Z$ and $M'$, and $z\in Z$ a point. Assume that $g:X\dashrightarrow X''/Z$ is a birational contraction and $B'',M''$ are the strict transforms of $B,M$ respectively.
     \begin{enumerate}
        \item If $(X/Z\ni z,B+M)$ is $\Rr$-complementary, then $(X''/Z\ni z,B''+M'')$ is $\Rr$-complementary.
        \item Let $n$ be a positive integer. If $g$ is $-(K_X+B+M)$-non-positive and $(X''/Z\ni z,B''+M'')$ is $\Rr$-complementary (respectively monotonic $n$-complementary), then $(X/Z\ni z,B+M)$ is $\Rr$-complementary (respectively monotonic $n$-complementary).
     \end{enumerate}
\end{lem}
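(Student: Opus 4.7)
The plan is to set up a common resolution $p \colon W \to X$, $q \colon W \to X''$ of the birational contraction $g$, chosen high enough that $M'$ and all the component nef parts $M_j'$ descend to $W$; in particular the traces of these b-divisors on $W$ are the same regardless of whether one computes from $X$ or from $X''$. The key structural fact I will use throughout is that, since $g$ is a birational contraction, its inverse extracts no divisors: equivalently, every $p$-exceptional prime divisor on $W$ is also $q$-exceptional, and every prime divisor on $X''$ arises as the strict transform of a prime divisor on $X$ via $g$.

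For part (1), I transport the complement by setting $B^{+''} := g_*B^+$ and keeping $\mu_j^{+''} := \mu_j^+$. Pulling back $K_X + B^+ + M^+$ to $W$ gives a g-lc expression that is $\equiv 0$ over a neighborhood of $z \in Z$; pushing forward via $q$ yields $K_{X''} + B^{+''} + M^{+''}$ with the same numerical vanishing and preserved g-lc property, since g-log discrepancies at divisors over $X''$ are witnessed on $W$ and inherit from those over $X$. The inequalities $B^{+''} \geq B''$ and $\mu_j^{+''} \geq \mu_j''$ are immediate from monotonicity of strict transforms and the equality of nef-part coefficients.

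For part (2), the construction is the reverse: define $K_W + B_W^+ + M_W^+ := q^*(K_{X''} + B^{+''} + M^{+''})$ and set $B^+ := p_* B_W^+$, $\mu_j^+ := \mu_j^{+''}$. The relation $\equiv 0$ over a neighborhood of $z$ descends through $p_*$, and g-lc-ness at divisors over $X$ follows from g-lc-ness on $W$. The crucial inequality $B^+ \geq B$ is checked on $W$: writing $K_W + B_W + M_W := p^*(K_X + B + M)$ and $K_W + B_W'' + M_W := q^*(K_{X''} + B'' + M'')$ (using the same nef-part trace $M_W$ on $W$, legitimate because $M'$ descends to $W$), the hypothesis that $g$ is $-(K_X + B + M)$-non-positive translates into $B_W \leq B_W''$ with a $q$-exceptional effective difference, while $B^{+''} \geq B''$ and $\mu_j^{+''} \geq \mu_j$ give $B_W'' \leq B_W^+$; hence $B_W \leq B_W^+$ and consequently $B = p_* B_W \leq p_* B_W^+ = B^+$. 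In the monotonic $n$-complement case, the $n$-linear equivalence $n(K_{X''} + B^{+''} + M^{+''}) \sim 0$ and the b-Cartier-ness of $n M^{+'}$ transport identically, since $n M^{+'}$ is a fixed b-Cartier b-divisor independent of the chosen birational model.

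The most delicate point is the boundary inequality in part (2): one must verify that the nef-part contributions cancel cleanly on $W$ so that $B_W \leq B_W^+$ follows by composing $B_W \leq B_W''$ (from non-positivity) with $B_W'' \leq B_W^+$ (from $B^{+''} \geq B''$ and $\mu_j^{+''} \geq \mu_j$). Descending $M'$ to $W$ at the outset is what makes this cancellation possible, and the fact that $g$ is required to be non-positive for $-(K_X + B + M)$ rather than merely for $-K_X$ is precisely what absorbs the discrepancy contributions coming from $g$-exceptional divisors of $B$, preventing $B^+$ from dropping below $B$ on them.
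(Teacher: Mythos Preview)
Your argument is correct and is precisely the standard common-resolution argument that the paper defers to: the paper does not supply a proof of this lemma at all, but simply declares it ``well-known to experts'' with a reference to \cite[6.1]{Bir19}. Your write-up is therefore more detailed than anything in the paper, and the mechanism you describe---descending $M'$ to a common roof $W$ so that the nef-part traces agree, then comparing $B_W \le B_W'' \le B_W^+$ via the non-positivity hypothesis and the negativity lemma for $q^*M_j'' - M_{j,W}$---is exactly how the cited argument in Birkar proceeds.
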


\section{Uniform rational polytopes}\label{section3}

\subsection{Accumulation points of g-lc thresholds}
     The goal of this subsection is to prove Theorem N which is a  generalization of \cite[Proposition 3.10]{Nak16}. 
     
     Let $X$ be a variety, $B_i$ distinct prime divisors on $X$ and $d_i(t):\Rr\to \Rr$ $\Rr$-linear functions. We call the formal finite sum $\sum_i d_i(t)B_i$ a \emph{linear functional divisor}.    
    
\begin{defn}[$\Dd_c(\Ii),\Dd_c^{nef}(\Ii)$]
    Let $c$ be a non-negative real number and $0\in \Ii\subseteq[0,+\infty)$ a set. 
    
    $(1)$ We define $\Dd_c(\Ii)$ to be the set of linear functional divisors $B(t):=\sum_i{d_i(t)B_i},$ such that
     \begin{itemize}
          \item for each $i$, either $d_i(t)=1$ or $d_i(t)=\frac{m-1+u+kt}{m}$, where $m\in \Zz_{>0},$ $u\in \sum\Ii$ and $k\in \Zz$, and
          \item $u+kt$ above can be written as $u+kt=\sum_j{(u_j+k_jt)},$ where $u_j\in \Ii,k_j\in \Zz$, and $u_j+k_jc\ge 0$ for each $j$.
     \end{itemize}
     For convenience, we may write $B(t)\in \Dd_c(\Ii)$.  
     
    $(2)$ We define $\Dd_c^{nef}(\Ii)$ to be the set of $X'\xto X\to Z$ of varieties together with linear functional divisors $M'(t):=\sum_j{\mu_j(t)M_j'},$ and $M(t)=f_*M(t)'$, such that
     \begin{itemize}
       \item $f$ is a projective birational morphism and $X\to Z$ is a contraction,
       \item $M_j'$ is b-Cartier nef$/Z$ divisor on $X'$ for each $j$, and 
       \item for each $j$, either $\mu_j(t)=1$ or $\mu_j(t)=v+nt=\sum_i (v_i+n_it)$, where $v_i\in \Ii,n_i\in \Zz,v_i+n_ic\ge 0$ for each $i$.
     \end{itemize}
     For convenience, we may write $(X'\xto X\to Z,M(t))\in\Dd_c^{nef}(\Ii)$. If $\dim Z=0$, then we may omit $Z$.
\end{defn}
     For convenience, if $B(t)\in\Dd_c(\Ii)$, and $(X'\xto X\to Z,M(t))\in\Dd_c^{nef}(\Ii)$ such that $(X,B(c)+M(c))$ is generalized pair with data $X'\xto X\to Z$ and $M'(c)$, then we say $(X,B(t)+M(t))$ is a generalized pair with data $X'\xto X\to Z$ and $M'(t)$.
     
     The form of the coefficient $d_i(t)$ is preserved by generalized adjunction. The proof is similar to \cite[Lemma 3.2]{Nak16} and \cite[Lemma 3.3]{Bir19}, we may omit it.
\begin{lem}
     Let $c$ be a non-negative real number and $0\in \Ii\subseteq[0,+\infty)$ a set. Suppose that $(X,B(t)+M(t))$ is a $\Qq$-factorial generalized pair with data $X'\xto X\to Z$ and $M'(t)$ such that
  \begin{enumerate}
    \item $(X,B(c)+M(c))$ is g-lc,
    \item $B(t)=\sum_i d_i(t)B_i\in \Dd_c(\Ii),$
    \item $d_0(t)=1,$ and $d_i(c)>0$ for each $i,$ and
    \item $(X'\xto X\to Z,M(t))\in\Dd_c^{nef}(\Ii)$.
  \end{enumerate}
  Let $S$ be the normalization of $B_0,$ and
  $$K_{S}+B_{S}(t)+M_{S}(t)=(K_{X}+B(t)+M(t))|_{S}$$
  the generalized adjunction. Then $B_{S}(t)\in \Dd_c(\Ii).$
\end{lem}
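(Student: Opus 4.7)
The plan is to compute $\mult_D B_S(t)$ at each prime divisor $D$ on $S$ via generalized divisorial adjunction and then rewrite the resulting affine-in-$t$ function in the hyperstandard form of $\Dd_c(\Ii)$ by an explicit algebraic manipulation. The argument combines \cite[Lemma 3.2]{Nak16}, which handles irrational coefficients in the classical setting, with \cite[Lemma 3.3]{Bir19}, which handles the generalized nef-part contribution in the hyperstandard case.

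First, I would replace $X'$ by a sufficiently high log resolution $f:X'\to X$ on which $(X,B(c))$ is log-resolved, the strict transform $S':=f^{-1}_*B_0$ is smooth, $S'+\sum_i B_i+\Exc(f)$ is snc, and $M'(t)$ is expressed in the prescribed form $\sum_j\mu_j(t)M'_j$. Writing
\[
K_{X'}+B'(t)+M'(t)=f^*\bigl(K_X+B(t)+M(t)\bigr),
\]
the coefficients of $B'(t)$ are affine linear functions of $t$: at the strict transform of each $B_i$ one has coefficient $d_i(t)$, and at each exceptional prime the coefficient is an affine function of $t$ in the $d_i(t)$'s and $\mu_j(t)$'s determined by the displayed identity. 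Setting $B_{S'}(t):=(B'(t)-S')|_{S'}$ and pushing down by $g:=f|_{S'}$ yields $B_S(t)=g_*B_{S'}(t)$.

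Next, fix a prime divisor $D\subset S$. Either $(X,B(c)+M(c))$ fails to be g-plt at the generic point $\eta_D$, in which case $\mult_D B_S(c)=1$, so $\mult_D B_S(t)\equiv 1\in\Dd_c(\Ii)$; or $(X,B_0)$ is a plt surface germ at $\eta_D$, formally a cyclic quotient of some order $m\in\Zz_{>0}$. In the latter case, divisorial adjunction on the snc model above the cyclic cover, descended to $X$, yields a Shokurov--Kawamata type formula
\[
\mult_D B_S(t)=\frac{m-1}{m}+\sum_{i\ge 1}\frac{\ell_i\,d_i(t)}{m}+\sum_j\frac{p_j\,\mu_j(t)}{m},
\]
with non-negative integers $\ell_i,p_j\in\Zz_{\ge 0}$ recording the local intersection multiplicities at $\eta_D$ of, respectively, $B_i$ and the $f$-exceptional divisor carrying the coefficient $\mu_j(t)$, against $B_0$ on the cover.

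Finally, substitute $d_i(t)=\frac{m_i-1+u_i(t)}{m_i}$ with $u_i(t)=\sum_\beta(u_{i\beta}+k_{i\beta}t)$, $u_{i\beta}\in\Ii$, $k_{i\beta}\in\Zz$, $u_{i\beta}+k_{i\beta}c\ge 0$, and $\mu_j(t)=\sum_\alpha(v_{j\alpha}+n_{j\alpha}t)$ satisfying the analogous non-negativity at $t=c$. Taking common denominator $m':=m\cdot\lcm_{i:\ell_i>0}m_i$ and using the algebraic identity
\[
(m-1)n+\ell(n-1)=mn-1+(\ell-1)(n-1),
\]
repeatedly applied across the sum, one rewrites $\mult_D B_S(t)$ in the shape $\frac{m'-1+u'(t)}{m'}$, where $u'(t)$ is assembled from $\ell_i m'/(mm_i)$-fold copies of each piece $(u_{i\beta}+k_{i\beta}t)$, $p_j m'/m$-fold copies of each piece $(v_{j\alpha}+n_{j\alpha}t)$, together with the non-negative integer residual $\sum_i(\ell_i-1)(m_i-1)m'/(mm_i\cdot\ldots)$. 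Each of the former pieces already satisfies the non-negativity condition at $t=c$ by hypothesis, and the integer residual is absorbed by adjoining trivial $(0+kt)$-pieces using $0\in\Ii$ together with the g-lc-constraint $\mult_D B_S(c)\in[0,1]$. The main obstacle is exactly this last bookkeeping---showing that the integer residual admits a decomposition into pieces $(u+kt)$ with $u\in\Ii$, $k\in\Zz$, $u+kc\ge 0$---which is the combinatorial step carried out in \cite[Lemma 3.2]{Nak16} and \cite[Lemma 3.3]{Bir19}, transplanted without change to the combined irrational-generalized setting.
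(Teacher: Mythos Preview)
Your approach is exactly what the paper does: it omits the proof entirely, pointing to \cite[Lemma 3.2]{Nak16} and \cite[Lemma 3.3]{Bir19}, and your sketch is a faithful splice of those two computations.

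One point to tighten. Your dichotomy should be on whether $(X,B_0)$ is plt at $\eta_D$, not on whether the full pair $(X,B(c)+M(c))$ is g-plt there. As written, the step ``$\mult_D B_S(c)=1$, so $\mult_D B_S(t)\equiv 1$'' is unjustified: when $(X,B_0)$ is plt (cyclic quotient of order $m$) but the full pair happens not to be g-plt, the adjunction formula still reads
\[
\mult_D B_S(t)=\frac{m-1}{m}+\sum_{i\ge1}\frac{\ell_i d_i(t)}{m}+\sum_j\frac{p_j\mu_j(t)}{m},
\]
which equals $1$ at $t=c$ but is a genuine affine function of $t$, not identically $1$. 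This sub-case is of course already absorbed by your second case and its algebraic rewriting, so nothing is lost---you just need to relabel the first alternative as ``$(X,B_0)$ not plt at $\eta_D$''. In that case the different of $B_0$ alone already has coefficient $1$ at $D$; combined with g-lc at $t=c$ and the hypothesis $d_i(c)>0$ for $i\ge 1$, this forces all the extra boundary and nef-part contributions to vanish identically, and then $\mult_D B_S(t)\equiv 1$ really does hold.
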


     We define $\mathfrak{L}_d(\Ii)$, the set of g-lc thresholds derived from a set of non-negative real numbers $\Ii$, and $\mathfrak{N}_d(\Ii)$, the set of generalized numerical trivial thresholds derived from $\Ii.$

\begin{defn}[$\mathfrak{L}_d(\Ii)$]
     Let $d$ be a positive integer, and $0\in\Ii\subseteq[0,+\infty)$ a set. We write $c\in \mathfrak{L}_d(\Ii)$ if there exists a $\Qq$-factorial generalized pair $(X,B(t)+M(t))$ with data $X'\xto X\to Z$ and $M'(t)$ such that
    \begin{enumerate}    
       \item $(X,B(c)+M(t))$ is a g-lc pair of dimension $\le d,$
       \item $B(t)\in \Dd_c(\Ii),$ 
       \item $(X'\xto X\to Z,M(t))\in\Dd_c^{nef}(\Ii)$, and
       \item either $(X,B(c+\epsilon)+M(c+\epsilon))$ is not g-lc for any $\epsilon>0$, or $(X,B(c-\epsilon)+M(c-\epsilon))$ is not g-lc for any $\epsilon>0$.
    \end{enumerate}
\end{defn}

\begin{defn}[$\mathfrak{N}_d(\Ii)$]
     Let $d$ be a positive integer, and $0\in\Ii\subseteq[0,+\infty)$ a set. We write $c\in \mathfrak{N}_d(\Ii)$ if there exists a $\Qq$-factorial projective generalized pair $(X,B(t)+M(t))$ with data $X'\xto X$ and $M'(t)$ such that 
     \begin{enumerate}    
        \item $(X,B(c)+M(c))$ is a g-lc pair of dimension $\le d,$
        \item $B(t)\in \Dd_c(\Ii),$
        \item $(X'\xto X\to Z,M(t))\in\Dd_c^{nef}(\Ii)$, and
        \item $K_X+B(c)+M(c)\equiv 0,$ and
        \item $K_X+B(c')+M(c')\not\equiv 0$ for some $c'$.
     \end{enumerate}
\end{defn}

\begin{lem}
     Let $d\ge2$ be a positive integer, and $0\in \Ii\subseteq[0,+\infty)$ a set. Then $\mathfrak{L}_{d}(\Ii)\subseteq\mathfrak{N}_{d-1}(\Ii).$
\end{lem}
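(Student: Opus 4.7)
Let $c\in\mathfrak{L}_d(\Ii)$ be witnessed by a $\Qq$-factorial generalized pair $(X,B(t)+M(t))$ with data $X'\xto X\to Z$ and $M'(t)$ satisfying conditions (1)--(4) in the definition. By replacing $t$ with $-t$ we may assume throughout that $(X,B(c+\epsilon)+M(c+\epsilon))$ is not g-lc for every $\epsilon>0$; the opposite case is strictly symmetric. The goal is to produce a projective g-lc pair $(S,B_S(t)+M_S(t))$ of dimension $\le d-1$ with $B_S(t)\in\Dd_c(\Ii)$, $M_S(t)\in\Dd_c^{nef}(\Ii)$, with $K_S+B_S(c)+M_S(c)\equiv 0$, and with $K_S+B_S(c')+M_S(c')\not\equiv 0$ for some $c'$. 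The approach follows \cite[Proposition 3.10]{Nak16} after replacing dlt modification and adjunction with their generalized counterparts.

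First I would extract a generalized log canonical place. Because the coefficients $d_i(t)$ and $\mu_j(t)$ are affine in $t$, the generalized log discrepancy of any fixed prime divisor $E$ over $X$ is also affine in $t$. A standard compactness/limit argument (using that only finitely many divisors over $X$ can be extracted as log canonical places of a g-lc pair, cf.\ Birkar's BAB-type finiteness) yields a single prime divisor $E$ over $X$ with $a(E,X,B(c)+M(c))=0$ and $a(E,X,B(c+\epsilon)+M(c+\epsilon))<0$ for all sufficiently small $\epsilon>0$. I would then apply the g-dlt modification lemma to obtain $\pi\colon Y\to X$ on which $E$ appears as a prime divisor $S$ with coefficient $1$ in $B_Y(c)$, while $(Y,B_Y(c)+M_Y(c))$ is $\Qq$-factorial g-dlt and $K_Y+B_Y(c)+M_Y(c)=\pi^*(K_X+B(c)+M(c))$. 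Pulling back through $\pi$ and applying the preceding lemma about the preservation of the form of coefficients, one checks $B_Y(t)\in\Dd_c(\Ii)$ and $(Y'\to Y\to Z,M_Y(t))\in\Dd_c^{nef}(\Ii)$.

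Next I would reduce to the situation where $\pi(S)$ is a closed point. If $\dim\pi(S)>0$, consider the perturbation $K_Y+B_Y(c)+M_Y(c)-\epsilon S\equiv_X-\epsilon S$ for $0<\epsilon\ll 1$, which is the log canonical divisor of a $\Qq$-factorial g-klt pair. Running a $(K_Y+B_Y(c)+M_Y(c)-\epsilon S)$-MMP over $X$ (permissible by the MMP lemmas cited from \cite{BZ16,HL18} since $Y\to X$ is birational and the ambient pair is g-klt) must terminate with a Mori fiber contraction $\phi\colon Y''\to T/X$ on which $S''$ is horizontal: a divisorial contraction of $S$ is impossible because $a(S,X,B(c)+M(c))=0$ forces $S$ to survive on any g-lc minimal model. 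Restricting everything to a general fiber of $\phi$ strictly decreases $\dim\pi(S)$ while preserving the $\Dd_c$/$\Dd_c^{nef}$ structure (general hyperplane/fiber restriction does not disturb coefficient forms). Iterating this reduction, I may assume $\pi(S)$ is a closed point, hence $S$ is projective of dimension $\le d-1$.

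Finally I would apply generalized adjunction to $S$: the preceding lemma shows that the induced $B_S(t)\in\Dd_c(\Ii)$, and $M_S(t)\in\Dd_c^{nef}(\Ii)$ by restricting the nef Cartier divisors to $S$. At $t=c$,
$$K_S+B_S(c)+M_S(c)=(K_Y+B_Y(c)+M_Y(c))|_S=\pi^*(K_X+B(c)+M(c))|_S\equiv 0,$$
because $\pi(S)$ is a point. For the non-degeneracy $K_S+B_S(c')+M_S(c')\not\equiv 0$ for some $c'$, note that the coefficient of $S$ in $B_Y(t)$ equals $1-a(S,X,B(t)+M(t))$, which strictly exceeds $1$ for $t>c$ by choice of $E$; hence the linear variation $(B_Y(t)-B_Y(c))|_S$ contains the nontrivial term $(d_S(t)-1)S|_S$, and $S|_S\not\equiv 0$ on $S$ since $S$ is $\pi$-exceptional and $\pi$ contracts $S$ to a point. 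This exhibits $c\in\mathfrak{N}_{d-1}(\Ii)$. The main technical obstacle is the MMP reduction in the third step: one must verify termination and the preservation of $\Dd_c(\Ii)$, $\Dd_c^{nef}(\Ii)$ through each flip/divisorial contraction/fiber restriction in the generalized-pair setting, where the available MMP machinery for generalized pairs from \cite{BZ16,HL18} must be invoked carefully to handle the nef parts.
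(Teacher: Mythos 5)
Your overall strategy — extract a generalized log canonical place, pass to a g-dlt modification, reduce to the case where the place maps to a point by a relative MMP, then apply generalized adjunction — is the right shape of argument and does follow \cite{Nak16}. However, there is a genuine internal tension in the way you set up the linear functional boundary $B_Y(t)$ on the modification, and it leads to a gap in the final non-degeneracy step.

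You take $B_Y(t)$ to be the crepant pullback of $B(t)$ through $\pi$, so that $\mult_S B_Y(t) = 1 - a(S,X,B(t)+M(t))$ varies with $t$; this is what you rely on later when you say the variation $(B_Y(t)-B_Y(c))|_S$ contains the nontrivial term $(d_S(t)-1)S|_S$. But the preceding lemma of the paper on preservation of the form of coefficients under generalized adjunction is stated only under the hypothesis $d_0(t)=1$ \emph{identically}, not merely $d_0(c)=1$. So you cannot both keep $d_S(t)$ non-constant (to force the $(d_S(t)-1)S|_S$ contribution) and invoke that lemma to conclude $B_S(t)\in\Dd_c(\Ii)$. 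If you instead normalize the $S$-coefficient to be constantly $1$ (as the g-dlt modification naturally gives, taking reduced exceptional divisors) so that the lemma applies, then $(d_S(t)-1)S|_S$ vanishes identically and your proposed source of non-degeneracy disappears. The non-degeneracy then has to come from the discrepancy term $\sum_E a(E,X,B(t)+M(t))\,E|_S$ arising from the fact that $K_Y+B_Y(t)+M_Y(t)$ differs from $\pi^*(K_X+B(t)+M(t))$ for $t\neq c$, and this requires ruling out cancellation among the contributions of the several exceptional divisors — a point your proof does not address, and which in fact forces a more careful choice of which places to extract (e.g.\ a one-divisor extraction so that only $a(S,\cdot)S|_S$ appears, and then the negativity lemma gives $S|_S\not\equiv 0$ when $\pi(S)$ is a point).

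Two further, smaller imprecisions. First, you claim to obtain $B_Y(t)\in\Dd_c(\Ii)$ by ``pulling back through $\pi$ and applying the preceding lemma,'' but that lemma concerns adjunction to a divisor, not crepant pullback of coefficients; the pullback statement would need its own justification. Second, in the reduction to $\pi(S)$ a point, once you restrict to a general fibre $F$ of the Mori fibre space $Y''\to T/X$ the image of the new pair in $X$ is already a point, so no iteration is needed — but one must then check that the restricted family $(F,B_{Y''}(t)|_F+M_{Y''}(t)|_F)$ is still a witness (in particular that the non-degeneracy at some $c'$ survives the restriction), which is exactly the point made problematic by the first gap.
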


\begin{proof}
     The proof is similar to \cite[Theorem 3.6]{Nak16}.
\end{proof}

\begin{thm}\label{thm:accumulationpoints1}
     Let $d$ be a positive integer, and $\Ii\subseteq[0,1]$ a finite set. Then the accumulation points of $\mathfrak{N}_d(\Ii)$ belong to $\Span_{\Qq}(\Ii\cup\{1\})$. In particular, the accumulation points of $\mathfrak{L}_{d}(\Ii)$ belong to $\Span_{\Qq}(\Ii\cup\{1\})$.
\end{thm}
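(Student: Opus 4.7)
The plan is by contradiction. Assume $c$ is an accumulation point of $\mathfrak{N}_d(\Ii)$ but $c \notin \Span_{\Qq}(\Ii \cup \{1\})$; since $0 \in \Span_{\Qq}(\Ii \cup \{1\})$ vacuously, we must have $c > 0$. Choose a pairwise distinct sequence $c_n \to c$ in $\mathfrak{N}_d(\Ii)$, and for each $n$ fix a witnessing $\Qq$-factorial projective generalized pair $(X_n, B_n(t) + M_n(t))$ with data $X_n' \to X_n$ and $M_n'(t)$ of dimension $\le d$ satisfying conditions (1)--(5) of the definition of $\mathfrak{N}_d(\Ii)$.

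The first step is to bound, under the irrationality of $c$, the combinatorial data defining the coefficients. Each nontrivial coefficient $d_{n,i}(t) = \frac{m_{n,i}-1+u_{n,i}+k_{n,i}t}{m_{n,i}}$ has decomposition $u_{n,i} + k_{n,i}t = \sum_{\ell}(u_{n,i,\ell} + k_{n,i,\ell}t)$ with $u_{n,i,\ell} \in \Ii$, $k_{n,i,\ell} \in \Zz$, and $u_{n,i,\ell} + k_{n,i,\ell}c_n \ge 0$. Each summand lies in $[0,1]$ (since their sum is at most $1$); as $\Ii$ is finite and $c_n$ is bounded away from $0$, the integers $|k_{n,i,\ell}|$ are uniformly bounded, so the pairs $(u_{n,i,\ell},k_{n,i,\ell})$ range in a finite set $F = F(\Ii, c)$. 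The irrationality hypothesis ensures that $u + kc \neq 0$ for every $(u,k) \in F \setminus \{(0,0)\}$, whence $\delta_0 := \min\{u+kc : (u,k) \in F,\ u+kc > 0\} > 0$, and for large $n$ every nonzero summand exceeds $\delta_0/2$. This caps the number of nonzero summands per coefficient at $\lceil 2/\delta_0\rceil$, so the aggregated triples $(m_{n,i}, u_{n,i}, k_{n,i})$ lie in a finite set $H$ (the bound on $m_{n,i}$ coming from $d_{n,i}(c_n) < 1$). An analogous argument bounds the nef-part data $(v_{n,j}, k_{n,j}')$. Passing to a subsequence, the multiset of combinatorial types is the same for all $n$, so $B_n(t)$ and $M_n(t)$ share a common "formula" in $t$ across $n$.

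To derive the contradiction I would invoke the Global ACC for generalized pairs (Theorem \ref{thm: globalACC}) applied to $(X_n, B_n(c_n) + M_n(c_n))$. Although the collection of coefficients across $n$ is not itself DCC (each stabilized combinatorial slot $(m_i^*, u_i^*, k_i^*)$ with $k_i^* \neq 0$ yields a monotone sequence $\frac{m_i^* - 1 + u_i^* + k_i^* c_n}{m_i^*}$ accumulating at its limit), one can pass to a further monotone subsequence of $c_n$ and, after removing the nef components with $M_{n,j}' \equiv 0/X_n$ to meet hypothesis (4), reduce to a bounded family of projective g-lc Calabi-Yau type generalized pairs of dimension $\le d$ via a BAB/compactness reduction, to which Global ACC applies stratum-by-stratum. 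The upshot is that the coefficients of $B_n(c_n)$ and $M_n(c_n)$ are forced into a finite set $\Ii_0$ independent of $n$. Meanwhile, by condition (5), the $t$-derivative $D_n = \sum_i \frac{k_i^*}{m_i^*} B_{n,i} + \sum_j k_j'^* M_{n,j}$ is not numerically trivial, so some stabilized type $(m_{i^*}^*, u_{i^*}^*, k_{i^*}^*)$ (or a nef-part analog) has $k_{i^*}^* \neq 0$. The corresponding coefficient $\frac{m_{i^*}^* - 1 + u_{i^*}^* + k_{i^*}^* c_n}{m_{i^*}^*}$ must then take values in the finite set $\Ii_0$; since this is an injective affine function of $c_n$, only finitely many $c_n$ are possible, contradicting pairwise distinctness.

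The main obstacle is making the invocation of Global ACC rigorous, since the raw set of coefficients across $n$ is not DCC: it accumulates continuously at the values $\frac{m_i^* - 1 + u_i^* + k_i^* c}{m_i^*}$. The resolution requires either a boundedness (BAB-type) reduction extracting a subfamily with a truly finite coefficient set, or a careful stratum-by-stratum application of Global ACC keyed to the stabilized combinatorial types. This is the technical heart of the argument, generalizing Nakamura's treatment of the usual-pair case in \cite[Proposition 3.10]{Nak16} to incorporate the nef-part data.
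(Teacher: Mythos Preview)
Your combinatorial bounding step is essentially correct and parallels the paper's Remark \ref{rem: contralcoeff} (which invokes \cite[Lemma 3.7, Claim 3.13]{Nak16}): once $c \notin \Span_\Qq(\Ii \cup \{1\})$, the triples $(m,u,k)$ and the nef-part data are forced into a finite set, so after passing to a subsequence the coefficient formulas stabilize.

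The second half, however, has a genuine gap that you yourself identify but do not close. Global ACC (Theorem \ref{thm: globalACC}) cannot be applied to the family $(X_n, B_n(c_n) + M_n(c_n))$, because the coefficient set $\{d_i(c_n) : n\}$ is not DCC: if $c_n$ is monotone and some stabilized $k_i^* \neq 0$, then half the slots produce strictly \emph{increasing} sequences. Your proposed ``BAB/compactness reduction'' does not repair this. BAB bounds $\epsilon$-lc Fano varieties, but here the $X_n$ are merely $\Qq$-factorial projective with $K_{X_n} + B_n(c_n) + M_n(c_n) \equiv 0$; there is no reason they are Fano, nor that their singularities are bounded away from lc, so no boundedness statement is available a priori. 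Appealing vaguely to ``stratum-by-stratum Global ACC'' does not help either, since each stratum is exactly one of the non-DCC sequences.

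The paper's route is precisely to manufacture those missing bounds by an induction on dimension. It strengthens the statement to Theorem N (allowing an auxiliary divisor $A_i$ whose coefficients approach $1$) and couples it with Theorem P (same hypotheses plus $X_i$ Fano of Picard number one and g-klt). The engine is: run the MMP to reach a Mori fiber space and restrict to a fiber, or locate a component of $\lfloor A \rfloor$ and apply generalized adjunction, thereby dropping dimension (Propositions \ref{prop1}, \ref{prop2}, \ref{prop:pandnimplyn}); in the Picard-one Fano situation, if the minimal log discrepancies stay bounded below then BAB \emph{does} apply and an intersection-number count finishes (Proposition \ref{prop: pifepsilonlc}); otherwise extract the near-lc divisor, fold it into $A$, and run the MMP again (Proposition \ref{prop: nimpliesp}). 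Global ACC is invoked only inside these reductions, in situations (e.g.\ Claim \ref{lem roundupglc}) where the relevant coefficients genuinely form a DCC set. Your sketch is missing this entire MMP/adjunction induction, which is where the actual work lies.
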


     Theorem \ref{thm:accumulationpoints1} immediately follows from the following theorem.

\begin{thmN}\label{thmN}
     Let $d$ be a positive integer, $c$ a non-negative real number and $0\in \Ii\subseteq[0,+\infty)$ a finite set.
         Suppose that for each $i\in \Zz_{>0},$ there exist positive real numbers $c_i$ and $\Qq$-factorial projective generalized pairs $(X_i,(A_i+B_i(t))+M_i(t))$ with data $X_i'\stackrel{f_i}\to X_i$ and $M_i'(t)$, such that
     \begin{enumerate}    
        \item $(X_i,(A_i+B_i(c_i))+M_i(c_i))$ is a g-lc pair of dimension $\le d,$
        \item the coefficients of $A_i$ are approaching $1$,
        \item $B_i(t)\in \Dd_{c_i}(\Ii)$,
        \item $(X_i'\xto X_i,M_i(t))\in\Dd_{c_i}^{nef}(\Ii)$,
        \item $\lim c_i=c,$   
        \item $K_{X_i}+A_i+B_i(c_i)+M_i(c_i)\equiv 0,$ and
        \item $K_{X_i}+A_i+B_i(c_i')+M_i(c_i')\not\equiv 0$ for some $c_i'$.
     \end{enumerate}
     Then $c\in\Span_{\Qq}(\Ii\cup\{1\}).$
\end{thmN}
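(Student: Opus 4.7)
I would adapt Nakamura's proof of his Proposition~3.10 to the generalized setting, with the global ACC for generalized pairs (Theorem~\ref{thm: globalACC}) playing the central role and the nef functionals $\mu_l(t)=v_l+n_l t$ handled in parallel with the boundary functionals $\frac{m-1+u+kt}{m}$. The argument has three pieces: combinatorial normalization, application of global ACC, and extraction of rationality.

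\emph{Normalization by subsequences.} Because $\Ii$ is finite and the g-lc condition forces all coefficients of $B_i(c_i)$, $M_i(c_i)$ into $[0,1]$, only finitely many tuples $(u,k)$ and $(v,n)$ appear in the description of coefficients that stay bounded away from $1$; the only unbounded parameter is the denominator $m$, and $m\to\infty$ forces the coefficient to approach $1$. After a diagonal extraction I absorb all ``coefficient $\to 1$'' components of $B_i$ into $A_i$, obtaining a new boundary $\widetilde A_i$, and am left with surviving affine functionals $d_j(t)=\frac{m_j-1+u_j+k_j t}{m_j}$ and $\mu_l(t)=v_l+n_l t$ whose data $(m_j,u_j,k_j)$, $(v_l,n_l)$ is independent of $i$. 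A further monotonization of the multiset of $\widetilde A_i$-coefficients (which approach $1$), and of $c_i$ itself, makes all coefficients of $\widetilde A_i+\widetilde B_i(c_i)+\widetilde M_i(c_i)$ lie in a single DCC set $\Ii'$.

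\emph{Global ACC and rationality.} Theorem~\ref{thm: globalACC} applied to the numerically trivial g-lc pairs $(X_i,\widetilde A_i+\widetilde B_i(c_i)+\widetilde M_i(c_i))$ places all their coefficients in a fixed finite subset $\Ii_0'\subseteq\Ii'$. Each surviving $d_j(c_i)$ and $\mu_l(c_i)$ is an $\Rr$-affine function of $c_i$ with fixed slope, so being confined to the finite set $\Ii_0'$ while $c_i\to c$ forces it to be eventually constant in $i$. Condition~(7) ensures that some surviving slope $k_{j_0}$ (or $n_{l_0}$) is nonzero; combined with eventual constancy of $d_{j_0}(c_i)=d_{j_0}^{\star}$, this yields $c_i=c$ for large $i$ and the formula
\[
c \;=\; \frac{m_{j_0}\,d_{j_0}^{\star}-(m_{j_0}-1)-u_{j_0}}{k_{j_0}}.
\]
Since $u_{j_0}\in\sum\Ii\subseteq\Span_\Qq(\Ii\cup\{1\})$ and $m_{j_0},k_{j_0}\in\Zz$, the conclusion reduces to showing $d_{j_0}^{\star}\in\Span_\Qq(\Ii\cup\{1\})$. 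Every element of $\Ii_0'$ arises either as a multiplicity of $\widetilde A_i$ (which I arrange to lie in $\Span_\Qq(\Ii\cup\{1\})$ by a further diagonal extraction) or as $d_j(c)$ or $\mu_l(c)$ for one of the fixed affine functionals, and unwinding the defining formulas propagates $\Qq$-linearity and places $d_{j_0}^{\star}$, hence $c$, in $\Span_\Qq(\Ii\cup\{1\})$.

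\emph{Main obstacle.} The delicate point is assembling the DCC set $\Ii'$ when the surviving slopes $k_j$ have mixed signs: monotonizing $c_i\to c$ from one side gives DCC for one sign but can create strictly decreasing sequences for the other. Following Nakamura, one first peels off the ``wrong-sign'' surviving functionals by a nested subsequence and dimension induction, invoking the ACC for generalized lc thresholds (Theorem~\ref{thm: ACCforglcts}) on an auxiliary pair, after which global ACC applies cleanly to the residual configuration. All remaining steps are essentially linear algebra and standard subsequence manipulations.
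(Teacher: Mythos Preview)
Your proposal has a genuine circularity in the ``extraction of rationality'' step. After you establish (via global ACC) that $c_i=c$ for large $i$ and hence $d_{j_0}(c)=d_{j_0}^{\star}\in\Ii_0'$, you assert that every element of $\Ii_0'$ lies in $\Span_{\Qq}(\Ii\cup\{1\})$. But $\Ii_0'$ is just a finite subset of the DCC set $\Ii'$ you fed into Theorem~\ref{thm: globalACC}, and $\Ii'$ was built from the values $d_j(c_i)$ and $\mu_l(c_i)$ themselves. In particular $d_{j_0}^{\star}=d_{j_0}(c)=\frac{m_{j_0}-1+u_{j_0}+k_{j_0}c}{m_{j_0}}$ lies in $\Span_{\Qq}(\Ii\cup\{1\})$ \emph{if and only if} $c$ does, so ``unwinding the defining formulas'' gives you nothing. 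Global ACC constrains the coefficients to a finite set, but carries no information about the $\Qq$-linear span of that set: already for $X_i=\Pp^1$, $A_i=M_i=0$, $B_i(t)=t(p_1+p_2+p_3)$ your method records $\Ii_0'=\{c\}$ and then tries to conclude $c\in\Span_{\Qq}(\Ii\cup\{1\})$ from $c\in\{c\}$.

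The paper supplies exactly the missing ingredient: rationality comes from an \emph{intersection-theoretic} equation, not from global ACC. The proof is by simultaneous induction on $d$ with a companion statement (Theorem~P) for klt Fano varieties of Picard number one. In the base case one uses Birkar--Borisov--Alexeev--Borisov to place the $X_i$ in a bounded family, producing very ample $H_i$ with $-K_{X_i}\cdot H_i^{d-1}$ and $B_{il}\cdot H_i^{d-1}$ bounded integers; then $(K_{X_i}+B_i(c_i)+M_i(c_i))\cdot H_i^{d-1}=0$ is a $\Qq$-linear equation in $c_i$ with coefficients in $\Span_{\Qq}(\Ii\cup\{1\})$, forcing $c\in\Span_{\Qq}(\Ii\cup\{1\})$. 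The general case is reduced to this via g-dlt modification, running MMP to a Mori fibre space, and generalized adjunction onto a component of $\lfloor A_i\rfloor$ (Propositions~\ref{prop1}--\ref{prop: nimpliesp}). Your ``dimension induction'' hint in the final paragraph gestures at this, but the actual engine---BBAB boundedness yielding rational intersection numbers---is absent from your outline, and without it the argument cannot close.
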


\begin{thmP}\label{thmP}
     Let $d$ be a positive integer, $c$ a non-negative real number and $0\in \Ii\subseteq[0,+\infty)$ a finite set.     
     Suppose that for each $i\in \Zz_{>0},$ there exists positive real numbers $c_i$ and $\Qq$-factorial projective generalized pairs $(X_i,(A_i+B_i(t))+M_i(t))$ with data $X_i'\stackrel{f_i}\to X_i$ and $M_i'(t)$, such that
     \begin{enumerate}    
        \item $(X_i,(A_i+B_i(c_i))+M_i)$ is a g-klt pair of dimension $\le d,$
        \item $X_i$ is a Fano variety with Picard number $1$, 
        \item the coefficients of $A_i$ are approaching $1$,
        \item $B_i(t)\in \Dd_{c_i}(\Ii)$, 
        \item $(X_i'\xto X_i,M_i(t))\in\Dd_{c_i}^{nef}(\Ii)$,
        \item $\lim c_i=c,$   
        \item $K_{X_i}+A_i+B_i(c_i)+M_i(c_i)\equiv 0,$ and
        \item $K_{X_i}+A_i+B_i(c_i')+M_i(c_i)\not\equiv 0$ for some $c_i'$.
     \end{enumerate}
     Then $c\in\Span_{\Qq}(\Ii\cup\{1\}).$
\end{thmP}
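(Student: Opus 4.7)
The plan is to use the Birkar--Borisov--Alexeev--Borisov (BAB) theorem to put $\{X_i\}$ into a bounded family, then exploit $\rho(X_i)=1$ to convert the numerical identity $K_{X_i}+A_i+B_i(c_i)+M_i(c_i)\equiv 0$ into a single affine identity in $c_i$ whose coefficients lie in a fixed lattice $\tfrac{1}{N}\Zz$, and finally pass to the limit $i\to\infty$ to read off a $\Qq$-linear expression for $c$ in terms of $\Ii\cup\{1\}$.

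First I would establish boundedness of $\{X_i\}$. Since $(X_i,(A_i+B_i(c_i))+M_i(c_i))$ is g-klt, the sub-pair $(X_i,A_i)$ is sub-klt; because the coefficients of $A_i$ tend to $1$ while remaining strictly less than $1$, any log discrepancy of $X_i$ below a fixed threshold $\epsilon>0$ would eventually force a coefficient of $A_i$ to drop below $1-\epsilon$, contradicting g-klt for large $i$. Thus the $X_i$ form a family of $\epsilon$-lc Fano varieties of dimension $\le d$, and Birkar's BAB theorem places them in a bounded family, producing a positive integer $N$ such that $-NK_{X_i}$ is very ample and $N$ kills the Cartier index of every Weil divisor on $X_i$; moreover any effective divisor numerically bounded by $-K_{X_i}$ has a bounded number of components.

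Using the Picard number one hypothesis, let $H_i$ be the positive generator of $N^1(X_i)_\Qq$, so every Weil divisor on $X_i$ is a rational multiple of $H_i$. Write $-K_{X_i}\equiv_\Qq a_i H_i$, $A_i\equiv_\Qq \alpha_i H_i$, $B_{ij}\equiv_\Qq \beta_{ij}H_i$ for the prime components $B_{ij}$ of $\mathrm{Supp}\, B_i(t)$, and $M_{ik}\equiv_\Qq m_{ik}H_i$ for the descended nef summands $M_{ik}=(f_i)_*M'_{ik}$. Since $-NK_{X_i}$ is Cartier and $\rho(X_i)=1$, all of $a_i,\alpha_i,\beta_{ij},m_{ik}$ lie in $\tfrac{1}{N}\Zz_{\ge 0}$; effectiveness of $A_i+B_i(c_i)$, nefness of $M_i(c_i)$, and the identity
$$a_i=\alpha_i+\sum_j d_{ij}(c_i)\beta_{ij}+\sum_k \mu_{ik}(c_i)m_{ik}$$
bound each of these rationals from above, and together with the uniform positivity of the nonzero coefficients permitted by $\Dd_{c_i}(\Ii)$ and $\Dd_{c_i}^{nef}(\Ii)$ (coming from finiteness of $\Ii$) also bound the numbers of components of $B_i$ and $M_i$.

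After passing to a subsequence, every combinatorial datum stabilizes: the number of components is constant, each $a_i,\alpha_i,\beta_{ij},m_{ik}\in\tfrac{1}{N}\Zz$ converges to a rational limit, and each integer denominator $m$ appearing in a coefficient $\tfrac{m-1+u+kt}{m}$ of $\Dd_{c_i}(\Ii)$ is bounded (since $m\beta_{ij}\in\Zz$ is bounded above while $m-1\le m d_{ij}(c_i)\le m$), so these denominators also stabilize. The identity therefore passes to an affine rational identity of the form $P+Qc=0$ with $P\in\Span_\Qq(\Ii\cup\{1\})$ and $Q\in\Qq$, and if $Q\ne 0$ we obtain $c=-P/Q\in\Span_\Qq(\Ii\cup\{1\})$ as desired. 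The main obstacle is guaranteeing $Q\ne 0$: condition (8) forces a nontrivial $t$-dependence on each $X_i$, but cancellations among several terms with opposite $t$-slopes could, a priori, send $Q$ to zero. I would address this by pre-selecting, for each $i$, a specific component $B_{ij_i}$ (or summand $M_{ik_i}$) whose $t$-slope $k/m$ (or $n$) is nonzero, and whose weight $\beta_{ij_i}$ (or $m_{ik_i}$) is bounded below by $1/N$; replacing the term by its minimum contribution and absorbing the remainder into the rational part $P$ by induction on the number of components, one arranges that the surviving slope $Q$ is nonzero in the limit, completing the proof.
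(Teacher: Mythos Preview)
Your argument has a genuine gap at the first step: the claim that the $X_i$ are uniformly $\epsilon$-lc is not justified, and the reasoning you give for it is incorrect. Knowing that $(X_i, A_i+B_i(c_i)+M_i(c_i))$ is g-klt tells you only that every generalized log discrepancy of the \emph{pair} is positive; it gives no uniform positive lower bound, and it says nothing about discrepancies of $X_i$ along exceptional centres unrelated to $\Supp A_i$. The sentence ``any log discrepancy of $X_i$ below a fixed threshold $\epsilon>0$ would eventually force a coefficient of $A_i$ to drop below $1-\epsilon$'' has the logic inverted: a small value of $a(E,X_i,0)$ for an exceptional $E$ over $X_i$ imposes no constraint on the coefficients of $A_i$. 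If anything, the hypothesis that the coefficients of $A_i$ approach $1$ drives the mld of the pair toward $0$, the opposite of what you need. Without uniform $\epsilon$-lc you cannot invoke BAB, and everything downstream (bounded Cartier index $N$, bounded number of components, stabilisation of discrete data) collapses.

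This is exactly the dichotomy the paper's proof is built around. When the $X_i$ \emph{are} $\epsilon$-lc (Proposition~\ref{prop: pifepsilonlc}), the paper does essentially what you propose: apply BAB, intersect the numerical identity with $H_i^{d-1}$, and read off the conclusion. But the substance of Theorem~P is the complementary case $a_i:=\mld(X_i,A_i+B_i(c_i)+M_i(c_i))\to 0$, handled in Proposition~\ref{prop: nimpliesp}. There one extracts a divisor $E_i$ realising $a_i$ (so that $A_i$ acquires a component of coefficient $1-a_i\to 1$), runs an MMP to reach a Mori fibre space, and then reduces to $(\text{Theorem N})_{d-1}$ either by restricting to a general fibre or, in the Picard-number-one case, by a further argument involving g-lc thresholds, the round-up $\lceil A_i\rceil$, and the Global ACC (Claim~\ref{lem roundupglc} and what follows). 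None of this inductive machinery appears in your proposal, and without it the theorem cannot be proved.

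A secondary remark: even in the bounded case, your treatment of $Q\neq 0$ is not an argument. Once the discrete data stabilise, the $t$-slope of the intersection number is a fixed rational number, and condition~(8) for any single large $i$ already forces it to be nonzero; the ``pre-select a component and absorb the remainder'' step is unnecessary and, as written, not rigorous.
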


     We will prove Theorem N and Theorem P inductively. Before that, we put some additional conditions.
\begin{rem}\label{rem: contralcoeff}
     In Theorem N and Theorem P, we may write $B_i(t):=\sum_l{d_{il}(t)B_{il}}$ and $M_i(t)':=\sum_k\mu_{ik}(t)M_{ik}'$ by definition. Possibly replacing $A_{i}$ and $B_{i}(t)$, we may assume that $d_{ij}(t)$ is not identically one. By \cite[Lemma 3.7]{Nak16}, we may assume that 
     $$\Ii\cap c\Zz_{>0}=\emptyset.$$ 
     By \cite[Claim 3.13]{Nak16}, possibly passing to a subsequence, we may assume that
      \begin{itemize}
          \item  if $d_{il}(t)=\frac{m_{il}-1+u_{il}+k_{il}t}{m_{il}}$, where $m_{il}\in \Zz_{>0},u_{il}\in \sum\Ii$ and $k_{il}\in \Zz$, then $u_{il},k_{il}$ have only finitely many possibilities,
          \item $\epsilon_0<d_{il}(c_i)<1$ for some positive real number $\epsilon_0$ and any $i,l,$
          \item $d_{il}(c)>0$ for any $i,l$, and
          \item the set $\{d_{il}(c),\mu_{ik}(c)\mid i,l,k\}$ satisfy the DCC.
      \end{itemize}
\end{rem}

\begin{lem}\label{rem2}
     Notation as in Theorem N. Then possibly passing to a subsequence, we may assume that $(X_i,(\lceil A_i\rceil +B_i(c))+M_i(c))$ is g-lc.
\end{lem}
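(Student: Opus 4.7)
Plan. The strategy is to show that the g-lc property of $(X_i,(A_i+B_i(c_i))+M_i(c_i))$ survives the small perturbation that replaces $A_i$ by $\lceil A_i\rceil$ and $c_i$ by $c$, combining the uniform decay of the perturbation coefficients with the ACC for generalized lc thresholds (Theorem \ref{thm: ACCforglcts}) after rescaling.

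First I quantify the perturbation. Writing $A_i=\sum_j a_{ij}A_{ij}$ with $a_{ij}\to 1$, the divisor $\lceil A_i\rceil-A_i=\sum_j(1-a_{ij})A_{ij}$ has coefficients tending uniformly to $0$. By Remark \ref{rem: contralcoeff} the integers $k_{il}$ take only finitely many values while $m_{il}\ge 1$, so
\[
|d_{il}(c)-d_{il}(c_i)|\le K|c-c_i|\longrightarrow 0
\]
uniformly in $l$ for $K:=\max_{i,l}|k_{il}|$; a further subsequence yields an analogous uniform bound for $|\mu_{ik}(c)-\mu_{ik}(c_i)|$. Passing to a subsequence stabilizing the sign of $c-c_i$, set
\[
\Delta_i:=(\lceil A_i\rceil-A_i)+(B_i(c)-B_i(c_i))^+,\qquad N_i':=(M_i'(c)-M_i'(c_i))^+,
\]
where $(\cdot)^+$ denotes the effective part. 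Then $\Delta_i\ge 0$, and $N_i'\ge 0$ is nef$/Z$ as a non-negative combination of the nef divisors $M_{ik}'$. Moreover $\lceil A_i\rceil+B_i(c)+M_i(c)\le A_i+B_i(c_i)+\Delta_i+M_i(c_i)+N_i$, the difference being effective; since removing an effective divisor preserves g-lc, it suffices to prove that $(X_i,(A_i+B_i(c_i)+\Delta_i)+(M_i(c_i)+N_i))$ is g-lc for every sufficiently large $i$.

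Next I apply ACC. Let $\delta_i$ be the maximum coefficient of $\Delta_i+N_i$; then $\delta_i\to 0$. By further diagonal subsequencing (monotone selection for $a_{ij}\nearrow 1$, the DCC of Remark \ref{rem: contralcoeff} for the other coefficients, and rescaling of the perturbation by $\delta_i$), one arranges both the coefficients of $A_i+B_i(c_i)+M_i(c_i)$ and of $(\Delta_i+N_i)/\delta_i$ to lie in a common DCC set $\Jj\subseteq[0,1]$ containing $1$. Theorem \ref{thm: ACCforglcts} then places
\[
\lambda_i:=\lct\bigl(X_i,\ A_i+B_i(c_i)+M_i(c_i);\ (\Delta_i+N_i)/\delta_i\bigr)
\]
in an ACC set; the scaling identity $\lct(\cdot;\Delta_i+N_i)=\lambda_i/\delta_i$ reduces the goal to $\lambda_i\ge\delta_i$ for $i\gg 0$.

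The main obstacle is extracting a positive lower bound $\lambda_i\ge\lambda_\star>0$, since an ACC set of positive reals may a priori accumulate at $0$ from above. I handle this by exploiting the normalization $\max\mathrm{coeff}((\Delta_i+N_i)/\delta_i)=1$: passing to a subsequence along which $\lambda_i$ converges inside the ACC set, the uniform presence of a coefficient equal to $1$ in the normalized perturbation forces the limit to be attained in the set and positive; alternatively a hypothetical $\lambda_i\to 0$ produces a sequence of g-lc pairs with DCC boundary coefficients and the numerical triviality $K_{X_i}+A_i+B_i(c_i)+M_i(c_i)\equiv 0$ which, via Theorem \ref{thm: globalACC}, forces the boundary coefficients into a finite set incompatible with $a_{ij}<1$ approaching $1$. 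Once $\lambda_\star>0$ is secured, $\delta_i\to 0$ gives $\delta_i\le\lambda_\star$ eventually, which completes the proof.
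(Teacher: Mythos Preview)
Your argument has genuine gaps at precisely the two points you flag as delicate.

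First, the DCC claims are not established. Remark \ref{rem: contralcoeff} asserts DCC for $\{d_{il}(c),\mu_{ik}(c)\}$, not for $\{d_{il}(c_i),\mu_{ik}(c_i)\}$; since the slopes $k_{il}$ take both signs, some of the $d_{il}(c_i)$ approach $d_{il}(c)$ from above and need not form a DCC set, so the base pair $(X_i,A_i+B_i(c_i)+M_i(c_i))$ does not obviously have DCC coefficients. More seriously, the normalized perturbation $(\Delta_i+N_i)/\delta_i$ has coefficients that are ratios such as $(1-a_{ij})/\delta_i$ or $|k_{il}||c-c_i|/(m_{il}\delta_i)$, and there is no mechanism forcing such ratios of small quantities into a DCC set, even after subsequencing. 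Without DCC on both sides you cannot invoke Theorem \ref{thm: ACCforglcts}.

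Second, even granting that $\lambda_i$ lies in an ACC set, neither of your proposed routes yields a positive lower bound. An ACC set of positive reals may accumulate at $0$; the presence of a coefficient equal to $1$ in the normalized perturbation does not by itself bound the threshold away from $0$. Your alternative via Theorem \ref{thm: globalACC} invokes the numerical triviality $K_{X_i}+A_i+B_i(c_i)+M_i(c_i)\equiv 0$ of the \emph{original} pair, which carries no information about the auxiliary threshold $\lambda_i$; any contradiction you extract from Global ACC (finiteness of coefficients versus $a_{ij}\nearrow 1$) would apply to the sequence regardless of whether $\lambda_i\to 0$, so it cannot be the obstruction to $\lambda_i\to 0$.

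The paper's proof bypasses all of this. Assuming $c_i\searrow c$ and writing $B_i(t)=B_{i0}+t(B_i^+-B_i^-)$, it observes that the intermediate pair $(X_i,(A_i+B_{i0}+cB_i^+-c_iB_i^-)+(M_{i0}+cM_i^+-c_iM_i^-))$ is dominated by the pair at $c_i$, hence g-lc, and that its coefficients genuinely lie in a DCC set. A single application of Theorem \ref{thm: dcc limit glc divisor} then rounds these coefficients up past $\lceil A_i\rceil+B_i(c)+M_i(c)$. No threshold computation or lower-bound extraction is required.
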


\begin{proof}
     We may assume that $c_i$ are decreasing to $c$. Write $B_i(t)=B_{i0}+t(B_{i}^{+}-B_{i}^-)$ and $M_i(t)=M_{i0}+t(M_{i}^{+}-M_{i}^-)$. Note that $(X_i,(A_i+B_{i0}+cB_{i}^+-c_iB_{i}^-)+(M{i0}+cM_{i}^+-c_iM_{i}^-))$ is g-lc. As the coefficients of $B_{i0}+cB_{i}^+-c_iB_{i}^-$ and $M{i0}+cM_{i}^+-c_iM_{i}^-$ belong to a DCC set, and the coefficients of $A_i$ are increasing, by Theorem \ref{thm: dcc limit glc divisor}, possibly passing to a subsequence, we may assume that $(X_i,(\lceil A_i\rceil +B_i(c))+M_i(c))$ is g-lc for any $i.$
\end{proof}

     In the following, by ``(Theorem  N)$_{d}$'' (respectively ``(Theorem  P)$_{d}$''), we mean Theorem N (respectively Theorem P) with dimension $\le d$.
     
\begin{prop}\label{prop1}
     Let $d\ge2$ be an integer. Then $($Theorem N$)_{d-1}$ implies $($Theorem N$)_{d}$ if there exists a component $S_i$ of $\lf A_i\rf$ such that $(K_{X_i}+A_i+B_i(c_i')+M_i(c_i'))|_{S_i}\not\equiv 0$ for any $i$.
\end{prop}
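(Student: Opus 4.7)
The plan is to reduce to dimension $d-1$ by performing generalized adjunction along $S_i$ and invoking $($Theorem N$)_{d-1}$. Since $S_i$ is a component of $\lf A_i\rf$ and $A_i$ is a boundary, the coefficient of $S_i$ in $A_i$ equals $1$. By Lemma \ref{rem2}, after passing to a subsequence we may assume $(X_i,(\lceil A_i\rceil+B_i(c))+M_i(c))$ is g-lc, so we may take a $\Qq$-factorial g-dlt modification to ensure $S_i$ is normal and that generalized adjunction along $S_i$ is well-defined.

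Next I would perform the adjunction and write
\[
K_{S_i}+\Delta_i(t)+M_{S_i}(t)=(K_{X_i}+A_i+B_i(t)+M_i(t))|_{S_i},
\]
then split $\Delta_i(t)=A_{S_i}+B_{S_i}(t)$, where $A_{S_i}$ collects those prime components whose coefficient picks up a contribution from some component of $A_i\setminus S_i$ meeting $S_i$, and $B_{S_i}(t)$ consists of the remaining prime components. Since the coefficients of $A_i$ approach $1$, so do those of $A_{S_i}$ (after passing to a further subsequence). The components of $B_{S_i}(t)$ come only from restrictions of $B_i(t)$, so by the analogue of the preservation lemma for $\Dd_c(\Ii)$ under generalized adjunction we have $B_{S_i}(t)\in\Dd_{c_i}(\Ii)$, and similarly $(S_i'\to S_i, M_{S_i}(t))\in\Dd_{c_i}^{nef}(\Ii)$.

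The numerical conditions transfer by restriction: $K_{X_i}+A_i+B_i(c_i)+M_i(c_i)\equiv 0$ gives $K_{S_i}+A_{S_i}+B_{S_i}(c_i)+M_{S_i}(c_i)\equiv 0$, and the standing hypothesis $(K_{X_i}+A_i+B_i(c_i')+M_i(c_i'))|_{S_i}\not\equiv 0$ gives $K_{S_i}+A_{S_i}+B_{S_i}(c_i')+M_{S_i}(c_i')\not\equiv 0$. Combined with $\lim c_i=c$, the sequence $(S_i,(A_{S_i}+B_{S_i}(t))+M_{S_i}(t))$ satisfies every hypothesis of Theorem N in dimension $\le d-1$, so $($Theorem N$)_{d-1}$ yields $c\in\Span_\Qq(\Ii\cup\{1\})$.

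The main obstacle is justifying the clean separation $\Delta_i(t)=A_{S_i}+B_{S_i}(t)$: on a given prime divisor of $S_i$ the coefficient in the different can mix contributions from both $A_i\setminus S_i$ and $B_i(t)$, and one must verify that the residual part after extracting the ``close to $1$'' contributions still lies in $\Dd_{c_i}(\Ii)$. I would handle this by passing to a log resolution where the support of $A_i+B_i(t)+\sum M_{ij}'+S_i$ is simple normal crossings, and then reading off the coefficients of the different from the explicit generalized adjunction formula: the contribution of each component of $A_i\setminus S_i$ meeting $S_i$ is bundled into $A_{S_i}$, while the combinatorial shape of the remainder stays within $\Dd_{c_i}(\Ii)$ because the coefficients of $B_i(t)$ do.
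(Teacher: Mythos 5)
Your overall strategy --- generalized adjunction to $S_i$ followed by an application of $($Theorem N$)_{d-1}$ --- is the same as the paper's one-line proof. You also correctly flag the delicate point, namely the split $\Delta_i(t)=A_{S_i}+B_{S_i}(t)$, but your resolution of it does not actually close the gap. Passing to a log resolution and ``reading off'' the coefficients from the explicit adjunction formula does not change the fact that each prime divisor $D$ of $S_i$ contributes a \emph{single} real number $\mult_D\Delta_i(t)=\frac{m-1+\sum_k a_k b_k(t)}{m}$ to the different; if the $b_k(t)$ include both an $A_i$-coefficient $1-\epsilon_i$ and a $B_i(t)$-coefficient $d_{il}(t)\in\Dd_{c_i}(\Ii)$, then the resulting number is neither $t$-independent and tending to $1$ (so it cannot go into $A_{S_i}$), nor of the shape required by $\Dd_{c_i}(\Ii)$ (so it cannot go into $B_{S_i}(t)$). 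Your asserted step ``since the coefficients of $A_i$ approach $1$, so do those of $A_{S_i}$'' is precisely the statement that must be proved, and as written you only assert it.

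The missing ingredient is Remark~\ref{rem: contralcoeff}: after passing to a subsequence, the coefficients $d_{il}(c_i)$ of $B_i(c_i)$ satisfy $d_{il}(c_i)>\epsilon_0$ for a fixed $\epsilon_0>0$. Combine this with the boundary constraint $\mult_D\Delta_i(c_i)\le1$, i.e.\ $\sum_k a_k b_k(c_i)\le1$ with $a_k\in\Zz_{\ge0}$. Once the coefficients of $A_i$ exceed $1-\min(\epsilon_0,1/2)$, a prime divisor $D$ of $S_i$ cannot pick up both an $A_i$-contribution and a $B_i(c_i)$- or further $A_i$-contribution: two such terms would force $\sum a_k b_k(c_i)>1$. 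Hence for $i$ large every $D$ receives either exactly one $A_i$-contribution (giving the $t$-independent coefficient $\frac{m-1+b}{m}$ with $b\to1$, so $D$ goes into $A_{S_i}$) or only $B_i(t)$-contributions, in which case the paper's adjunction lemma for $\Dd_c(\Ii)$ applies to the subboundary $S_i+B_i(t)$ and gives $\mult_D B_{S_i}(t)\in\Dd_{c_i}(\Ii)$. With this observation in hand --- and noting also that the paper's adjunction lemma as stated requires the whole boundary to lie in $\Dd_c(\Ii)$, so it must be applied componentwise as above rather than directly to $A_i+B_i(t)$ --- the rest of your reduction to dimension $d-1$ goes through.
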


\begin{proof}
     The proposition follows by generalized adjunction.
\end{proof}

\begin{prop}\label{prop2}
     Let $d\ge2$ be an integer. Then $($Theorem N$)_{d-1}$ implies $($Theorem N$)_{d}$ if there exists a Mori fiber space $X_{i}\to Z_{i}$ with $\dim Z_{i}>0$ and $\Supp A_{i}$ dominates $Z_{i}$ for any $i$.
\end{prop}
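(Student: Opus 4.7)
The plan is to reduce the dimension by restricting the given sequence to general fibers of the Mori fiber spaces $X_i \to Z_i$, and then to invoke the induction hypothesis (Theorem N)$_{d-1}$.

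For each $i$, choose a general fiber $F_i$ of $\pi_i \colon X_i \to Z_i$. Since $\dim Z_i > 0$, we have $\dim F_i \le d-1$. Because $\Supp A_i$ dominates $Z_i$, the restriction $A_i|_{F_i}$ is non-zero, and its coefficients are bounded below by those of $A_i$, so they still approach $1$. After possibly replacing $X_i'$ by a higher resolution, let $F_i' \subseteq X_i'$ denote the strict transform of $F_i$. Restricting yields a $\Qq$-factorial projective generalized pair
\[
\bigl(F_i,\; (A_i|_{F_i} + B_i(t)|_{F_i}) + M_i(t)|_{F_i}\bigr)
\]
with data $F_i' \to F_i$ and nef part $M_i'(t)|_{F_i'}$. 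Since restriction does not alter the coefficients of components or the weights of nef divisors, $B_i(t)|_{F_i} \in \Dd_{c_i}(\Ii)$ and $(F_i' \to F_i, M_i(t)|_{F_i}) \in \Dd_{c_i}^{nef}(\Ii)$ automatically. Restricting $K_{X_i} + A_i + B_i(c_i) + M_i(c_i) \equiv 0$ to $F_i$ (with $K_{F_i} = K_{X_i}|_{F_i}$ by adjunction to a general fiber) gives numerical triviality of the restricted pair at $t=c_i$, and g-lc is preserved for a general fiber.

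The main obstacle is verifying hypothesis (7) for the restricted sequence. Write the one-parameter family on $X_i$ as
\[
K_{X_i} + A_i + B_i(t) + M_i(t) \equiv (t - c_i)\,E_i,
\]
where $E_i$ is a well-defined numerical class on $X_i$, non-zero by hypothesis (7) applied on $X_i$. Split into two sub-cases. If $E_i|_{F_i} \not\equiv 0$, then any $c_i'' \neq c_i$ witnesses condition (7) for the restricted family on $F_i$, so (Theorem N)$_{d-1}$ applied to $\{(F_i, (A_i|_{F_i}+B_i(t)|_{F_i})+M_i(t)|_{F_i})\}$ yields $c \in \Span_{\Qq}(\Ii \cup \{1\})$. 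If instead $E_i|_{F_i} \equiv 0$ for a general fiber, then since $\rho(X_i/Z_i) = 1$, $E_i$ is numerically the pullback of a non-zero class from $Z_i$; applying the canonical bundle formula for generalized pairs to $\pi_i$ descends the full one-parameter family to a sequence of $\Qq$-factorial projective generalized pairs on $Z_i$ of dimension $\le d-1$ satisfying the hypotheses of Theorem N with the same limit $c$, and (Theorem N)$_{d-1}$ again forces $c \in \Span_{\Qq}(\Ii \cup \{1\})$. The delicate part will be keeping track of the linear-functional coefficient structure $\Dd_{c_i}(\Ii), \Dd_{c_i}^{nef}(\Ii)$ under the canonical bundle formula; this is expected by the functoriality of the discriminant and moduli parts, but is the principal technical step of the proof.
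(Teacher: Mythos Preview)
Your treatment of the first sub-case (when the one-parameter family stays numerically nontrivial on a general fiber $F_i$) is correct and is exactly what the paper does: restrict to $F_i$, observe that the coefficient data $\Dd_{c_i}(\Ii)$ and $\Dd_{c_i}^{nef}(\Ii)$ are preserved, and invoke (Theorem N)$_{d-1}$.

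The second sub-case, however, has a real gap. You propose to descend the family to $Z_i$ via the canonical bundle formula for generalized pairs. The problem is precisely the point you flag as ``delicate'': the discriminant part of the canonical bundle formula has coefficients of the form $1-\lct$, where the thresholds depend on the coefficients of $A_i+B_i(t)$ in a way that is at best piecewise affine in $t$, not globally affine, and the resulting values are not of the shape $\frac{m-1+u+kt}{m}$ with $u\in\sum\Ii$, $k\in\Zz$. So the descended boundary does not lie in $\Dd_{c_i}(\Ii)$, and the ``coefficients approaching $1$'' structure of $A_i$ is lost as well. There is no available functoriality statement that fixes this; the canonical bundle formula simply does not interact well enough with the $\Dd_{c_i}(\Ii)$ formalism for this argument to go through.

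The paper handles this sub-case by a completely different mechanism. Using Lemma \ref{rem2} one may assume the restricted pair $(F_i,(A_{F_i}+B_{F_i}(c))+M_{F_i}(c))$ is g-lc; since it is also numerically trivial and the coefficients of $A_{F_i}$ approach $1$ while all other coefficients lie in a fixed DCC set, the Global ACC (Theorem \ref{thm: globalACC}) forces, after passing to a subsequence, $A_i=\lceil A_i\rceil$. One then picks a component $S_i\subseteq\lfloor A_i\rfloor$ dominating $Z_i$; because $E_i$ is numerically pulled back from $Z_i$ and $S_i\to Z_i$ is surjective, the family is nontrivial on $S_i$. Now one applies Proposition \ref{prop1}, which uses \emph{generalized adjunction} to $S_i$ --- and adjunction, unlike the canonical bundle formula, does preserve the class $\Dd_{c_i}(\Ii)$ by the lemma immediately preceding the definition of $\mathfrak{L}_d(\Ii)$. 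This is the missing idea: replace descent to the base by adjunction to a horizontal component of $\lfloor A_i\rfloor$, after using Global ACC to guarantee such a component exists.
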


\begin{proof}
     Let $F_i$ be a general fiber of $g_i:X_{i}\to Z_{i}$. Then restricting to $F_i$ gives 
     $$K_{F_i}+A_{F_i}+B_{F_i}(t)+M_{F_i}(t):=(K_{X_i}+A_i+B_i(t)+M_i(t))|_{F_i},$$
     and $(F_i, (A_{F_i}+B_{F_i}(c_i))+M_{F_i}(c_i))$ is a g-lc pair of dimension $\le d-1$.
     By Lemma \ref{rem2}, we may assume that $(F_i, (A_{F_i}+B_{F_i}(c))+M_{F_i}(c))$ is g-lc.
     
     If $K_{F_i}+A_{F_i}+B_{F_i}(c_{i}')+M_{F_i}(c_{i}')\not\equiv0$, then we are done by assumption. Suppose that $K_{F_i}+A_{F_i}+B_{F_i}(c_{i}')+M_{F_i}(c_{i}')\equiv0$, then $K_{F_i}+A_{F_i}+B_{F_i}(c)+M_{F_i}(c)\equiv0$. By Theorem \ref{thm: globalACC}, possibly passing to a subsequence, we have $\lf A_i\rf=A_i$ for any $i$. In particular, there exists a component $S_i$ of $A_i$ such that $g_i(S_i)=Z_i$. It follows that $(K_{X_i}+A_i+B_i(c)+M_i(c))|_{S_i}\not\equiv0$. Hence we are done by Proposition \ref{prop1}.
\end{proof}
     
\begin{prop}\label{prop:pandnimplyn}
     Let $d\ge2$ be an integer. Then $($Theorem P$)_{d}$ and $($Theorem N$)_{d-1}$ imply $($Theorem N$)_{d}$. 
\end{prop}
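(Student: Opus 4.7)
The plan is to reduce (Theorem N)$_d$ to either (Theorem P)$_d$ or (Theorem N)$_{d-1}$ by running an MMP to produce a Mori fiber space, and then splitting into cases according to the dimension of the base and the behavior of $A_i$. By Remark \ref{rem: contralcoeff} and Lemma \ref{rem2}, after passing to a subsequence I may assume the combinatorial data in $B_i(t)$ and $M_i(t)$ is constant, $(X_i,\lceil A_i\rceil+B_i(c)+M_i(c))$ is g-lc, and, after passing to a $\Qq$-factorial g-dlt modification, that $X_i$ is $\Qq$-factorial and $(X_i, A_i+B_i(c_i)+M_i(c_i))$ is g-dlt.

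Since $K_{X_i}+A_i+B_i(c_i)+M_i(c_i)\equiv 0$ while $K_{X_i}+A_i+B_i(c_i')+M_i(c_i')\not\equiv 0$ for some $c_i'$, the $t$-linear divisor $K_{X_i}+A_i+B_i(t)+M_i(t)$ is non-constant in $t$, so I will choose $t_i$ arbitrarily close to $c_i$ on one side such that $K_{X_i}+A_i+B_i(t_i)+M_i(t_i)$ is not pseudo-effective. I then run an MMP on $K_{X_i}+A_i+B_i(t_i)+M_i(t_i)$ with scaling of a general ample divisor. Because $K_{X_i}+A_i+B_i(c_i)+M_i(c_i)\equiv 0$, every extremal ray contracted along this MMP is $(K_{X_i}+A_i+B_i(c_i)+M_i(c_i))$-trivial by linearity in $t$, so the MMP preserves g-lc-ness, numerical triviality at $t=c_i$, and the $\Dd_{c_i}(\Ii)$- and $\Dd_{c_i}^{nef}(\Ii)$-structures. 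It terminates with a Mori fiber space $\pi_i:Y_i\to Z_i$.

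If $\dim Z_i=0$, then $Y_i$ is a $\Qq$-factorial Fano variety of Picard number $1$; since $\lfloor A_i\rfloor=0$, a small perturbation of $A_i$ produces a g-klt pair with $K+A+B+M\equiv 0$, and (Theorem P)$_d$ applies to give $c\in\Span_\Qq(\Ii\cup\{1\})$. If $\dim Z_i>0$ and $\Supp A_i$ dominates $Z_i$, Proposition \ref{prop2} yields the conclusion directly. Otherwise $\dim Z_i>0$ and $A_i$ is vertical over $Z_i$; since every vertical prime divisor on a Mori fiber space maps to a prime divisor on the base (the image cannot have codimension $\ge 2$ in $Z_i$ for dimension reasons), the generalized canonical bundle formula for $\pi_i$ descends the pair to a g-lc pair $(Z_i,B_{Z_i}(c_i)+M_{Z_i}(c_i))$ with $K_{Z_i}+B_{Z_i}(c_i)+M_{Z_i}(c_i)\equiv 0$ and $\dim Z_i<d$, whose discriminant coefficients at the images of the components of $A_i$ approach $1$ and play the role of an $A$-part; so (Theorem N)$_{d-1}$ applies.

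The hardest step will be verifying that the canonical bundle formula descent preserves the $\Dd_{c_i}(\Ii)$- and $\Dd_{c_i}^{nef}(\Ii)$-structures on $Z_i$, and that the discriminant coefficients derived from the vertical $A_i$ really approach $1$ in the sense required by (Theorem N)$_{d-1}$. This is a technical analysis of how the moduli and discriminant parts of the canonical bundle formula vary as linear functions of $t$, which fits the framework of $\Dd_c(\Ii)$ but demands careful coefficient bookkeeping in the spirit of \cite{Bir19, Nak16}.
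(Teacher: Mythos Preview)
Your outline has two genuine gaps.

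\textbf{The case $\dim Z_i=0$.} You assert $\lfloor A_i\rfloor=0$ on the Mori fibre space, but this is not justified. If the original $(X_i,A_i+B_i(c_i)+M_i(c_i))$ is not g-klt, your g-dlt modification creates exceptional divisors with coefficient $1$, which land in $\lfloor A_i\rfloor$. Your MMP on $K+A+B(t_i)+M(t_i)$ is, as you correctly note, $(K+A+B(c_i)+M(c_i))$-trivial, so it preserves all log discrepancies at $t=c_i$; in particular it cannot turn a non--g-klt pair into a g-klt one. If a component of $\lfloor A_i\rfloor$ survives you have not said what to do; if all of them are contracted the resulting pair on $Y_i$ still has g-lc centres with discrepancy $0$ and is therefore not g-klt, so (Theorem~P)$_d$ does not apply. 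Your proposed ``small perturbation of $A_i$'' would destroy the numerical triviality $K+A+B(c_i)+M(c_i)\equiv 0$. The paper avoids this by splitting at the outset into the g-klt case (where (Theorem~P)$_d$ is eventually invoked) and the non--g-klt case, in which one runs an MMP on $-\lfloor A_i\rfloor$ instead; this forces a component of $\lfloor A_i\rfloor$ to survive and to dominate the base of the resulting Mori fibre space, so that when $\dim Z_i=0$ one finishes by adjunction via Proposition~\ref{prop1} rather than by (Theorem~P)$_d$.

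\textbf{The vertical $A_i$ case via the canonical bundle formula.} Descending to $Z_i$ is neither needed nor, in this framework, feasible. Your claim that every vertical prime divisor on a Mori fibre space maps to a divisor on the base is false in general (fibres can jump in dimension), and more seriously there is no reason the discriminant and moduli parts of the canonical bundle formula lie in $\Dd_{c_i}(\Ii)$ and $\Dd_{c_i}^{nef}(\Ii)$: the discriminant coefficients are lc thresholds, not of the form $\frac{m-1+u+kt}{m}$, and the moduli part need not be b-Cartier. You flag this as ``the hardest step'' but do not resolve it. The paper never descends to the base; when $\dim Z_i>0$ it simply restricts to a general fibre $F_i$, which preserves the $\Dd$-structures trivially and gives a pair of dimension $\le d-1$ to which (Theorem~N)$_{d-1}$ applies. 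In your own setup this also works: the Mori fibre contraction is $(K+A+B(t_i)+M(t_i))$-negative, so the restriction to $F_i$ is not numerically trivial at $t_i$, and condition~(7) holds on $F_i$ regardless of whether $A_i$ is horizontal or vertical.
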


\begin{proof}
     Possibly passing to a subsequence, we may assume that $\dim X_i=d$.      
     Suppose that $(X_i,A_i+B_i(c_i)+M_i(c_i))$ is not g-klt for any $i$. Possibly replacing $(X_i,(A_i+B_i(c_i))+M_i(c_i))$ by a g-dlt model, we may assume that $(X_i,(A_i+B_i(c_i))+M_i(c_i))$ is $\Qq$-factorial g-dlt, and $\lf A_i\rf=\lf A_i+B_i(c_i)\rf\neq0$. We may run a g-MMP on $(K_{X_i}+A_i+B_i(c_i)+M_i(c_i)-\lf A_i\rf)$ with scaling of an ample divisor,
     $$X_i:=X_i^{(0)}\dto X_i^{(1)}\dto\cdots\dto X_i^{(m_i)}\to Z_i$$
     which terminates with a Mori fiber space $X_i^{(m_i)}\to Z_i$, as $K_{X_i}+A_i+B_i(c_i)+M_i(c_i)-\lf A_i\rf\equiv -\lf A_i\rf$ is not pseudo-effective. Let $A_i^{(j)},B_i^{(j)}(t)$ and $M_i^{(j)}(t)$ be the strict transforms of $A_i,B_i(t)$ and $M_i(t)$ on $X_i^{(j)}$ respectively, and $D_i^{(j)}:=K_{X_i^{(j)}}+A_i^{(j)}+B_i^{(j)}(c_i')+M_i^{(j)}(c_i')$ for any $1\le j\le m_i$. 
     
     Assume that there exists an integer $1\le l_i\le m_i-1$, such that $D_i^{(j)}\not\equiv0$ for any $1\le j\le l_i$, and $D_i^{(l_i+1)}\equiv0.$ Then $f_i^{(l_i)}:X_i^{(l_i)}\to X_i^{(l_i+1)}$ is a divisorial contraction. Let $E_i$ be the $f_i^{(l_i)}$-exceptional divisor. Since $D_i^{(l_i)}\not\equiv0,$ $D_i^{(l_i)}-(f_i^{(l_i)})^*D_i^{(l_i)+1}=\alpha_i E_i$ for some non-zero real number $\alpha_i$. Since $f_i^{(l_i)}$ is $\lf A_i^{(l_i)}\rf$-positive, there exists a component $S_i$ of $\Supp \lf A_i^{(l_i)}\rf$ such that $E_i|_{S_i}\not\equiv0$ and we are done by Proposition \ref{prop1}.
     
     We may assume that $D_i^{(m_i)}\not\equiv0$. Possibly replacing $X_i$ by ${X}_i^{(m_i)}$, and $A_i,B_i(t),M_i(t)$ by its strict transforms respectively, we may assume that $X_i$ admits a Mori fiber space $g_i:X_i\to Z_i$. Note that $\Supp A_i$ dominates $Z_i$ as $g_i$ is $\lf A_i\rf$-positive. If $\dim Z_i>0$, we are done by Proposition \ref{prop2}. 
     If $\dim Z_i=0$, then we are done since then $X_i$ has Picard number one and then $(K_{X_i}+A_i+B_i(c_i')+M_i(c_i'))|_{T_i}\not\equiv0$ for any component $T_i$ of $\Supp \lf A_i\rf$.

     \medskip

     Now assume that $(X_i,(A_i+B_i(c_i))+M_i(c_i))$ is g-klt for any $i$. It follows that $({X_i},A_i+B_i(c_i'')+M_i(c_i''))$ is g-klt and $K_{X_i}+A_i+B_i(c_i'')+M_i(c_i'')$ is not pseudo-effective for some positive real number $c_i''$. We may run a g-MMP on $(K_{X_i}+A_i+B_i(c_i'')+M_i(c_i''))$ with scaling of an ample divisor which terminates with a Mori fiber space $h_i:\tilde{X}_i\to V_i$. Since every step of the g-MMP is $(K_{X_i}+A_i+B_i(c_i'')+M_i(c_i''))$-negative and $K_{X_i}+A_i+B_i(c_i'')+M_i(c_i'')$ is not pseudo-effective, $(K_{X_i}+A_i+B_i(c_i'')+M_i(c_i''))\not\equiv0$. 
     We may replacing $X_i$ by $\tilde{X}_i$, and $A_i,B_i(t),M_i(c_i'')$ by its strict transforms respectively, and therefore assume that $X_i$ admits a Mori fiber space $h_i:X_i\to V_i$.
     
     If $\dim V_i>0$, let $F_k$ be a general fiber of $h_i$. 
     As $h_i$ is $(K_{X_i}+A_i+B_i(c_i'')+M_i(c_i''))$-negative, 
     $(K_{X_i}+A_i+B_i(c_{i}'')+M_i(c_i''))|_{F_i}\not\equiv0.$ 
     Then we are done by restricting to $F_{i}$. If $\dim V_i=0$, then $X_i$ has Picard number one and the statement holds.
\end{proof}

\begin{prop}\label{prop: pifepsilonlc}
     Let $\epsilon$ be a positive real number and $d$ a positive integer. Then $($Theorem P$)_d$ holds if we additionally assume that $X_{i}$ is $\epsilon$-lc for every $i$. In particular, $($Theorem N$)_1$ and $($Theorem P$)_1$ hold. 
\end{prop}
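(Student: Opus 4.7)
The plan is to combine the Birkar--Borisov--Alexeev--Borisov theorem with the Picard number one assumption, so that the single equation $K_{X_i}+A_i+B_i(c_i)+M_i(c_i)\equiv 0$ collapses to a scalar linear identity in $c_i$ with rational coefficients from which one can read off $c$.

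Under the $\epsilon$-lc hypothesis the varieties $X_i$ form a bounded family by BAB. Choosing uniformly in $i$ a very ample Cartier divisor $H_i$ on $X_i$ of bounded degree, every Weil divisor on $X_i$ has bounded Cartier index and every pairing $H_i^{d-1}\cdot D$ lies in a discrete subset of $\Qq$. By Remark~\ref{rem: contralcoeff} the affine-linear functions $d_j(t)$ and $\mu_j(t)$ form a finite collection, so after passing to a subsequence I may assume that the number of components, the functions $d_j(t)=d_{j,0}+r_jt$ and $\mu_j(t)=\mu_{j,0}+s_jt$, and all intersection numbers $H_i^{d-1}\cdot K_{X_i}$, $H_i^{d-1}\cdot A_{ij}$, $H_i^{d-1}\cdot B_{ij}$, $H_i^{d-1}\cdot M_{ij}$ are independent of $i$. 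Here $d_{j,0},\mu_{j,0}\in\Span_{\Qq}(\Ii\cup\{1\})$ and $r_j,s_j\in\Qq$.

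Since $\rho(X_i)=1$, numerical equivalence on $X_i$ is detected by intersection with $H_i^{d-1}$, so intersecting the relation $K_{X_i}+A_i+B_i(c_i)+M_i(c_i)\equiv 0$ with $H_i^{d-1}$ produces a linear equation
\[ P_i + Q\,c_i = 0, \]
in which $Q=\sum_j r_j\beta_j+\sum_j s_j\gamma_j\in\Qq$ is a fixed rational and $P_i\to P\in\Span_{\Qq}(\Ii\cup\{1\})$ as the coefficients $a_{ij}$ of $A_i$ tend to $1$. Comparing this with the analogous intersection coming from condition~(8) gives $Q(c_i'-c_i)\neq 0$ for some $c_i'\neq c_i$, forcing $Q\neq 0$. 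Taking the limit then yields $c=-P/Q\in\Span_{\Qq}(\Ii\cup\{1\})$, which proves Theorem~P$_d$ under the additional $\epsilon$-lc assumption. The main obstacle is the potential vanishing of $Q$: should this occur, then by $\rho(X_i)=1$ the divisor class $B_{i,1}+M_{i,1}$ is numerically trivial, so $(X_i,A_i+B_i(c)+M_i(c))$ is numerically trivial and (by Lemma~\ref{rem2}) g-lc, and one then applies the global ACC (Theorem~\ref{thm: globalACC}) to force the coefficients $a_{ij}$ into a fixed finite subset of $\Ii$, from which one extracts enough finiteness to recover $c$.

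For the ``in particular'' claim, in dimension one every g-klt Fano variety of Picard number one is $\Pp^1$, which is smooth and hence $\epsilon$-lc for every $\epsilon$, so Theorem~P$_1$ is immediate from what we just proved. For Theorem~N$_1$, every smooth projective curve automatically has Picard number one, and effectivity of $A_i+B_i(c_i)+M_i(c_i)$ together with the numerical relation excludes genus $\ge 2$; the genus-zero case reduces to Theorem~P$_1$, and in the genus-one case numerical triviality forces $\deg A_i=\deg B_i(c_i)=\deg M_i(c_i)=0$, so some $d_j(c_i)=0$ or $\mu_j(c_i)=0$, which pins $c_i$ to a fixed element of $\Span_{\Qq}(\Ii\cup\{1\})$ lying in a finite set and hence equal to $c$ for large $i$.
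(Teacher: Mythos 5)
Your overall strategy coincides with the paper's: invoke the Birkar--Borisov--Alexeev--Borisov theorem so that the $X_i$ lie in a bounded family, fix a very ample $H_i$ of bounded degree, intersect the numerical triviality relation with $H_i^{d-1}$, and exploit $\rho(X_i)=1$ so that this single scalar equation detects numerical equivalence. The condition (8) forcing the slope $Q$ to be nonzero is also the right observation. So the route is essentially the same one the paper takes.

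That said, there are two points where the proposal does not quite close. First, the assertion that ``by Remark \ref{rem: contralcoeff} the affine-linear functions $d_j(t)$ and $\mu_j(t)$ form a finite collection'' is not justified: the Remark only says that the data $u_{il}$ and $k_{il}$ run through finite sets, while the denominators $m_{il}\in\Zz_{>0}$ in $d_{il}(t)=\frac{m_{il}-1+u_{il}+k_{il}t}{m_{il}}$ are unconstrained. So the collection of possible $d_{il}(t)$ is in general infinite, and you cannot simply pass to a subsequence where $Q$ is a fixed rational; your ``take the limit to get $c=-P/Q$'' step therefore has a gap. This is exactly the point the paper hides behind its citation of Remark \ref{rem: contralcoeff}: one has to combine the DCC of $\{d_{il}(c),\mu_{ik}(c)\}$ with the discreteness of the degree pairings and the nonvanishing of the slope to conclude $c_i=c$ for $i\gg 0$ (from which $c\in\Span_{\Qq}(\Ii\cup\{1\})$ then follows). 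Second, a simplification you miss: the paper first reduces to $A_i=0$. Since the pairs are g-klt and numerically trivial, Theorem \ref{thm: globalACC} puts the coefficients of $A_i$ in a finite set, but they approach $1$, and g-klt-ness forbids the value $1$; hence $A_i=0$ for $i\gg 0$. Carrying $A_i$ throughout and re-invoking Theorem \ref{thm: globalACC} in a separate ``$Q=0$'' branch is not wrong, but the branch is redundant once $Q\neq 0$ is derived from condition (8). Finally, in dimension one your genus-one branch never arises: degree considerations force $A_i=B_i(c_i)=0$ and $M_i(c_i)\equiv 0$ there, and then (using $d_{il}(c_i)>\epsilon_0$) one sees $B_i=0$ and $M_i\equiv 0$, contradicting condition (7). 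So one only has $X_i=\Pp^1$, as the paper states, and the $\epsilon$-lc case applies directly.
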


\begin{proof}
     Possibly passing to a subsequence, we may assume that and $\dim X_i=d$ and $A_{i}=0$. By \cite[Theorem 1.1]{Bir16b}, $X_i$ belongs to a bounded family. In particular, there is a very ample divisor $H_i$ on $X_i$, such that $H_i^d$ and $-K_{X_i}\cdot H_i^{d-1}$ are bounded from above. 
     By the assumption, we have
     $$(K_{X_i}+B_i(c_i)+M_i(c_i))\cdot H_i^{d-1}=0$$
     and $(B(c_i)+M(c_i))\cdot H_i^{d-1}\neq( B(c_i')+M(c_i'))\cdot H_i^{d-1}$
     which imply that $c_i=c$ for $i$ sufficiently large by Remark \ref{rem: contralcoeff}.
         
    If $d=1$, then $X_i$ is $\Pp^1$ for any $i$. Thus $($Theorem N$)_1$ and $($Theorem P$)_1$ hold.
\end{proof}

\begin{prop} \label{prop: nimpliesp}
     Let $d\ge2$ be an integer. Then $($Theorem N$)_{d-1}$ implies $($Theorem P$)_d$.
\end{prop}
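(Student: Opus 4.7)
The plan is to prove $($Theorem P$)_d$ by contradiction, reducing the problem to $($Theorem N$)_{d-1}$ via divisorial extraction, the MMP on generalized pairs, and generalized adjunction. Suppose a sequence $\{(X_i,(A_i+B_i(t))+M_i(t))\}$ violates $($Theorem P$)_d$ with $c=\lim c_i\notin\Span_\Qq(\Ii\cup\{1\})$. If, after passing to a subsequence, $X_i$ is $\epsilon$-lc for a uniform $\epsilon>0$, Proposition \ref{prop: pifepsilonlc} already yields a contradiction, so I may assume $\mld(X_i)\to 0$ and fix exceptional divisors $E_i$ over $X_i$ with $a(E_i,X_i)\to 0$. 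Since $X_i$ is klt and of Fano type, a standard MMP argument produces a $\Qq$-factorial divisorial extraction $\pi_i\colon Y_i\to X_i$ with $\Exc(\pi_i)=E_i$. Putting $a_i:=a(E_i,X_i,A_i+B_i(c_i)+M_i(c_i))\in(0,a(E_i,X_i)]$ (so $a_i\to 0$) and $A'_{Y_i}:=A_{Y_i}+(1-a_i)E_i$, the identity
\[
K_{Y_i}+A'_{Y_i}+B_{Y_i}(c_i)+M_{Y_i}(c_i)=\pi_i^{*}(K_{X_i}+A_i+B_i(c_i)+M_i(c_i))\equiv 0
\]
holds, $\rho(Y_i)=2$, and the coefficients of $A'_{Y_i}$ still approach $1$.

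Next, fix a small $\delta>0$ and run a g-MMP on $K_{Y_i}+A'_{Y_i}+B_{Y_i}(c_i)+M_{Y_i}(c_i)-\delta E_i\equiv -\delta E_i$ with scaling of an ample divisor. Since this class is $(-\delta E_i)$-proportional and $E_i$ is effective, every extremal ray $R$ of the MMP satisfies $R\cdot E_i>0$; hence $E_i$ is never contracted, and the program terminates with a Mori fiber space $\phi_i\colon Y_i'\to V_i$ on which the strict transform of $E_i$ dominates $V_i$. If $\dim V_i>0$ for infinitely many $i$, restrict via generalized adjunction to a general fiber $F_i$: the pair $(F_i,A'_{F_i}+B_{F_i}(c_i)+M_{F_i}(c_i))$ is g-lc of dimension $\le d-1$, satisfies $K_{F_i}+A'_{F_i}+B_{F_i}(c_i)+M_{F_i}(c_i)\equiv 0$ while $K_{F_i}+A'_{F_i}+B_{F_i}(c_i')+M_{F_i}(c_i')\not\equiv 0$ for a perturbed $c_i'$ (by the $(-\delta E_i)$-negativity of $\phi_i$), and inherits from $E_i|_{F_i}$ a prime component of coefficient $1-a_i\to 1$; the remaining hypotheses of $($Theorem N$)_{d-1}$, notably the $\Dd_{c_i}(\Ii)$- and $\Dd_{c_i}^{nef}(\Ii)$-structure, are preserved under adjunction. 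Applying $($Theorem N$)_{d-1}$ yields $c\in\Span_\Qq(\Ii\cup\{1\})$, contradicting the assumption.

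The main obstacle is the case $\dim V_i=0$, in which $Y_i'$ is again Fano of Picard number one, now carrying a new prime boundary component $E_i'$ of coefficient $1-a_i-\delta\to 1$, so that $(Y_i',(A_{Y_i'}+(1-a_i-\delta)E_i'+B_{Y_i'}(t))+M_{Y_i'}(t))$ again fits the setting of $($Theorem P$)_d$. My strategy here is to iterate the extraction step on $Y_i'$, each iteration enlarging the $A$-part by one further prime component of near-$1$ coefficient. Since on a Fano rank-one variety the full boundary is numerically equivalent to $-K$ and the coefficients inherited from $\Dd_{c_i}(\Ii)$ lie in a DCC set bounded away from $0$, intersecting with a generator of the Picard group provides an a priori upper bound on the number of such prime components; thus the iteration must terminate after finitely many steps, at which point only the case $\dim V_i>0$ of the previous paragraph can occur, yielding the desired contradiction.
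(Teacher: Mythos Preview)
Your opening moves---extracting $E_i$ with $a(E_i,X_i,A_i+B_i(c_i)+M_i(c_i))\to 0$, absorbing $(1-a_i)E_i$ into $A$, and running an MMP to a Mori fibre space on which $E_i$ dominates---match the paper, and the case $\dim V_i>0$ is essentially Proposition~\ref{prop2} (though your parenthetical ``by the $(-\delta E_i)$-negativity of $\phi_i$'' does not by itself verify condition~(7) of Theorem~N on the fibre; that is exactly where the Global ACC argument of Proposition~\ref{prop2} is needed).

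The genuine gap is the $\dim V_i=0$ case. Your termination claim for the iteration rests on ``intersecting with a generator of the Picard group provides an a priori upper bound on the number of such prime components'', but that bound has the shape $(-K\cdot H^{d-1})/\epsilon_0$, and both $-K$ and the ample generator $H$ change at every pass through the loop; nothing you have said controls the anticanonical degree along the procedure. Worse, each pass that returns to Picard rank one extracts one divisor and contracts another, so the component count need not strictly increase, and even if it does there is no uniform ceiling. The paper avoids iteration entirely: once on a Picard-rank-one $X_i$ with $A_i\neq 0$, it first shows via Global ACC (Claim~\ref{lem roundupglc}) that $K_{X_i}+A_i+B_i(c)+M_i(c)$ is not ample, and then replaces $A_i$ by $\lceil A_i\rceil$ in one stroke. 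Either $(X_i,\lceil A_i\rceil+B_i(c_i)+M_i(c_i))$ fails to be g-lc, producing a g-lc threshold $e_i\to c$ in $\mathfrak{L}_d(\Ii)\subseteq\mathfrak{N}_{d-1}(\Ii)$; or it is g-lc, and the threshold $\alpha_i$ solving $K_{X_i}+\lceil A_i\rceil+B_i(\alpha_i)+M_i(\alpha_i)\equiv 0$ is squeezed by Global ACC into $[c,c_i)$, so that $(X_i,\lceil A_i\rceil+B_i(\alpha_i)+M_i(\alpha_i))$ is g-lc but not g-klt and Proposition~\ref{prop1} applies directly. This round-up trick is the missing idea.
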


\begin{proof}
     We may assume that $c_i$ is decreasing and $\dim X_i=d$. Let 
     $$a_i:=\mld(X_i,A_i+B_i(c_i)+M_i(c_i))=a(E_i,X_i,A_i+B_i(c_i)+M_i(c_i))$$ 
     for some prime divisor $E_i$ over $X_i$. By Proposition \ref{prop: pifepsilonlc}, we may assume that $1>a_i$ is decreasing, and $\lim_{i\to+\infty} a_i=0$.
     
     We first reduce to the case when $A_{i}\neq0$. Possibly replacing $X_i'$ by a higher model, there exists a morphism $\phi_i:\tilde{X}_i\to X_i$ extracting $E_i$. Write 
     $$K_{\tilde{X}_i}+(1-a_i)E_i+\tilde{A}_i+\tilde{B}_i(c_i)+\tilde{M}_i(c_i)=\phi_i^*(K_{X_i}+A_i+B_i(c_i)+M_i(c_i)),$$ 
     where $\tilde{A}_i,\tilde{B}_i(t)$ and $\tilde{M}_i(t)$ are the strict transforms of $A_i,B_i(t)$ and $M_i'(t)$ on $\tilde{X}_i$ respectively. Possibly replacing $X_i,A_i,B_i(t)$ and $M_i(t)$ by $\tilde{X}_i,(1-a_i)E_i+\tilde{A}_i,\tilde{B}_i(t)$ and $\tilde{M}_i(t)$ respectively, we may assume that $A_i\neq 0$. Note that $X_{i}$ may has Picard number $\ge2$.
     
     Since $K_{X_{i}}+B_{i}(c_{i})+M_{i}(c_i)\equiv -A_{i}$ is not pseudo-effective, we may run a g-MMP on $(K_{X_{i}}+B_{i}(c_{i})+M_{i}(c_i))$ which terminates with a Mori fiber space ${X}_{i}''\to Z_{i}$. Let $A_i'',B_i(t)''$ and $M_i''(t)$ be the strict transforms of $A_i,B_i(t)$ and $M_i(t)$ respectively. Since each step of the g-MMP is $A_i$-positive, $A_i''\neq0$. We claim that $K_{X_{i}}+A_i+B_{i}(c_{i}')+M_{i}(c_i')\not\equiv0$. Otherwise $K_{X_{i}}+A_i+B_{i}(c)+M_{i}(c)\equiv0$, which contradicts Theorem \ref{thm: globalACC} as $({X_i},(A_i+B_i(c))+M_i(c))$ is g-lc (by Lemma \ref{rem2}) and the coefficients of $A_i$ are approaching $1$. 
     
     We may replace $X_{i}$ by ${X}_{i}''$, and assume that $X_{i}$ admits a Mori fiber space $g_{i}:X_{i}\to Z_{i}$. Note that we have $A_{i}\neq0$ and $\Supp A_{i}$ dominates $Z_{i}$ since $g_{i}$ is $A_{i}$-positive. If $\dim Z_{i}>0$, then we are done by Proposition \ref{prop2}. Hence we may assume that $\dim Z_{i}=0$ and $X_{i}$ has Picard number one.
     
     \begin{claim}\label{lem roundupglc}
          Possibly passing to a subsequence, we may assume that $K_{X_i}+A_i+B_i(c)+M_i(c)$ is not ample for any $i$.
     \end{claim}
     \begin{proof}[Proof of Claim \ref{lem roundupglc}]
     By Lemma \ref{rem2}, possibly passing to a subsequence, we may assume that $({X_i},(A_i+B_i(c))+M_i(c))$ is g-lc for any $i$. We may write $B_i(t):=B_{i0}+t(B_{i}^{+}-B_{i}^-)$ and $M_i(t):=M_{i0}+t(M_{i}^{+}-M_{i}^-)$, where $B_{i}^+\ge 0$ and $B_{i}^-\ge 0$ have no common components.      
     Suppose on the contrary that $K_{X_i}+A_i+B_i(c)+M_i(c)$ is ample. Then $(c-c_i)(B_{i}^{+}-B_{i}^-+M_{i}^{+}-M_{i}^-)$ is ample, since $K_{X_i}+A_i+B_i(c_i)+M_i(c_i)\equiv 0.$ In particular, $B_{i}^{+}-B_{i}^-+M_{i}^{+}-M_{i}^-$ is antiample, as $c_i>c$ by assumption. Then $B_{i}^-+M_i^-\equiv \gamma_iB_{i}^{+}+M_i^+$ for some real number $\gamma_i>1.$ Let $t_i:=c_i-\frac{c_i-c}{\gamma_i}.$ Then $c<t_i<c_i$ and $(X_i,A_i+B_{i0}+cB_{i}^+-t_iB_{i}^-+cM_{i}^+-t_iM_{i}^-)$ is g-lc. Moreover, we have
       $$K_{X_i}+A_i+B_{i0}+cB_{i}^+-t_iB_{i}^-+cM_{i}^+-t_iM_{i}^-\equiv 0,$$
       which contradicts Theorem \ref{thm: globalACC}.
     \end{proof}

     Now we can finish the proof. If $(X_i,(\lceil A_i\rceil +B_i(c_i))+M_i(c_i))$ is not g-lc, then we set
     $$e_i:=\sup\{t\mid(X_i,(\lceil A_i\rceil +B_i(t))+M_i(c_i)) \text{ is g-lc}\}.$$
     Then $e_i\in \Ll_d(r,\Ii)\subseteq \mathfrak{N}_{d-1}(r,\Ii),$ and $\lim e_i=\lim c_i=c$, we are done. Thus we may assume that $(X_i,(\lceil A_i\rceil +B_i(c_i))+M_i(c_i))$ is g-lc. 
     
     We define $\alpha_i$ and $ \beta_i$ as
     $$K_{X_i}+\lceil A_i\rceil +B_i(\alpha_i)+M_i(\alpha_i)\equiv 0, K_{X_i}+\beta_i\lceil A_i\rceil +B_i(c)+M_i(c)\equiv 0.$$
     We have that $\alpha_i<c_i,$ by Claim \ref{lem roundupglc} and the assumption that $K_{X_i}+A_i+B_i(c_i)+M_i(c_i)\equiv 0$. If $\alpha_i< c<c_i.$ Then $K_{X_i}+\lceil A_i\rceil +B_i(c)+M_i(c)$ is ample and thus $\beta_i<1.$ As the coefficients of $A_i$ are approaching one, $\lim \beta_i=1$. In particular, we may assume that the set of coefficients of $\beta_i\lceil A_i\rceil +B_i(c)+M_i(c)$ satisfies the DCC, which contradicts Theorem \ref{thm: globalACC}. Thus $c\le\alpha_i<c_i,$ and $\lim \alpha_i=c.$ Note that then $(X_i,(\lceil A_i\rceil+B_i(\alpha_i))+M_i(\alpha_i))$ is g-lc and not g-klt, as both $(X_i,(\lceil A_i\rceil+B_i(c))+M_i(c))$ and $(X_i,(\lceil A_i\rceil+B_i(c_i))+M_i(c_i))$ are g-lc. We may replace $A_i$ by $\lceil A_i\rceil$, and we are done by Proposition \ref{prop1}.
\end{proof}

\begin{proof}[Proof of Theorem N and Theorem P]
     The theorems follow from Proposition \ref{prop1}, Proposition \ref{prop2}, Proposition \ref{prop:pandnimplyn}, Proposition \ref{prop: pifepsilonlc} and Proposition \ref{prop: nimpliesp}.
\end{proof}

\subsection{Nakamura's (generalized) lc rational polytopes}
     In this subsection, we show Theorem \ref{thm: uniformpolytopeforgenlc}.
     
     The proof of Theorem \ref{thm: uniformperturbationofglc} is similar to that of \cite[Theorem 1.6]{Nak16}. Note that for generalized pairs $(X,B(t)+M(t))$ satisfying the conditions of Theorem \ref{thm: uniformperturbationofglc}, $K_{X}+B(t)+M(t)$ is $\Rr$-Cartier for any $t\in\Rr$, by \cite[Lemma 5.4]{HLS19}.
     
\begin{thm}\label{thm: uniformperturbationofglc}
     Let $d,c,m$ and $l$ be positive integers, $r_0:=1,r_1,\dots,r_{c}$ real numbers which are linearly independent over $\Qq$, and $s_1,\ldots, s_{m},\dots,s_{m+l}: \Rr^{c+1}\to\Rr$ $\Qq$-linear functions. Then there exists a positive real number $\epsilon$ depending only on $d_1,\dots, r_{c}$ and $s_1,\dots, s_{m+l}$ satisfying the following. 
     
     Assume that $(X,B(t)+M(t))$ is a generalized pair with data $X'\xto X\to Z$ and $M'(t):=\sum_{j=1}^ls_{m+j}(r_0,\ldots,r_{c-1},t)M_j'$ such that
     \begin{enumerate}
        \item $(X,B(r_c)+M(r_c))$ is g-lc of dimension $d$,
        \item $B(t):=\sum_{i=1}^ms_i(r_0,\ldots,r_{c-1},t)B_i$, where $B_i$ are Weil divisors on $X$, and
        \item $M_j'$ is b-Cartier nef$/Z$ for any $1\le j\le l$.
     \end{enumerate}
     Then $(X,B(t)+M(t))$ is g-lc for any $t$ satisfying $|t-r_{c}|\le\epsilon$.
\end{thm}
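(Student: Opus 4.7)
The plan is to argue by contradiction and reduce to the accumulation-point result proved in Section 3.1 (Theorem~\ref{thm:accumulationpoints1}).

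Assume there is no such $\epsilon$. Then for each $i\in\Zz_{>0}$ there exist a generalized pair $(X_i,B_i(t)+M_i(t))$ as in the statement and a real number $t_i$ with $|t_i-r_c|<1/i$ such that $(X_i,B_i(t_i)+M_i(t_i))$ fails to be g-lc. After passing to a subsequence we may assume all $t_i$ lie on the same side of $r_c$; say $t_i>r_c$ (the other case is identical). Since $(X_i,B_i(r_c)+M_i(r_c))$ is g-lc, set
$$c_i:=\sup\{t\ge r_c\mid (X_i,B_i(s)+M_i(s))\text{ is g-lc for every }s\in[r_c,t]\}.$$
Using \cite[Lemma 5.4]{HLS19} so that $K_{X_i}+B_i(t)+M_i(t)$ is $\Rr$-Cartier for every $t$, we obtain $r_c\le c_i<t_i$ with $c_i\to r_c$, and the pair $(X_i,B_i(c_i)+M_i(c_i))$ is g-lc while $(X_i,B_i(c_i+\delta)+M_i(c_i+\delta))$ fails to be g-lc for every sufficiently small $\delta>0$.

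Next I would fit these data into the framework of Section 3.1 with $c=r_c$. Because $r_0=1,r_1,\dots,r_{c-1}$ and the $\Qq$-linear functions $s_1,\dots,s_{m+l}$ are \emph{fixed}, a sufficiently divisible positive integer $N$ clears the denominators of all the rational coefficients appearing in the $s_j$. Set
$$\Ii_0:=\{0\}\cup\{r_k,\ r_k/N'\mid 1\le k\le c-1,\ 1\le N'\le N\}\cup\{1/N'\mid 1\le N'\le N\}\subseteq[0,+\infty),$$
suitably enlarged so that, for every coefficient $s_j(r_0,\dots,r_{c-1},t)=\sum_{k=0}^{c-1}a_{jk}r_k+a_{jc}t$, one can write $s_j$ in the required hyperstandard form $\tfrac{m-1+u+kt}{m}$ (for $1\le j\le m$) or linear form $v+nt$ (for $m+1\le j\le m+l$), with each summand $u_p+k_p t$ satisfying $u_p+k_p r_c\ge 0$. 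The latter non-negativity is the place where the value $r_c$ itself enters: the positivity of the evaluated coefficient $s_j(r_0,\dots,r_{c-1},r_c)\ge 0$ allows us to pair each negative integer multiple of $t$ with a compensating positive element of $\Ii_0$, exactly as in \cite[Section 3]{Nak16}. The key point is that $\Ii_0$ is finite and depends only on $r_0,\dots,r_{c-1}$ and the $s_j$, not on $i$. With this in hand, $B_i(t)\in\Dd_{r_c}(\Ii_0)$ and $(X_i'\to X_i\to Z_i,M_i(t))\in\Dd_{r_c}^{nef}(\Ii_0)$ for every $i$.

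By construction $c_i\in\mathfrak{L}_d(\Ii_0)$. By Theorem~\ref{thm:accumulationpoints1} every accumulation point of $\mathfrak{L}_d(\Ii_0)$ belongs to $\Span_{\Qq}(\Ii_0\cup\{1\})\subseteq\Span_{\Qq}(1,r_1,\dots,r_{c-1})$. Since $c_i\to r_c$, we conclude that $r_c\in\Span_{\Qq}(1,r_1,\dots,r_{c-1})$, contradicting the $\Qq$-linear independence of $1,r_1,\dots,r_c$.

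The main obstacle, as so often in this circle of ideas, is the combinatorial bookkeeping in the middle paragraph: producing a single finite set $\Ii_0\subseteq[0,+\infty)$ that simultaneously expresses every coefficient $s_j(r_0,\dots,r_{c-1},t)$ as a sum $\sum_p(u_p+k_p t)$ with $u_p\in\Ii_0$, $k_p\in\Zz$ and $u_p+k_p r_c\ge 0$. This is a finite problem in the $\Qq$-vector space with basis $1,r_1,\dots,r_{c-1},t$, solved by clearing denominators with a single $N$ and then rebalancing signs against the positivity of $s_j(r_0,\dots,r_{c-1},r_c)$ as in Nakamura's argument; the rest of the proof is then a direct appeal to the accumulation-point theorem.
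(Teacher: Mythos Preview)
Your overall strategy---suppose no uniform $\epsilon$ exists, extract a sequence of thresholds $c_i\to r_c$, and feed the limit into the accumulation-point result (Theorem~\ref{thm:accumulationpoints1}) to contradict $\Qq$-independence---is exactly the paper's. But the middle paragraph, which you yourself flag as ``the main obstacle,'' has a genuine gap, and the paper's fix is different from what you sketch.

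The problem is the integrality constraint built into $\Dd_c(\Ii)$ and $\Dd_c^{nef}(\Ii)$: every coefficient must be written as $\sum_p(u_p+k_pt)$ with $k_p\in\Zz$. Summing, the $t$-coefficient of each $s_j$ must be an \emph{integer}; but $s_j$ is only $\Qq$-linear, so its $t$-coefficient $q_{jc}$ is merely rational. Your proposed $\Ii_0$ (built from $r_k$, $r_k/N'$, $1/N'$) does nothing about this, and for the nef part there is no hyperstandard denominator $m$ to absorb the problem. So as written, $B_i(t)\in\Dd_{r_c}(\Ii_0)$ and $(X_i'\to X_i\to Z_i,M_i(t))\in\Dd_{r_c}^{nef}(\Ii_0)$ need not hold, and hence $c_i\in\mathfrak L_d(\Ii_0)$ is not justified. (There is also a minor slip: membership in $\mathfrak L_d(\Ii_0)$ requires $\Dd_{c_i}$, not $\Dd_{r_c}$; and you omit the g-dlt modification needed to make $X_i$ $\Qq$-factorial, which the definition of $\mathfrak L_d$ demands.)

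The paper sidesteps the bookkeeping with a reparametrization. Choose a rational $t^-<r_c$ close enough that every $s_j(r_0,\dots,r_{c-1},t)\ge0$ for $t\in[t^-,r_c]$, and choose $n$ so that $nq_{jc}\in\Zz$ for all $j$. Replace the variable $t$ by $\tau:=(t-t^-)/n$; then
\[
s_j(r_0,\dots,r_{c-1},t)=s_j(r_0,\dots,r_{c-1},t^-)+(nq_{jc})\,\tau,
\]
a single summand with $u_j:=s_j(r_0,\dots,r_{c-1},t^-)\ge0$ and integer slope $nq_{jc}$. Thus the finite set $\Ii:=\{s_j(r_0,\dots,r_{c-1},t^-)\}$ works, the thresholds become $(h_i^--t^-)/n\in\mathfrak L_d(\Ii)$, and the accumulation point $(r_c-t^-)/n$ lands in $\Span_\Qq(\Ii\cup\{1\})\subseteq\Span_\Qq(r_0,\dots,r_{c-1})$ since $t^-\in\Qq$---the desired contradiction. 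This affine change of variable is the missing idea in your argument.
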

     
\begin{proof}
     We may write $s_i(x_0,\dots,x_c):=\sum_{j=0}^{c}q_{ij}x_j,$ for any $1\le i\le m+l$, where $q_{ij}$ is a rational number for any $i,j$. Let $n$ be a positive integer such that $nq_{ic}\in\Zz$ for any $1\le i\le m+l$. Since $s_i(r_0,\dots,r_c)\ge0$ and $r_0,\dots,r_c$ are linearly independent over $\Qq$, there exist two rational numbers $t^-,t^+$ such that $t^-<r_c<t^+,$ and $s_i(r_0,\dots,r_{c-1},t)\ge0$ for any $t$ satisfying $t^-\le t\le t^+$.
     
     Suppose on the contrary that there exist generalized pairs $(X_i$, $B_i(t)+M_i(t))$ with data $X_i'\xrightarrow{f_i} X_i\to Z_{i}$ and $M'_{i}(t)$ satisfying the conditions and that either $\lim_{i\to\infty}  h_i^+=r_c$ or $\lim_{i\to\infty}  h_i^-=r_c$, where $h_i^+$ and $h_i^-$ are defined as
     $$h_i^+:=\sup\{t\ge r_c\mid (X_i,B_i(t)+M_i(t))\text{ is g-lc}\},$$
     $$h_i^-:=\sup\{t\le r_c\mid (X_i,B_i(t)+M_i(t))\text{ is g-lc}\}.$$
     
     For each $i$, possibly replacing $X'_{i}$ by a higher model, there exists a g-dlt modification $g_{i}:Y_{i}\to X_{i}$ of $(X_{i},B_{i}(r_{c})+M_{i}(r_c))$, such that  
     $$K_{Y_{i}}+B_{Y_{i}}(r_c)+E_{i}+M_{Y_{i}}(r_c)=g_{i}^{*}(K_{X_{i}}+B_{i}(r_{c})+M_{i}(r_c)),$$
     where $M_{Y_{i}}(t),B_{Y_{i}}(t)$ are the strict transforms of $M'_{i}(t),B_{i}(t)$ on $Y_{i}$ respectively, and $E_{i}$ is the sum of reduced $g_{i}$-exceptional divisors.
     
     Since $r_0,\dots,r_c$ are linearly independent over $\Qq$, we have
     $$K_{Y_{i}}+B_{Y_{i}}(t)+E_{i}+M_{Y_{i}}(t)=g_{i}^{*}(K_{X_{i}}+B_{i}(t)+M_{i}(t))$$
     for any $t\in\Rr$. 
     Possibly replacing $(X_{i},B_{i}(t)+M_{i}(t))$ by $(Y_{i},(B_{Y_{i}}(t)+E_{i})+M_{Y_{i}}(t))$, we may assume that $X_{i}$ is $\Qq$-factorial for any $i$.
     
     By our assumltion,  we may write $B_i(t):=\sum_{j=1}^m s_j(r_0,\ldots,r_{c-1},t)B_{ij}$ and $M'_{i}(t):=\sum_{k=1}^ls_{m+k}(r_0,\ldots,r_{c-1},t)M_{ik}'$. Without loss of generality, we may assume that $\lim_{i\to\infty} h_i^-=r_c$ and $t^-\le h_i^-\le r_c$. Let 
     $$\Ii:=\{s_i(r_0,\dots,r_{c-1},t^-)\mid 1\le i\le m\}$$
     be a finite set. Since
     $$B_i(t)=B_i(t^-)+\sum_{j=1}^m \frac{t-t^-}{n}(nq_{jc})B_{ij},$$
     and
     $$M_i(t)=M_i(t^-)+\sum_{k=1}^l \frac{t-t^-}{n}(nq_{kc})M_{ik},$$
     we have that $\frac{h_i^--t^-}{n}\in\mathfrak{L}_{d}(\Ii)$. Hence 
     $$\frac{r_c-t^-}{n}\in\Span_{\Qq}(\{\Ii\cup\{1\}\})\subseteq\Span_{\Qq}(\{r_0,\dots,r_{c-1}\})$$
     by Theorem\ref{thm:accumulationpoints1}, a contradiction.
\end{proof}

Following the same arguments as in \cite[Theorem 5.6]{HLS19}, we have the following result. 

\begin{proof}[Proof of Theorem \ref{thm: uniformpolytopeforgenlc}]
The result follows from Theorem \ref{thm: uniformperturbationofglc}.
\end{proof}

\begin{rem}
     When $\bm{v}\in\Qq^m$, then $U=V=\{\bm{v}\}$. We also note that $U$ dose not depend on $X$.
\end{rem}


\subsection{Han type polytopes for $\Rr$-complementary (generalized) pairs}

\begin{thm}\label{thm: uniformperturbationofRcomp}
     Let $d,c,m$ and $l$ be positive integers, $r_0:=1,r_1,\dots,r_{c}$ real numbers which are linearly independent over $\Qq$, and $s_1,\ldots, s_{m+l}: \Rr^{c+1}\to\Rr$ $\Qq$-linear functions. Then there exists a positive real number $\epsilon$ depending only on $d_1,\dots, r_{c}$ and $s_1,\dots, s_{m+l}$ satisfying the following. 
     
     Assume that $(X,B(t)+M(t))$ is a generalized pair with data $X'\xto X\to Z$ and $M'(t):=\sum_{j=1}^ls_{m+j}(r_0,\ldots,r_{c-1},t)M_j'$ such that
     \begin{enumerate}
        \item $\dim X=d$, 
        \item $X$ is of Fano type over $Z$,
        \item $B(t):=\sum_{i=1}^ms_i(r_0,\ldots,r_{c-1},t)B_i$, where $B_i$ are Weil divisors on $X$, 
        \item $M_j'$ is b-Cartier nef$/Z$ for any $1\le j\le l$, and
        \item $(X/Z,B(r_c)+M(r_c))$ is $\Rr$-complementary.
     \end{enumerate}
     Then $(X/Z,B(t)+M(t))$ is $\Rr$-complementary for any $t$ satisfying $|t-r_{c}|\le\epsilon$.
\end{thm}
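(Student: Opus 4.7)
The plan is to bootstrap from Theorem \ref{thm: uniformperturbationofglc} by transferring an $\Rr$-complement at $t = r_c$ to nearby values of $t$, then correcting for numerical triviality over $Z$ using the Fano type hypothesis. The argument parallels \cite[Theorem 5.6]{HLS19}, with natural modifications for the nef part.

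First, I would fix an $\Rr$-complement $(X/Z, B^+ + M^+)$ of $(X/Z, B(r_c) + M(r_c))$ supplied by the hypothesis. Writing $B^+ = B(r_c) + G$ with $G = \sum_k \gamma_k D_k \geq 0$ and $M^+ = \sum_j (\mu_j(r_c) + \alpha_j) M_j$ with $\alpha_j \geq 0$, where $\mu_j(t) := s_{m+j}(r_0,\ldots,r_{c-1},t)$, I form the candidate family $B^+(t) := B(t) + G$ and $M^+(t) := \sum_j (\mu_j(t) + \alpha_j) M_j$ for $t$ near $r_c$. These are linear functional divisors in $t$, and a direct calculation gives $K_X + B^+(t) + M^+(t) \equiv (t - r_c) L / Z$, where $L := \sum_i q_{ic} B_i + \sum_j q_{m+j,c} M_j$ and $q_{ic} \in \Qq$ is the coefficient of $x_c$ in $s_i$. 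Applying Theorem \ref{thm: uniformperturbationofglc} to this family (after encoding the constants $\gamma_k, \alpha_j$ into the linear-function input in a way that does not worsen the perturbation bound), I obtain a uniform $\epsilon_1 > 0$ depending only on $r_0,\ldots,r_c$ and $s_1,\ldots,s_{m+l}$ such that $(X, B^+(t) + M^+(t))$ is g-lc for all $|t - r_c| \leq \epsilon_1$.

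Next, to upgrade this g-lc candidate to an honest $\Rr$-complement of $(X/Z, B(t) + M(t))$, I need to kill the residual obstruction $(t - r_c) L$ over $Z$. Since $X$ is of Fano type over $Z$, the pseudo-effective cone of $X/Z$ is polyhedral and full-dimensional; for $|t - r_c|$ small enough, I can produce an effective $\Rr$-divisor $\Delta(t) \geq 0$ with $\Delta(t) \sim_{\Rr, Z} -(t - r_c) L$ and sufficiently general support so that $(X, B^+(t) + \Delta(t) + M^+(t))$ remains g-lc, by a Bertini-type argument combined with a g-MMP on $K_X + B^+(t) + M^+(t)$ over $Z$. This pair is then an $\Rr$-complement of $(X/Z, B(t) + M(t))$.

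The main obstacle is the uniformity of $\epsilon$: both the g-lc threshold $\epsilon_1$ in the second step and the size of the adjustment $\Delta(t)$ in the third step must be controlled by $r_0,\ldots,r_c$ and $s_1,\ldots,s_{m+l}$ alone, independent of the chosen complement and of $X$. The uniformity of $\epsilon_1$ comes from Theorem \ref{thm: uniformperturbationofglc} after one verifies that the added constants $\gamma_k, \alpha_j$ do not affect the relevant accumulation-point analysis of g-lc thresholds. The uniformity in the third step follows, as in \cite[Theorem 5.6]{HLS19}, from a Fano-type boundedness argument showing that the required effective adjustment exists and preserves g-lc whenever $|t - r_c|$ is below a threshold determined solely by the linear data.
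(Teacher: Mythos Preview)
Your approach has a genuine uniformity gap. In the second step you want to apply Theorem \ref{thm: uniformperturbationofglc} to the family $(X, B^+(t) + M^+(t))$, but the coefficients involve the constants $\gamma_k,\alpha_j$ coming from a chosen $\Rr$-complement; these are arbitrary real numbers depending on $X$, not on the fixed data $r_0,\ldots,r_c, s_1,\ldots,s_{m+l}$. Theorem \ref{thm: uniformperturbationofglc} requires the coefficient functions to lie in a fixed finite list of $\Qq$-linear forms in $r_0,\ldots,r_c$, and its proof (via Theorem \ref{thm:accumulationpoints1}) relies crucially on the coefficient set $\Ii$ being finite. Your proposed fix, ``verify that the added constants do not affect the accumulation-point analysis'', is exactly the hard part and is not justified: the whole machinery of $\mathfrak{L}_d(\Ii)$ and $\mathfrak{N}_d(\Ii)$ is built over a \emph{finite} $\Ii$, and allowing the $\gamma_k$ to be arbitrary destroys the DCC/ACC control. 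The third step is also flawed: for a generic $L$ there is no reason $-(t-r_c)L$ is $\Rr$-linearly equivalent over $Z$ to an effective divisor for both signs of $t-r_c$, so the correction $\Delta(t)$ need not exist at all.

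The paper bypasses both issues by never fixing a complement. It proves directly that $-(K_X+B(t)+M(t))$ is pseudo-effective$/Z$ for $|t-r_c|\le\epsilon$ by contradiction: a sequence of counterexamples with pseudo-effective thresholds $h_l^\pm\to r_c$ is pushed via an MMP to a Mori fiber space, Theorem \ref{thm: uniformperturbationofglc} keeps the relevant pairs g-lc, and one extracts $\eta_l\to r_c$ with $(K_{Y_l}+B_{Y_l}(\eta_l)+M_{Y_l}(\eta_l))|_{F_l}\equiv 0$ on a general fiber, contradicting Theorem \ref{thm:accumulationpoints1} since $r_c\notin\Span_\Qq(r_0,\ldots,r_{c-1})$. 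Once pseudo-effectivity holds, for each $t$ one runs an MMP$/Z$ on $-(K_X+B(t)+M(t))$ to a model where it is semiample$/Z$, checks g-lcness there via Theorem \ref{thm: uniformperturbationofglc} applied only to the original functions $s_i$, and pulls the resulting $\Rr$-complement back. No pair-dependent constants ever enter the coefficient data, so uniformity is automatic.
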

     The proof is very similar to that of \cite[Theorem 5.16]{HLS19}.

\begin{proof}
     We claim that there exists a positive real number $\epsilon$, such that $-(K_{X}+B(t)+M(t))$ is pseudo-effective for any $t$ satisfying $|t-r_{c}|\le\epsilon$. Suppose that the claim does not hold. By Theorem \ref{thm: uniformperturbationofglc}, there exist generalized pairs $(X_l,B_l(t)+M_l(t))$ with data $X_l'\xrightarrow{f_l} X_l\to Z_{l}$ and $M'_{l}(t)$ satisfying the conditions and either $\lim h_l^+=r_c$ or $\lim h_l^-=r_c$, where
     $$h_l^+:=\sup\{t\ge r_c\mid -(K_{X_l}+B_{l}(t)+M_l(t))\text{ is pseudo-effective}/Z\},$$
     $$h_l^-:=\sup\{t\le r_c\mid -(K_{X_l}+B_{l}(t)+M_l(t))\text{ is pseudo-effective}/Z\}.$$
	
	Without loss of generality, we may assume that $\lim h_l^{+}=r_{c}$. Possibly passing to a subsequence, we may assume that $h_l^{+}$ is strictly decreasing and there exists a sequence of real numbers $t_l$, such that $h_l^{+}<t_l\le h_{l-1}^{+}$, $(X_l,B_l(t_l)+M_l(t_l))$ is g-lc, and $-(K_{X_{l}}+B_{l}(t_l)+M_{l}(t_l))$ is not pseudo-effective over $Z$.
	
	 We may run an MMP$/Z_{l}$ on $-(K_{X_l}+B_l(t_l)+M_l(t_l))$ which terminates with a model $Y_l\to Z_l'$ over $Z_{l}$, such that $-(K_{Y_l}+B_{Y_l}(t_l)+M_{Y_l}(t_l))$ is antiample over $Z_l'$, where $B_{Y_l}(t),M_{Y_l}(t)$ are the strict transforms of $B_l(t),M_l(t)$ on $Y_l$. Since $-(K_{Y_l}+B_{Y_l}(h_l^{+})+M_{Y_l}(h_l^{+}))$ is pseudo-effective over $Z_l$, $-(K_{Y_l}+B_{Y_l}(h_l^{+})+M_{Y_l}(h_l^{+}))$ is nef over $Z_l'$. Thus there exists a real number $\eta_l$ such that $h_l^{+}\le\eta_l<t_l$ and 
	$$(K_{Y_l}+B_{Y_l}(\eta_l)+M_{Y_{l}}(\eta_l))|_{F_l}\equiv 0,$$
	where $F_l$ is a general fiber of $Y_l\to Z_l'$. 
	Since $({X_l/Z_{l}},B_l(r_{c})+M_l(r_{c}))$ is $\Rr$-complementary, $(Y_l,B_{Y_l}(r_c)+M_{Y_l}(r_{c}))$ is g-lc. By Theorem \ref{thm: uniformperturbationofglc}, we may assume that both $({Y_l},B_{Y_l}(t_l)+M_{Y_l}(t_l))$ and $({Y_l},B_{Y_l}(h_l^{+})+M_{Y_l}(h_l^{+}))$ are g-lc. Thus $(K_{Y_l}+B_{Y_l}(\eta_l)+M_{Y_l}(\eta_l))|_{F_l}$ is g-lc. Since $\lim_{l\to+\infty} \eta_l=r_{c}$, by Theorem \ref{thm:accumulationpoints1}, $r_{c}\in\Span_{\Qq}(\{r_0,r_1,\ldots,r_{c-1}\})$, a contradiction.
	
	\medskip
	
	For any $t$ satisfying $|t-r_{c}|\leq\epsilon$, we may run an MMP$/Z$ on $-(K_X+B(t)+M(t))$ which terminates with a model $Y_t$, such that $-(K_{Y_t}+B_{Y_t}(t)+M_{Y_t}(t))$ is semiample over $Z$, where $B_{Y_t}(t),M_{Y_t}(t)$ are the strict transforms of $B(t),M(t)$ on $Y_t$. Since $(Y_t,B_{Y_t}(r_c)+M_{Y_t}(r_c))$ is g-lc, by the claim again, $(Y_t,B_{Y_t}(t)+M_{Y_t}(t))$ is g-lc. Thus $(Y_t/Z,B_{Y_t}(t)+M_{Y_t}(t))$ is $\Rr$-complementary and so is $(X/Z,B(t)+M(t))$. This complete the proof.
\end{proof}

By Theorem \ref{thm: uniformperturbationofRcomp} and \cite[Theorem 5.17]{HLS19}, we can show the existence of Han type polytopes for $\Rr$-complementary (generalized) pairs. The proof is very similar to Theorem \ref{thm: uniformperturbationofRcomp} and \cite[Theorem 5.17]{HLS19}, we may omit it.

\begin{thm}[Han type polytopes for $\Rr$-complementary (generalized) pairs]\label{thm: uniformpolytopeforRcomp}
	Let $d,m$ and $l$ be positive integers, $\bm{v}_0:=(v_1^0,\ldots,v_{m+l}^0)\in\Rr^{m+l}$ a point and $V\subseteq\Rr^{m+l}$ the rational envelope of $\bm{v}_0$. Then there exists an open set $U\ni\bm{v}_0$ of $V$ depending only on $d,m,l$ and $\bm{v}_0$ satisfying the following.   
	
	Assume that $(X,(\sum_{i=1}^m v_i^0B_{i})+(\sum_{j=1}^{l}v_{m+j}^0M_j))$ is a generalized pair with data $X'\xto X\to Z$ and $\sum_{j=1}^{l}v_{m+j}^0M_j'$ such that
	\begin{enumerate} 
	   \item $\dim X=d$, 
	   \item $X$ is of Fano type over $Z$,
	   \item $B_1,\ldots,B_m\ge0$ are Weil divisors on $X$,
	   \item $M_j'$ is b-Cartier nef$/Z$ for any $1\le j\le l$, and
	   \item $(X/Z,(\sum_{i=1}^m v_i^0B_{i})+(\sum_{j=1}^{l}v_{m+j}^0M_j))$ is $\Rr$-complementary.
	\end{enumerate}
	Then $(X/Z,(\sum_{i=1}^m v_iB)+(\sum_{j=1}^{l}v_{m+j}M_j))$ is $\Rr$-complementary for any point $(v_1,\dots,v_{m+l})\in U$.
\end{thm}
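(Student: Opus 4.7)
The plan is to run the same contradiction/MMP argument used to prove Theorem \ref{thm: uniformperturbationofRcomp}, but parametrize the entire rational envelope $V$ instead of a single line, much as in \cite[Theorem 5.17]{HLS19}. First I would fix coordinates: since $V$ is the smallest $\Qq$-affine subspace of $\Rr^{m+l}$ containing $\bm{v}_0$, I can choose real numbers $r_0:=1,r_1,\ldots,r_c$ linearly independent over $\Qq$ (with $c=\dim V$) and rationals $q_{ij}$ such that $v_i^0=\sum_{j=0}^c q_{ij}r_j$. Define $\Qq$-linear functions $s_i:\Rr^{c+1}\to\Rr$ by $s_i(x_0,\ldots,x_c):=\sum_j q_{ij}x_j$, so that the parametrization $\bm{t}=(t_1,\ldots,t_c)\mapsto (s_i(1,t_1,\ldots,t_c))_i$ sends $(r_1,\ldots,r_c)$ to $\bm{v}_0$ and identifies $V$ with an affine copy of $\Rr^c$.

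Next I would assemble a candidate polytope from $c$ applications of Theorem \ref{thm: uniformperturbationofRcomp}. For each $k\in\{1,\ldots,c\}$, permute the variables so $r_k$ is in the last slot and apply Theorem \ref{thm: uniformperturbationofRcomp} to the family in which only $t_k$ moves. Each application produces a uniform constant $\epsilon_k>0$ depending only on $d,r_0,\ldots,r_c$ and the $\Qq$-linear functions, hence only on $d,m,l,\bm{v}_0$. Taking $\epsilon:=\min_k \epsilon_k$, I would set
$$U:=\{\bm{v}(\bm{t})\in V \mid |t_k-r_k|<\epsilon \text{ for every } k=1,\ldots,c\},$$
an open neighborhood of $\bm{v}_0$ in $V$ whose definition does not depend on the pair.

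To show that every $\bm{v}\in U$ is $\Rr$-complementary, I would argue exactly as in the proof of Theorem \ref{thm: uniformperturbationofRcomp}. Suppose for contradiction that there exist a sequence $\bm{v}^{(l)}\to\bm{v}_0$ in $U$ and generalized pairs $(X_l,B_l(\bm{v}^{(l)})+M_l(\bm{v}^{(l)}))$ satisfying (1)--(4) such that the base pair at $\bm{v}_0$ is $\Rr$-complementary but the perturbed pair at $\bm{v}^{(l)}$ is not. Running a $G$-MMP$/Z_l$ on $-(K_{X_l}+B_l(\bm{v}^{(l)})+M_l(\bm{v}^{(l)}))$ terminates with a model admitting a Mori fiber space structure $Y_l\to Z_l'$; restricting to the general fiber yields, in the limit, an equilibrium relation of the form $K_{F_l}+B_{F_l}(\bm{\eta}_l)+M_{F_l}(\bm{\eta}_l)\equiv 0$ where $\bm{\eta}_l\to(r_1,\ldots,r_c)$ and the generalized pair is g-lc. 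Theorem \ref{thm:accumulationpoints1} (accumulation points of $\mathfrak{N}_d$) then forces $r_c\in\Span_{\Qq}(\{r_0,\ldots,r_{c-1}\})$, contradicting linear independence.

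The main obstacle is bookkeeping: I need to arrange the MMP and the limit so that the contradiction fires on one specific irrational $r_k$ rather than on an unavoidable mixture of them. I expect to handle this exactly as in the one-parameter case of Theorem \ref{thm: uniformperturbationofRcomp} by treating one coordinate direction at a time (say the $c$-th, after an initial passage to a subsequence making $t_c^{(l)}-r_c$ the dominant contribution and using Theorem \ref{thm: uniformperturbationofglc} to ensure all intermediate perturbed pairs remain g-lc), invoking Theorem \ref{thm:accumulationpoints1} against the corresponding direction, and iterating over $k$. Since the proof of Theorem \ref{thm: uniformperturbationofRcomp} already encodes the fiberwise MMP reduction and the accumulation-point contradiction needed here, no new geometric input is required beyond this reorganization.
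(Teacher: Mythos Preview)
Your paragraph-2 reduction to Theorem \ref{thm: uniformperturbationofRcomp} is the approach the paper intends (it defers the proof to Theorem \ref{thm: uniformperturbationofRcomp} together with \cite[Theorem 5.17]{HLS19}). But the $c$ axis-wise applications only establish $\Rr$-complementarity at points $(r_1,\ldots,t_k,\ldots,r_c)$ with a single coordinate perturbed; they do not by themselves cover the box $U$ you wrote down. The missing step is that $\Rr$-complementarity is convex over a fixed $X\to Z$: if $(X/Z,B_1+M_1)$ and $(X/Z,B_2+M_2)$ each admit an $\Rr$-complement, then the convex combination of those complements is an $\Rr$-complement of the convex combination of the pairs (generalized log discrepancies and $K_X+B+M$ are linear in the coefficients, and the inequalities $B^+\ge B$, $\mu_j^+\ge\mu_j$ are preserved under convex combination). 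Hence the pair is $\Rr$-complementary on the convex hull of the $2c$ axial endpoints $(r_1,\ldots,r_k\pm\epsilon,\ldots,r_c)$, a cross-polytope with $\bm{v}_0$ in its interior; that interior is your $U$. With this one observation, paragraphs 3--4 become unnecessary.

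The contradiction argument you sketch in paragraphs 3--4 has a real gap beyond redundancy. The accumulation-point machinery (Theorem \ref{thm:accumulationpoints1} and the classes $\Dd_c(\Ii)$, $\mathfrak{N}_d(\Ii)$) requires the coefficients to be affine in a \emph{single} real parameter with \emph{rational} slope and constants drawn from a \emph{fixed finite} set $\Ii$; this is precisely why the proof of Theorem \ref{thm: uniformperturbationofRcomp} freezes $r_0,\ldots,r_{c-1}$ and lets only the last variable move. If your sequence $\bm{v}^{(l)}$ approaches $\bm{v}_0$ along an irrational direction in $V$, parametrizing the segment from $\bm{v}_0$ to $\bm{v}^{(l)}$ gives irrational slopes; if you instead parametrize by $t_c$ alone, the constants $s_i(1,t_1^{(l)},\ldots,t_{c-1}^{(l)},0)$ vary with $l$ and do not sit in a single finite set. ``Making $t_c^{(l)}-r_c$ dominant'' does not resolve either issue. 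The way to force everything into the one-parameter, rational-slope framework is exactly the coordinate-by-coordinate reduction plus convexity above.
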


\section{Boundedness of relative complements}\label{section4'}
 
 In this section, we show the following result which is the relative version of \cite[Theorem 1.10]{Bir19}. Proofs are very similar to those in \cite[Section 8]{Bir19}.
\begin{thm}\label{thm: relncomplforfiniterat}
     	Let $d$ and $r$ be positive integers and $\Ii\subseteq [0,1]\cap\Qq$ a finite set. Then there exists a positive integer $n$ depending only on $d$, $r$ and $\Ii$ satisfying the following. 
	
	Assume that $(X,B+M)$ is a generalized pair with data $X'\xto X\to Z$ and $M'$ such that
	\begin{itemize}
		\item $(X,B+M)$ is a g-lc pair of dimension $d$,	
		\item $B\in\Phi(\Ii)$ and $rM'$ is b-Cartier nef$/Z$,
		\item $X$ is of Fano type over $Z$, and
		\item $-(K_X+B+M)$ is nef over $Z$.
	\end{itemize}
	Then for any point $z\in Z$, $(X/Z\ni z,B+M)$ has a monotonic $n$-complement $(X/Z\ni z,B^++M)$.
\end{thm}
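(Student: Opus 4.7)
The plan is to follow the strategy of the absolute case \cite[Theorem 1.10]{Bir19}, adapted to the relative setting over $Z$ near $z$, combining g-dlt modification, MMP over $Z$, the canonical bundle formula for generalized pairs, and Birkar's boundedness results for generalized Fano pairs.

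First I would take a $\Qq$-factorial g-dlt modification $\phi:(Y,B_Y+M_Y)\to(X,B+M)$, so that by Lemma \ref{lem: pullbackcomplements}(2) it suffices to construct a monotonic $n$-complement for $(Y/Z\ni z, B_Y+M_Y)$. Since $B\in\Phi(\Ii)$ and $\Phi(\Ii)$ is preserved by the relevant adjunctions (one checks that the new boundary coefficients are again of hyperstandard form in $\Phi(\Ii)$), and $rM_Y'$ remains b-Cartier nef$/Z$, we stay in the class of pairs governed by $d,r,\Ii$. Shrinking $Z$ to an affine open neighborhood of $z$, we reduce the problem to a local construction.

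Next, since $X$ is of Fano type over $Z$ and $-(K_X+B+M)$ is nef$/Z$, by the relative base-point-free theorem on Fano-type varieties, $-(K_X+B+M)$ is semiample over $Z$; let $g:X\to T/Z$ be the contraction it defines. Then $K_X+B+M\sim_{\Rr,T}0$. Applying the generalized canonical bundle formula (as in \cite{Bir19,BZ16}), we obtain a generalized pair $(T,B_T+M_T)$ with data over $Z$ such that $K_X+B+M=g^*(K_T+B_T+M_T)$, the coefficients of $B_T$ lie in $\Phi(\Ii')$ for a finite set $\Ii'\subseteq[0,1]\cap\Qq$, and $r'M_T'$ is b-Cartier, for $\Ii'$ and $r'$ depending only on $d,r,\Ii$ (using Theorem \ref{thm: ACCforglcts} and Birkar's effective adjunction for generalized Fano pairs). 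By construction $-(K_T+B_T+M_T)$ is ample$/Z$, and by Lemma \ref{lem: pullbackcomplements}(2) it suffices to find a monotonic $n$-complement of $(T/Z\ni g(z),B_T+M_T)$.

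We are now reduced to the case where the anti-log-canonical divisor is ample over $Z$ on a Fano-type variety $T/Z$ with boundary in a hyperstandard set and a nef part of bounded b-Cartier index. On this model I would run the proof of \cite[Theorem 1.10]{Bir19} almost verbatim: apply Birkar's BAB for generalized Fano pairs (so that the data varies in a bounded family over $Z$ near $g(z)$), use the effective base-point freeness of a multiple of $-(K_T+B_T+M_T)$ to produce a section, and lift a log canonical complement from a non-klt center passing through $g(z)$ via a Koll\'ar-type vanishing argument in the relative setting. Pulling back via $g$ gives the desired $n$-complement on $X$.

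The main obstacle is the relative lifting step: ensuring that the non-klt center through which the section is lifted meets the fibre over $g(z)$ and that the resulting section extends to a global section of the appropriate sheaf on $T$ over an affine neighborhood of $g(z)$. This requires generalized Kawamata--Viehweg vanishing applied over an affine base together with boundedness of the relative Fano structure. A secondary technical point is the bookkeeping ensuring the canonical bundle formula produces data controlled only by $d,r,\Ii$, for which Theorem \ref{thm: ACCforglcts} and the global ACC (Theorem \ref{thm: globalACC}) are the key inputs.
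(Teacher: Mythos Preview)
Your outline has a genuine structural gap: the route through the canonical bundle formula and BAB does not yield the induction you need, and in fact this is not how \cite[Theorem~1.10]{Bir19} is proved either.

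First, the contraction $g:X\to T$ defined by $-(K_X+B+M)$ does not lower dimension in general; when $-(K_X+B+M)$ is big over $Z$ (the typical case), $g$ is birational and $(T,B_T+M_T)$ is just the pushforward, so nothing has been gained. Second, and more seriously, there is no BAB statement that bounds Fano type varieties \emph{relatively} over an arbitrary base $Z$: \cite{Bir16b} bounds absolute $\epsilon$-lc Fano varieties, and there is no ``bounded family over $Z$ near $g(z)$'' to appeal to. So the step ``apply Birkar's BAB for generalized Fano pairs'' has no content here, and your final lifting step is left floating without an inductive framework to feed it a complement on the non-klt center.

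The paper (and \cite{Bir19}) instead runs induction on $d$ via adjunction to a prime divisor. The engine is Proposition~\ref{prop: relcomplspecalcase}: once one has arranged a $\Qq$-factorial generalized plt structure $(X,\Sigma+\alpha M)$ with $-(K_X+\Sigma+\alpha M)$ ample over $Z$ and $S=\lfloor\Sigma\rfloor\subseteq\lfloor B\rfloor$ irreducible and meeting $\pi^{-1}(z)$, one restricts to $S$ (coefficients stay in a hyperstandard set by \cite[Lemma~3.3]{Bir19}), applies the theorem in dimension $d-1$ to get a monotonic $n$-complement on $S$, and lifts it to $X$ using relative Kawamata--Viehweg vanishing plus the connectedness principle \cite[Lemma~2.14]{Bir19}. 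The remainder of the proof of Theorem~\ref{thm: relncomplforfiniterat} is the MMP and threshold manipulation (cutting by the pullback of a Cartier divisor through $z$, g-dlt modification, and several auxiliary MMPs over $Z$) needed to manufacture such an $S$ through the fibre. Your sketch skips the creation of $S$ and the dimension drop entirely; that is the missing idea.
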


\begin{prop}\label{prop: relcomplspecalcase}
     Let $d\ge2$ be a positive integer. Then $($Theorem \ref{thm: relncomplforfiniterat}$)_{d-1}$ implies $($Theorem \ref{thm: relncomplforfiniterat}$)_{d}$ for those $(X,B+M)$ such that
     \begin{enumerate}
        \item $B\in\Ii$,
        \item $(X,\Sigma+\alpha M)$ is $\Qq$-factorial generalized plt for some boundary $\Sigma$ and $\alpha\in(0,1),$
        \item $-(K_X+\Sigma+\alpha M)$ is ample over $Z$,
        \item $S=\lf \Sigma\rf\le\lf B\rf$ is irreducible, and
        \item $S$ intersects $\pi^{-1}(z)$, where $\pi$ is the morphism $X\to Z$.
     \end{enumerate}
\end{prop}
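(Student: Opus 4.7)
The plan is to reduce the proposition to the inductive hypothesis by performing generalized adjunction to $S$ and then lifting a monotonic $n$-complement from $S$ back to $X$ over a neighborhood of $z$, following the extension strategy of Birkar \cite[\S 8]{Bir19} adapted to the generalized setting.

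First, I would apply generalized adjunction to $S$ to get
\[
K_S + B_S + M_S = (K_X + B + M)|_S,
\]
giving a g-pair $(S, B_S + M_S)$ with nef part coming from $M'|_{S'}$. Since $(X, \Sigma + \alpha M)$ is $\Qq$-factorial generalized plt with $S = \lfloor \Sigma \rfloor$ and $\Sigma \leq B$ on $\lfloor\Sigma\rfloor$, the pair $(S, B_S + M_S)$ is g-lc of dimension $d-1$. Using the hyperstandard hypothesis $B \in \Phi(\Ii)$ together with the fact that $rM'$ is b-Cartier, a direct computation with the generalized adjunction formula for plt-type pairs (as in \cite[\S 3]{Bir19}) shows that $B_S \in \Phi(\Ii')$ for a finite set $\Ii' \subseteq [0,1]\cap\Qq$ depending only on $\Ii$ and $r$. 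Moreover $S$ is of Fano type over $\pi(S)$ by restricting the Fano type structure along the plt center $S$, and $-(K_S + B_S + M_S)$ is nef over $\pi(S)$. Picking any point $z_S \in S \cap \pi^{-1}(z)$ (which is non-empty by hypothesis~(5)), the inductive hypothesis $(\text{Theorem \ref{thm: relncomplforfiniterat}})_{d-1}$ produces a bounded positive integer $n$ (which we may take divisible by $r$) and a monotonic $n$-complement $(S/\pi(S)\ni z_S, B_S^+ + M_S)$ of $(S/\pi(S)\ni z_S, B_S + M_S)$.

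Next, I would lift this complement to $X$. After shrinking $Z$ around $z$, pick an integral Weil divisor $N$ on $X$ whose class represents $-n(K_X + B + M) + n P$, where $P \geq 0$ is the correction divisor defined so that pulling back along $S$ produces the divisor associated to $n(K_S + B_S^+ + M_S) \sim 0$. Using the auxiliary ample $\Rr$-divisor $-(K_X + \Sigma + \alpha M)$, one perturbs to construct a $\Qq$-divisor $L$ with $(X, S + L + \alpha M)$ plt, $\lfloor L\rfloor = 0$, and
\[
N - K_X - S - L - \alpha M \sim_{\Rr,Z} \text{ample},
\]
so that Kawamata--Viehweg vanishing on $X \to Z$ yields the surjection
\[
\pi_{*}\mathcal{O}_X(N) \twoheadrightarrow \pi_{*}\mathcal{O}_S(N|_S)
\]
near $z$. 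The chosen section of $\mathcal{O}_S(N|_S)$ coming from the $n$-complement on $S$ then lifts to a section of $N$, and its divisor $N^+$ defines a candidate boundary $B^+ := B + \tfrac{1}{n} N^+$.

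Finally, I would check that $(X/Z \ni z, B^+ + M)$ is indeed a monotonic $n$-complement. The inequalities $nB^+ \geq n\lfloor B\rfloor + \lfloor(n+1)\{B\}\rfloor$ follow by construction from the choice of $P$ and the monotonic property on $S$; the identity $n(K_X + B^+ + M) \sim 0$ over a neighborhood of $z$ is automatic from $N \sim -n(K_X + B + M) + nP$; the remaining g-lc property is verified using Kollár--Shokurov connectedness of non-klt centers (in the generalized setting) applied over $z$: any extra non-klt center of $(X, B^+ + M)$ other than $S$ must meet $S$, and then restricting back to $S$ would contradict the g-lc-ness of the complement on $S$. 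The main obstacle is this last step — ensuring the lifted section does not introduce spurious non-klt centers of $(X, B^+ + M)$ over $z$, which forces a careful choice of the auxiliary perturbation $L$ and use of the fact that $-(K_X + \Sigma + \alpha M)$ is ample over $Z$ so that the extension argument is strictly restrictive enough to preserve the $n$-complement inequalities.
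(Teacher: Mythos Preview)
Your overall strategy---adjunction to $S$, induction in dimension $d-1$, lifting the complement via Kawamata--Viehweg vanishing, and then using connectedness to verify g-lc-ness---is exactly the paper's approach (which in turn follows \cite[\S 8]{Bir19}). However, the lifting paragraph contains a genuine gap rather than just missing details.

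You write: ``pick an integral Weil divisor $N$ on $X$ whose class represents $-n(K_X+B+M)+nP$, where $P\ge 0$ is the correction divisor defined so that pulling back along $S$ produces the divisor associated to $n(K_S+B_S^++M_S)\sim 0$.'' But there is no such $P$ on $X$: the complement on $S$ gives you a divisor $R_S$ on $S$, and there is no reason it should be the restriction of any effective divisor on $X$ before you have done the lifting. The paper (following Birkar) instead passes to a log resolution $f\colon X'\to X$, writes $B'=T'+\Delta'$ with $T'=\lfloor B'^{\ge 0}\rfloor$, and defines the integral divisor $L':=-nK_{X'}-nT'-\lfloor(n+1)\Delta'\rfloor-nM'$. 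The crucial ``correction'' is an \emph{exceptional} divisor $P'$ on $X'$, constructed componentwise so that $\Lambda':=\Sigma'+n\Delta'-\lfloor(n+1)\Delta'\rfloor+P'$ satisfies $\lfloor\Lambda'\rfloor=S'$ and $(X',\Lambda')$ is plt; exceptionality of $P'$ is what ensures it disappears on pushing down, so that one really obtains $-n(K_X+B+M)\sim_Z nR\ge 0$ on $X$ with no extraneous term. Your $P$ on $X$ cannot play this role.

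A second point you pass over too quickly: having lifted a section to get $G'$ with $G'|_{S'}=G_{S'}$, one must verify that the resulting $R:=\tfrac{1}{n}f_*G$ satisfies $R|_S=R_S$ exactly (not just in linear equivalence class). The paper checks this by computing $nR'|_{S'}=nR_{S'}$ on $X'$. Without this, you cannot invoke inversion of adjunction to conclude $(X,B^++M)$ is g-lc near $S$, and then your connectedness argument has no starting point: connectedness tells you the non-klt locus is connected over $z$, but you still need to know it is \emph{contained in} $S$ near $S$, which is precisely what inversion of adjunction from the $n$-complement on $S$ provides.
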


\begin{proof}
     We may find a boundary $\Sigma_1$ such that $(X,\Sigma_1)$ is plt, $-(K_X+\Sigma_1)$ is ample over $Z$, and $\lf \Sigma_1\rf=S$. By the same arguements as in \cite[Proposition 8.1]{Bir19}, $S\to \pi(S)$ is a contraction. 
     
     Possibly replacing $X'$ by a higher model, we may assume that $f:X'\to X$ is a log resolution of $(X,B+\Sigma)$, $rM'$ is Cartier, and the induced map $\phi:S'\to S$ is a morphism, where $S'$ is the strict transform of $S$ on $X'$. Let 
     $$N':=K_{X'}+B'+M'=f^*(K_X+B+M),$$ and 
     $$K_{X'}+\Sigma'+\alpha M':=f^*(K_X+\Sigma+\alpha M).$$ 
     Then we have the generalized adjunction
     $$K_{S}+B_S+M_S\sim_{\Qq}(K_X+B+M)|_S$$
     such that $rM_{S'}$ is Cartier. Moreover, by \cite[3.1(2)]{Bir19}, we may assume that
     $$r(K_{S}+B_S+M_S)\sim r(K_X+B+M)|_S.$$    
     
     By \cite[Lemma 3.3]{Bir19}, $B_S\in\Phi(\Ii_1)$ for some finite set $\Ii_1\subseteq[0,1]\cap\Qq$ depending only on $r$ and $\Ii$. Restricting $K_X+\Sigma+\alpha M$ to $S$ shows that $S$ is a of Fano type over $\pi(S)$ by \cite[2.13(6)]{Bir19}. Thus there exists a positive integer $n$ divisible by $r$ depending only on $d-1,r$ and $\Ii_1$ such that $(S,B_S+M_S)$ has a monotonic $n$-complement $(S,B_{S}^++M_S)$ over $z$, where $B_{S}^+:=B_S+R_S$ for some $\Rr$-divisor $R_S\ge0$. Possibly replacing $n$ by a larger number, we may assume that $n\Ii\subseteq\Zz$. Let $R_{S'}:=\phi^*R_S$, then we have
     \begin{align*}
       nN'|_{S'}\sim -n\phi^*(K_S+B_S+M_S)\sim nR_{S'}\ge0.
     \end{align*}
     In the following, we want to lift $R_{S'}$ from $S'$ to $X'$.
     
     Let $T':=\lf B'^{\ge0}\rf$ and $\Delta':=B'-T'$. Define
     $$L':=-nK_{X'}-nT'-\lf (n+1)\Delta'\rf-nM'=n\Delta'-\lf (n+1)\Delta'\rf+nN',$$
     which is an integral divisor. 
     Possibly replacing $\Sigma'$ by $(1-a)\Sigma'+aB'$, and $\alpha M'$ by $((1-a)\alpha+a)M'$ for some $a\in(0,1)$ sufficiently close to $1$, we may assume that $\alpha$ is sufficiently close to $1$ and $B'-\Sigma'$ has sufficiently small coefficients.
     
     We claim that there exists a divisor $P'$ on $X'$, such that $P'$ is exceptional over $X$, $(X',\Lambda')$ is plt and $\lf \Lambda'\rf=S'$, where $\Lambda':=\Sigma'+n\Delta'-\lf (n+1)\Delta'\rf+P'.$ Indeed let $\mult_{S'}P'=0$, and for each prime divisor $D'\neq S',$ let
     $$\mult_{D'}P'=-\mult_{D'}\lf\Sigma'+n\Delta'-\lf(n+1)\Delta'\rf \rf=-\mult_{D'}\lf\Sigma'-\Delta'+\{(n+1)\Delta'\} \rf.$$
     This implies that $0\le \mult_{D'}P'\le 1$ for any prime divisor $D'$. In fact if $D'\neq S'$ is a component of $T'$, then $D'$ is not a component of $\Delta'$ and $\mult_{D'} \Sigma'\in(0,1)$, hence $\mult_{D'}P'=0.$ If $D'$ is not a component of $T'$, then $\mult_{D'}(\Sigma'-\Delta')=\mult_{D'}(\Sigma'-B')$ is sufficiently small and $\mult_{D'}P'=0.$
     
     It suffices to show that $P'$ is exceptional over $X$. Assume that $D'$ is a component of $P'$ that is not exceptional over $X$. Then $D'\neq S'$, and since $nB$ is integral, $\mult_{D'}n\Delta'$ is integral. Hence $\mult_{D'}\lf(n+1)\Delta'\rf=\mult_{D'}\lf n\Delta'\rf$ which implies that $\mult_{D'} P'=-\mult_{D'}\lf\Sigma'\rf=0$, a contradiction.    
       
     By construction,
     \begin{align*}
        (L'+P')|_{S'}&=(n\Delta'-\lf(n+1)\Delta'\rf+nN'+P')|_{S'}
                     \\&\sim_Z nR_{S'}+n\Delta_{S'}-\lf(n+1)\Delta_{S'}\rf+P_{S'}=:G_{S'},     
     \end{align*}
     where $\Delta_{S'}:=\Delta'|_{S'}$ and $P_{S'}:=P'|_{S'}$. Note that $G_{S'}$ is integral by the choice of $R_{S'}$. Moreover, as the coefficients of $n\Delta_{S'}-\lf(n+1)\Delta_{S'}\rf$ belong to $(-1,1)$, $G_{S'}\ge0$.
     
     Let $A:=-(K_X+\Sigma+\alpha M),$ and $A':=f^*A$. Then
     \begin{align*}
        L'+P'&=n\Delta'-\lf(n+1)\Delta'\rf+nN'+P'
            \\&=K_{X'}+\Sigma'+\alpha M'+A'+n\Delta'-\lf(n+1)\Delta'\rf+nN'+P'
            \\&=K_{X'}+\Lambda'+A'+\alpha M'+nN'.
     \end{align*}
     Possibly shrinking $Z$ near $z$, we may assume that $Z$ is affine. Since $A'+\alpha M'+nN'$ is nef and big over $Z$, and $(X',\Lambda'-S')$ is klt, $h^1(L'+P'-S)=0$ by the relative Kawamata-Viehweg vanishing theorem \cite[Theorem 1-2-5]{KMM85}, and hence
     $$H^0(L'+P')\to H^0((L'+P')|_{S'})$$
     is surjective. Therefore, there exists $G'\ge0$ on $X'$ such that $L'+P'\sim G'$ and $G'|_{S'}=G_{S'}.$
     
     As $P'$ is exceptional over $X$, we have
     $$f_*(L'+P')=L=-nK_X-nT-\lf(n+1)\Delta\rf-nM\sim_Z G\ge0,$$
     where $T,\Delta,L,G$ are the strict transforms of $T',\Delta',L',G'$ respectively. Since $nB$ is integral, $\lf(n+1)\Delta\rf=n\Delta,$ and
     $$-n(K_X+B+M)=-nK_X-nT-n\Delta-nM=L\sim_Z G=:nR\ge0.$$
     Let $B^+:=B+R$. Then we have $n(K_X+B^++M)\sim_{Z}0$.
     
     It is enough to show that $(X,B^++M)$ is g-lc over some neighborhood of $z$ since then $(X/Z\ni z,B^++M)$ is a monotonic $n$-complement of $(X/Z\ni z,B+M)$. 
     
     We first show that $R|_S=R_S$. Since
     $$nR':=G'-P'+\lf(n+1)\Delta'\rf-n\Delta'\sim L'+\lf(n+1)\Delta'\rf-n\Delta'=nN'\sim_{\Qq,X}0,$$
     and $\lf(n+1)\Delta\rf=n\Delta$, we have that $f_*(nR')=G=nR$ and $R'=f^*R$. Thus
     \begin{align*}
     nR_{S'}&=G_{S'}-P_{S'}+\lf(n+1)\Delta_{S'}\rf-n\Delta_{S'}\\&=(G'-P'+\lf(n+1)\Delta'\rf-n\Delta')|_{S'}=nR'|_{S'},
     \end{align*}
     which means that $R_{S'}=R'|_{S'}$. In particular, $R_S=R|_S$ and
     $$K_S+B_S^++M_S=K_S+B_S+R_S+M_S= (K_X+B^++M)|_S.$$
     By the inversion of generalized adjunction \cite[Lemma 3.2]{Bir19}, $(X,B^++M)$ is g-lc near $S$. 
     
     Suppose that $(X,B^++M)$ is not g-lc. Then there exists a real number $a\in(0,1)$ which is sufficiently close to $1$, such that $(X, (aB^++(1-a)\Sigma)+((1-a)\alpha+a)M)$ is not g-lc and is g-lc near $S$. In particular, the generalized non-klt locus of $(X,(aB^++(1-a)\Sigma)+((1-a)\alpha+a)M)$ is not connected. Since
     $-(K_X+(aB^++(1-a)\Sigma)+((1-a)\alpha+a)M)=-a(K_X+B^++M)-(1-a)(K_X+\Sigma+\alpha M)$
     is ample over $Z$, it contracts the connectedness principle \cite[Lemma 2.14]{Bir19}. Therefore $(X,B^++M)$ is g-lc over a neighborhood of $z$.
\end{proof}

\begin{proof}[Proof of Theorem \ref{thm: relncomplforfiniterat}]
     We show the statement by induction on the dimension. Assume that Theorem \ref{thm: relncomplforfiniterat} holds in dimension $d-1$.
          
     We may assume that $1\in\Ii$. According to Theorem \ref{thm: dcc limit glc divisor}, we may assume that $B\in\Ii$ and $(X,B+M)$ is $\Rr$-complementary. 
     
     Let $N\ge0$ be a Cartier divisor on $Z$ passing through $z$, and $t$ the g-lc threshold of $\pi^*N$ with respect to $(X,B+M)$ over a neighborhood of $z$, where $\pi$ is the morphism $X\to Z$. Let $\Omega_0:=B+t\pi^*N$. Possibly shrinking $Z$ near $z$, we may assume that $(X,\Omega_0+M)$ is g-lc. Let $(X'',\Omega_0''+M'')$ be a g-dlt modification of $(X,\Omega_0+M)$. Then $X''$ is of Fano type over $Z$. Moreover, there exists a boundary $\Omega_1''\le\Omega_0''$ such that $\Omega_1''\in\Ii$, some component of $\lf \Omega_1''\rf$ intersecting $\pi^{-1}(z)$, and $B\le \Omega_1,$ where $\Omega_1$ is the strict transform of $\Omega_1''$ on $\Omega_1$.
     We may run an MMP$/Z$ on $-(K_{X''}+\Omega_1''+M'')$ which terminates with a model $X'''$ such that $-(K_{X'''}+\Omega_1'''+M''')$ is nef over $Z$, where $\Omega_1''',M'''$ are the strict transforms of $\Omega_1'',M''$ respectively. Since $(X'',\Omega_0''+M'')$ is $\Rr$-complementary, $(X''',\Omega_0'''+M''')$ is g-lc and so is $(X''',\Omega_1'''+M''')$. Moreover, no component of $\lf \Omega_1''\rf$ is contracted by the MMP, as $a(S'',X'',\Omega_1''+M'')<0$ for any contracted divisor $S''$. Possibly replacing $(X,B+M)$ by $(X''',\Omega_1'''+M''')$, we may assume that $X$ is $\Qq$-factorial, $-(K_X+B+M)$ is nef over $Z$, and $\Supp\lf B\rf$ intersecting $\pi^{-1}(z)$.
     
     We claim that there exist boundaries $\tilde{\Delta}\le\Delta$ such that $-(K_X+\Delta+\alpha M)$ and $-(K_X+\tilde{\Delta}+\alpha M)$ are nef and big over $Z$, some component of $\lf\Delta\rf$ intersects $\pi^{-1}(z)$, $(X,\Delta+\alpha M)$ is g-dlt, and $(X,\tilde{\Delta}+\alpha M)$ is g-klt for some $\alpha\in(0,1)$.
     
     Since $-K_X$ is big over $Z$ and $-(K_X+B+M)$ is nef over $Z$,
     $$-(K_X+\alpha B+\alpha M)=-\alpha(K_X+B+M)-(1-\alpha)K_X$$
     is big over $Z$ for any $\alpha\in(0,1)$. Assume that $\alpha$ is sufficiently close to $1$, we define $\Delta$ as follows. For any prime divisor $D$ which is vertical over $Z$, we let $\mult_D\Delta=\mult_DB$, otherwise let $\mult_D\Delta=\mult_D\alpha B$. Then $(X,\Delta+\alpha M)$ is g-lc, $\alpha B\le \Delta\le B$, $\Supp\lf \Delta\rf$ intersects $\pi^{-1}(z)$, and $-(K_X+\Delta+\alpha M)$ is big over $Z$ as $\Delta=\alpha B$ near the generic fiber.
     
     Let $X\to V/Z$ be the contraction defined by $-(K_X+B+M)$. We may run an MMP$/V$ on $-(K_X+\Delta+\alpha M)$ which terminates with a model $X_1''$ such that $-(K_{X_1''}+\Delta_1''+\alpha M_1'')$ is nef over $V$, where $\Delta_1'',M_1''$ are the strict transforms of $\Delta,M$ respectively. 
     Possibly replacing $\Delta$ by $aB+(1-a)\Delta$ for some $a\in(0,1)$ sufficiently close to $1$, we may assume that $-(K_{X_1''}+\Delta_1''+\alpha M_1'')$ is nef and big over $Z$. Possibly replacing $(X,\Delta+\alpha M)$ by $(X_1'',\Delta_1''+\alpha M_1'')$ and $B$ by its strict transform, we may assume that $-(K_X+\Delta+\alpha M)$ is nef and big over $Z$. 
     
     Let $X\to T/Z$ be the morphism define by $-(K_X+\Delta+\alpha M)$, $\tilde{\Delta}:=\beta\Delta$ for some $\beta\in(0,1)$. We may run an MMP$/T$ on $-(K_X+\tilde{\Delta}+\alpha M)$ and terminates with $X_2''$. Note that the MMP is $(K_X+B+M)$-trivial, as it is $(K_X+\Delta+\alpha M)$-trivial and $\Delta\le B$. Possibly replacing $X$ by $X_2''$, and also the corresponding divisors, and pick $\beta$ sufficiently close to $1$, we may assume that $-(K_X+\tilde{\Delta}+\alpha M)$ is nef and big over $Z$. Possibly replacing $(X,B+M)$ by a g-dlt modification, increasing $\alpha,\beta$, and replacing $(X,\Delta+\alpha M)$ and $(X,\tilde{\Delta}+\alpha M)$ by its crepant pullbacks, the claim holds. Moreover, possibly shrinking $Z$ near $z$, we may assume that every component of $\lf\Delta\rf$ intersects $\pi^{-1}(z)$.
     
     We may write $-(K_X+\Delta+\alpha M)\sim_{\Rr,Z}A+G$, where $A\ge0$ is ample$/Z$ and $G\ge0$. Suppose that $\Supp G$ does not contain any generalized non-klt center of $(X,\Delta+\alpha M)$. Then $(X,(\Delta+\delta G)+\alpha M)$ is g-dlt for some sufficiently small positive real number $\delta$. Moreover,
     $$-(K_X+\Delta+\delta G+\alpha M)\sim_{\Rr,Z}\delta A+(1-\delta)(A+G)$$
     is ample over $Z$. There exists a boundary $\Sigma$ such that $(X,\Sigma+\alpha M)$ is generalized plt, $S:=\lf\Sigma\rf\subseteq\lf B\rf$ is irreducible and intersects $\pi^{-1}(z)$, and $-(K_X+\Sigma+\alpha M)$ is ample over $Z$. By Proposition \ref{prop: relcomplspecalcase}, the theorem holds. In the following, we may assume that $\Supp G$ contains some generalized non-klt center of $(X,\Delta+\alpha M)$.   
     
     Possibly replacing $\tilde{\Delta}$, we may assume that $\Delta-\tilde{\Delta}$ has sufficiently small coefficients, and the g-lc threshold $t$ of $G+\Delta-\tilde{\Delta}$ with respect to $(X,\tilde{\Delta}+\alpha M)$ over a neighborhood of $z$ is sufficiently small such that any generalized non-klt center of $(X,\Omega+\alpha M)$ is a generalized non-klt center of $(X,\Delta+\alpha M)$, where $\Omega:=\tilde{\Delta}+t(G+\Delta-\tilde{\Delta})$. Moreover,
     \begin{align*}
          &\ \ \ -(K_X+\Omega+\alpha M)=-(K_X+\tilde{\Delta}+t(G+\Delta-\tilde{\Delta})+\alpha M)
          \\&=-(K_X+\Delta+\alpha M)+\Delta-\tilde{\Delta}-t(G+\Delta-\tilde{\Delta})
          \\&\sim_{\Rr,Z}A+G-tG+(1-t)(\Delta-\tilde{\Delta})
          =tA+(1-t)(A+G+\Delta-\tilde{\Delta})
          \\&\sim_{\Rr,Z}tA-(1-t)(K_X+\tilde{\Delta}+\alpha M)
     \end{align*}
     is ample over $Z$.
     
     If $\lf\Omega\rf\neq0$, then there exist a component $S$ of $\lf\Omega\rf\subseteq\lf\Delta\rf\subseteq\lf B\rf$ and a boundary $\Sigma_0$ such that $(X,\Sigma_0+\alpha M)$ is generalized plt, $S=\lf\Sigma_0\rf$ intersects $\pi^{-1}(z)$, and $-(K_X+\Sigma_0+\alpha M)$ is ample over $Z$. We are done by Proposition \ref{prop: relcomplspecalcase}.
     
     Suppose that $\lf\Omega\rf=0$. Let $(X_3'',\Omega_3''+\alpha M_3'')$ be a g-dlt modification of $(X,\Omega+\alpha M)$. Possibly shrinking $Z$ near $z$, we may assume that every component of $\lf\Omega_3''\rf$ intersects $\pi^{-1}(z)$. We may run a g-MMP$/Z$ on $K_{X_3''}+\lf\Omega_3''\rf+\alpha M_3$ which terminates with $X$ since $\lf\Omega_3''\rf$ is the reduced exceptional divisor of $X_3''\to X$ and $(X,\alpha M)$ is g-klt. Let $X_3'''\to X$ be the last step of the g-MMP which is a divisorial contraction contracting a prime divisor $S_3'''$. Let $(X_3''',B_3'''+M_3''')$ be the crepant pullback of $(X,B+M)$, then $S_3'''$ is a component of $\lf B_3'''\rf$. We finish the proof by Proposition \ref{prop: relcomplspecalcase}.
\end{proof}

\section{Proof of Theorem \ref{thm: (n,I)complforfiniterat}}\label{section4}

\subsection{From the DCC set to a finite set}

\begin{thm}\label{thm: dcc limit glc divisor}
     Let $d$ be a positive integer, $\alpha$ a positive real number, and $\Ii\subseteq [0,1]$ a DCC set. Then there exists a finite set $\Ii'\subseteq \bar\Ii$ and a projection $g:\bar\Ii\to \Ii'$ depending only on $d,\alpha$ and $\Ii$ satisfying the following. 

     Assume that $(X,(\sum_{i=1}^sb_iB_i)+(\sum_{j=1}^l\mu_jM_j))$ is a $\Qq$-factorial generalized pair with data $X'\xto X\rightarrow Z$ and $\sum_{j=1}^l\mu_jM_j'$ such that
     \begin{itemize}
       \item $(X,(\sum_{i=1}^sb_iB_i)+(\sum_{j=1}^l\mu_jM_j))$ is g-lc of dimension $d$,
      \item $B_i\ge0$ is a Weil divisor and $b_i\in\Ii$ for any $1\le i\le s$, and
      \item $\mu_j\in\Gamma$ and $M_j'$ is b-Cartier nef$/Z$ for any $1\le j\le l$.
     \end{itemize}
     Then
\begin{enumerate}
	\item  $\gamma+\alpha \ge g(\gamma)\ge \gamma$ for any $\gamma\in\Ii$, 
\item $g(\gamma')\ge g(\gamma)$ for any $\gamma'\ge \gamma$, $\gamma,\gamma'\in\Ii$, and
	\item  $(X,(\sum_{i=1}^s g(b_i)B_i)+(\sum_{j=1}^l g(\mu_j)M_j))$ is g-lc.
\end{enumerate}
\end{thm}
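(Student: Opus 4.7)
I combine a direct construction of the candidate finite set $\Ii'\subseteq\bar{\Ii}$ and rounding map $g$ with a contradiction argument invoking Theorem~\ref{thm: uniformpolytopeforgenlc}.

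Since $\Ii\subseteq[0,1]$ is DCC, every accumulation point of $\Ii$ is approached strictly from below, so the set $\Ii^*\subseteq\bar{\Ii}$ of accumulation points is closed in $[0,1]$ and hence compact. Consequently $\Ii\setminus\bigcup_{a\in\Ii^*}(a-\epsilon,a]$ is finite for every $\epsilon>0$. Covering $\Ii^*$ by finitely many half-open intervals of length at most $\alpha/4$ with right endpoints in $\bar{\Ii}$ and adjoining the remaining (finitely many) isolated elements of $\Ii$ produces a finite $\Ii'\subseteq\bar{\Ii}$. Defining $g(\gamma):=\min\{\beta\in\Ii'\mid\beta\ge\gamma\}$ makes conditions (1) and (2) automatic.

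For condition (3) I argue by contradiction. If no refinement of the above construction works, take a nested sequence $\Ii_1'\subseteq\Ii_2'\subseteq\cdots\subseteq\bar{\Ii}$ of finite sets with mesh tending to $0$, and for each $k$ fix a g-lc counterexample pair $(X_k,(\sum_i b_i^{(k)}B_i^{(k)})+(\sum_j \mu_j^{(k)}M_j^{(k)}))$ of dimension $d$ whose rounding via the associated $g_k$ is not g-lc. By DCC only finitely many values of $\Ii$ exceed any $\delta>0$, so after combining components sharing a coefficient and passing to a subsequence, the number of distinct coefficient values is a constant $s+l$. By DCC each coordinate sequence $v_i^{(k)}$ converges to some $v_i^*\in\bar{\Ii}$ (either stabilising or strictly increasing), and for $k$ large the rounded coefficient vector equals the limit $\bm{v}^*:=(v_1^*,\dots,v_{s+l}^*)$.

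Now I apply Theorem~\ref{thm: uniformpolytopeforgenlc} with basepoint $\bm{v}_0:=\bm{v}^{(k)}$ and $m:=s$. Since $(X_k,\Theta_k)$ is g-lc with coefficient vector $\bm{v}^{(k)}$, this supplies an open set $U_k\ni\bm{v}^{(k)}$ inside the rational envelope $V_k$ of $\bm{v}^{(k)}$, depending only on $d,s,\bm{v}^{(k)}$, inside which g-lc is preserved. The desired contradiction would follow from $\bm{v}^*\in U_k$ for some large $k$. The main obstacle is uniformity as the basepoints $\bm{v}^{(k)}$ vary: the envelopes $V_k$ can shift with $k$ and $U_k$ a priori shrinks. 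I plan to resolve this by inspecting the proof of Theorem~\ref{thm: uniformpolytopeforgenlc} via Theorem~\ref{thm: uniformperturbationofglc}, which shows that the size of the neighborhood depends only on the $\Qq$-linear data $(r_1,\dots,r_c)$ and $(s_1,\dots,s_{m+l})$ encoding the entries of the basepoint. Passing to a further subsequence these $\Qq$-linear data stabilise (being indexed by finite combinatorial information on a bounded number of coordinates), so the envelopes $V_k$ collapse to a fixed $\Qq$-affine subspace $V\ni\bm{v}^*$, and the neighborhoods acquire a uniform lower bound containing $\bm{v}^*$ for $k$ large. Feeding this into the counterexample pair gives the g-lc of the rounded pair, contradicting the assumption and completing the proof.
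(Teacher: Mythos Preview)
Your argument has a genuine gap at the point where you invoke Theorem~\ref{thm: uniformpolytopeforgenlc}. That theorem only produces an open neighborhood of $\bm v_0$ \emph{inside the rational envelope $V$ of $\bm v_0$}, and $V$ depends on the $\Qq$-linear relations among the coordinates of $\bm v_0$. In particular, if $\bm v^{(k)}\in\Qq^{s+l}$ (which is exactly what happens whenever $\Ii\subseteq\Qq$, e.g.\ the hyperstandard set), then $V_k=\{\bm v^{(k)}\}$ and $U_k=\{\bm v^{(k)}\}$, so the theorem says nothing about the limit point $\bm v^*$. Your claim that the ``$\Qq$-linear data stabilise'' is false: the data in Theorem~\ref{thm: uniformperturbationofglc} consist of the actual irrational numbers $r_1,\dots,r_c$, not just finite combinatorial information, and these change with $k$ (or are empty). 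Even when the coordinates are irrational, there is no reason for $\bm v^*$ to lie in the rational envelope of $\bm v^{(k)}$, let alone in a neighborhood whose size is controlled uniformly in $k$. In short, Theorem~\ref{thm: uniformpolytopeforgenlc} is designed to perturb \emph{within} a fixed rational envelope and cannot handle a DCC sequence of basepoints converging to a limit outside those envelopes.

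The paper's proof takes a completely different and much shorter route: it uses the ACC for generalized lc thresholds (Theorem~\ref{thm: ACCforglcts}). One lets $\Ii''$ be the closure of the set of g-lc thresholds arising from data in $\Ii$, which is ACC, and then chooses the finite set $\Ii'$ and the map $g$ (via \cite[Lemma~5.17]{HLS19}) so that for every $\gamma\in\Ii$ no element of $\Ii''$ lies in the half-open interval $[\gamma,g(\gamma))$. With this choice, increasing one coefficient $b_{s'+1}$ to $g(b_{s'+1})$ cannot cross a g-lc threshold, so g-lc is preserved; iterating over the $b_i$ and then the $\mu_j$ gives the result. Your combinatorial construction of $\Ii'$ in the first paragraph satisfies (1) and (2) but carries no information about thresholds, so it cannot by itself yield (3); the threshold condition is precisely the missing idea.
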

The proof of Theorem \ref{thm: dcc limit glc divisor} is very similar to \cite[Theorem 5.18]{HLS19}.
\begin{proof}
     Possibly replacing $\Ii$ by $\bar\Ii$, we may assume that $\Ii=\bar\Ii$. Let $\Ii'':=\overline{g\mathcal{LCT}(d,\Ii)}$ which is an ACC set by Theorem \ref{thm: ACCforglcts}. By \cite[Lemma 5.17]{HLS19}, there exists a finite set $\Ii'\subseteq\Ii$ and a projection $g:\Ii\to \Ii'$ such that
      \begin{itemize}
 	\item $\gamma+\alpha \ge g(\gamma)\ge \gamma$ for any $\gamma\in \Ii$, 
 	\item $g(\gamma')\ge g(\gamma)$ for any $\gamma'\ge \gamma$, $\gamma,\gamma'\in\Ii$, and
 	\item $\beta\ge g(\gamma)$ for any $\beta\in \Ii''$ and $\gamma\in \Ii$ with $\beta\ge \gamma$.
     \end{itemize}
     
     It suffices to show that $(X,(\sum_{i=1}^s g(b_i)B_i)+(\sum_{j=1}^l g(\mu_j)M_j))$ is g-lc. We first show that $(X,(\sum_{i=1}^s g(b_i)B_i)+(\sum_{j=1}^l \mu_jM_j))$ is g-lc. Otherwise, there exists $0\le s'\le s-1$, such that $(X,(\sum_{i=1}^{s'} g(b_i)B_i+\sum_{i=s'+1}^s b_iB_i)+(\sum_{j=1}^l \mu_jM_j))$ is g-lc, and $(X,(\sum_{i=1}^{s'+1} g(b_i)B_i+\sum_{i=s'+2}^s b_iB_i)+(\sum_{j=1}^l \mu_jM_j))$ is not g-lc. Let 
     $$\beta:=\lct(X,(\sum_{i=1}^{s'} g(b_i)B_i+\sum_{i=s'+2}^s b_iB_i)+(\sum_{j=1}^l\mu_jM_j);B_{s'+1}+0).$$
     Then $g(b_{s'+1})> \beta\ge b_{s'+1}$. Since $g(b_i),b_i\in\Ii$ for any $i$, $\beta\in \Ii''$ and $\beta\ge g(b_{s'+1})$, a contradiction. Thus $(X,(\sum_{i=1}^s g(b_i)B_i)+(\sum_{j=1}^l \mu_jM_j))$ is g-lc. Then by the same argument as above, the claim holds. 
\end{proof}

\begin{thm}\label{thm: dcc limit lc complementary}
	Let $d$ be a positive integer, and $\Ii\subseteq [0,1]$ a DCC set. Then there exists a finite set $\Ii'\subseteq \bar\Ii$, and a projection $g:\bar\Ii\to \Ii'$ depending only on $d$ and $\Ii$ satisfying the following. 
	
	Assume that $(X,(\sum b_iB_i)+(\sum\mu_jM_j))$ is a $\Qq$-factorial generalized pair with data $X'\xto X\to Z$ and $\sum\mu_jM_j'$ such that
	\begin{itemize}
	    \item $\dim X=d$, 
	    \item $X$ is of Fano type over $Z$,
	    \item $B_i\ge0$ is a Weil divisor and $b_i\in\Ii$ for any $i$,
	    \item $\mu_j\in\Gamma$ and $M_j'$ is b-Cartier nef$/Z$ for any $j$, and
	    \item $(X/Z,(\sum b_iB_i)+(\sum\mu_jM_j))$ is $\Rr$-complementary.
	\end{itemize}
	Then
	\begin{enumerate}
		\item  $g(\gamma)\ge \gamma$ for any $\gamma\in \Ii$,
	    \item $g(\gamma')\ge g(\gamma)$ for any $\gamma'\ge \gamma$, $\gamma,\gamma'\in\Ii$, and
		\item  $(X/Z,(\sum g(b_i)B_i)+(\sum g(\mu_j)M_j))$ is $\Rr$-complementary.
	\end{enumerate}
\end{thm}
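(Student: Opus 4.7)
My plan is to mirror the proof of Theorem \ref{thm: dcc limit glc divisor}, substituting the ACC set of g-lc thresholds $g\mathcal{LCT}(d,\Ii)$ with an analogous ACC set of ``$\Rr$-complementary thresholds''. To this end, I first introduce the set
\[
\mathcal{RCT}(d,\Ii):=\Bigl\{\, t \;\Big|\; t = \sup\{s\ge 0 \mid (X/Z,(B+sD)+(M+sN))\text{ is $\Rr$-complementary}\}\,\Bigr\},
\]
ranging over all $\Qq$-factorial generalized pairs $(X,B+M)$ of dimension $d$ with data $X'\to X\to Z$ and $M'$, with $X$ of Fano type over $Z$, coefficients of $B$ and the $\mu_j$'s in $\Ii$, and perturbation data $D\ge 0$ a Weil divisor on $X$ and $N'\ge 0$ a b-Cartier nef$/Z$ divisor with coefficients in $\Ii$.

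The essential technical step is to prove that $\mathcal{RCT}(d,\Ii)$ satisfies the ACC. I would attack this by contradiction: given a strictly increasing sequence $t_n\to t_\infty$ with $t_n\in \mathcal{RCT}(d,\Ii)$ realized by pairs $(X_n,B_n+M_n)$ and data $(D_n,N_n)$, pick $\Rr$-complements $(X_n, B_n^++M_n^+)$ of $(X_n/Z, (B_n+t_nD_n)+(M_n+t_nN_n))$ and apply Theorem \ref{thm: uniformpolytopeforRcomp} to the coefficient vector at $t=t_\infty$. The uniform open neighborhood of $\Rr$-complementarity produced by that theorem (which depends only on $d$ and the rational envelope of the limiting coefficient vector) will extend the $\Rr$-complementarity beyond $t_n$ for all sufficiently large $n$, contradicting the maximality in the definition of $t_n$. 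Global ACC (Theorem \ref{thm: globalACC}), applied to a general fiber of the complement $K_{X_n}+B_n^++M_n^+\equiv 0/Z$, enters this argument to ensure that the coefficients of the complements themselves lie in a DCC set, so that the rational envelope of the limit is well behaved.

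Granted ACC of $\mathcal{RCT}(d,\Ii)$, let $\mathcal{RCT}'(d,\Ii):=\overline{\mathcal{RCT}(d,\Ii)}$, which is also ACC. Next I would combine this with the set $g\mathcal{LCT}(d,\Ii)$ used in Theorem \ref{thm: dcc limit glc divisor} and apply \cite[Lemma 5.17]{HLS19} to $\Ii$ and the ACC set $g\mathcal{LCT}(d,\Ii)\cup\mathcal{RCT}'(d,\Ii)$ to obtain a finite subset $\Ii'\subseteq\bar\Ii$ and a projection $g:\bar\Ii\to\Ii'$ that (a) is monotone non-decreasing with $g(\gamma)\ge\gamma$, and (b) has the separation property that no element of $g\mathcal{LCT}(d,\Ii)\cup\mathcal{RCT}'(d,\Ii)$ lies in any interval $(\gamma,g(\gamma))$ with $\gamma\in\bar\Ii$.

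Finally, to verify assertion (3), I would increase the coefficients one at a time, exactly as in the proof of Theorem \ref{thm: dcc limit glc divisor}. If replacing $b_{s'+1}$ by $g(b_{s'+1})$ (with the earlier coefficients already upgraded via $g$) were to destroy $\Rr$-complementarity, the corresponding threshold would lie in $[b_{s'+1}, g(b_{s'+1}))\cap \mathcal{RCT}'(d,\Ii)$, contradicting the separation property of $g$. The same argument applies to the coefficients $\mu_j$ of the nef part. The hard part is clearly establishing ACC of $\mathcal{RCT}(d,\Ii)$: the uniform rational polytope result (Theorem \ref{thm: uniformpolytopeforRcomp}) should carry the burden here, but its combination with a limiting argument on the $\Rr$-complements must be executed carefully to rule out the strictly increasing sequence.
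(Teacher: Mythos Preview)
Your overall strategy---construct an ACC set of ``$\Rr$-complementary thresholds'' and then invoke \cite[Lemma 5.17]{HLS19}---is natural, but the proposed proof that $\mathcal{RCT}(d,\Ii)$ is ACC has a genuine gap, and it is precisely at the point you flag as ``the hard part''.

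Theorem~\ref{thm: uniformpolytopeforRcomp} takes as input a \emph{fixed} point $\bm{v}_0$ and produces an open set $U\ni\bm{v}_0$ depending on $d,m,l$ and $\bm{v}_0$; moreover its hypothesis is that the pair \emph{at} $\bm{v}_0$ is $\Rr$-complementary, and its conclusion extends this to all of $U$. In your contradiction, the coefficient vector ``at $t=t_\infty$'' depends on $n$ (the coefficients of $B_n,M_n,D_n,N_n$ all vary inside the infinite DCC set $\Ii$), so there is no single $\bm{v}_0$ to feed into the theorem. Even if you fix $n$, the pair at $t=t_\infty$ is \emph{not} known to be $\Rr$-complementary---indeed $t_\infty>t_n$ is beyond the threshold---so the hypothesis of Theorem~\ref{thm: uniformpolytopeforRcomp} fails. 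And applying the theorem instead at $t=t_n$ gives a neighbourhood whose size depends on the specific $\bm{v}_0$ and hence on $n$, so nothing prevents it from shrinking to zero. Your appeal to Global ACC to control the coefficients of the $\Rr$-complements $B_n^+$ does not rescue this: an $\Rr$-complement only satisfies $B_n^+\ge B_n+t_nD_n$, so its coefficients lie in $[0,1]$ with no a priori DCC constraint, and Theorem~\ref{thm: globalACC} needs the coefficients to come from a fixed DCC set to begin with.

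The paper avoids this difficulty by not defining a threshold set at all. Instead it argues in two stages. First, it shows that for a suitable projection $g$ the upgraded divisor $-(K_X+\sum g(b_i)B_i+\sum g(\mu_j)M_j)$ remains \emph{pseudo-effective} over $Z$: assuming this fails for every projection $g_k$ with $g_k(\gamma)-\gamma<1/k$, one runs an MMP on the (non-pseudo-effective) anti-log-canonical to a Mori fibre space, and on a general fibre obtains $K_{F_k}+B_{F_k}^++M_{F_k}\equiv 0$ where all coefficients lie in the DCC set $\bar\Ii$ \emph{except one}, and that one is strictly increasing---a direct contradiction to Theorem~\ref{thm: globalACC}. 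Second, once pseudo-effectivity (and g-lc, from Theorem~\ref{thm: dcc limit glc divisor}) is secured, running an MMP on the anti-log-canonical reaches a model where it is semiample over $Z$, hence $\Rr$-complementary, and this pulls back. The key point is that the paper's contradiction isolates a single varying coefficient sandwiched between two elements of $\bar\Ii$; your $\mathcal{RCT}$ framework discards exactly this structure.
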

The proof of Theorem \ref{thm: dcc limit lc complementary} is very similar to that of \cite[Theorem 5.20]{HLS19}.
\begin{proof}
	We first claim that there exist a finite set $\Ii'\subseteq \bar\Ii$, and a projection $g:\bar\Ii\to \Ii'$ depending only on $d$ and $\Ii$ such that for any generalized pair $(X,(\sum b_iB_i)+(\sum \mu_jM_j))$ with data $X'\xto X\to Z$ and $\sum \mu_jM_j'$ satisfying the conditions, then
     \begin{itemize}
		\item[(i)]  $g(\gamma)\ge \gamma$ for any $\gamma\in \Ii$,
	    \item[(ii)] $g(\gamma')\ge g(\gamma)$ for any $\gamma'\ge \gamma$, $\gamma,\gamma'\in\Ii$,
		\item[(iii)]  $(X,(\sum g(b_i)B_i)+(\sum g(\mu_j)M_j))$ is g-lc, and
		\item[(iv)] $-(K_X+\sum g(b_i)B_i+\sum g(\mu_j)M_j)$ is pseudo-effective over $Z$.
	\end{itemize}
	
	We may assume that $1\in\Ii$. We first show that there exists a finite set $\Gamma'\subseteq\bar{\Gamma}$ and a projection $g:\bar{\Gamma}\to\Gamma'$ satisfy (i), (ii), (iii) and
	\begin{itemize}
	  \item[(iv)'] $-(K_X+\sum g(b_i)B_i+\sum \mu_jM_j)$ is pseudo-effective over $Z$.
	\end{itemize}
	Suppose on the contrary and by Theorem \ref{thm: dcc limit glc divisor}, there exist a sequence of $d$-dimensional generalized pairs $(X_{k},B_{(k)}+M_{(k)})$ with data $X_k'\xrightarrow{f_{k}} X_k\to Z_{k}$ and $M_{(k)}'$, where $B_{(k)}:=\sum_{i} b_{k,i}B_{k,i},M_{(k)}:=\sum_j \mu_{k,j}M_{k,j}$, and a sequence of projections $g_{k}:\bar\Ii\to \bar{\Ii}$, such that for any $k,i,$ we have
	\begin{itemize}
	    \item $b_{k,i}\in\Ii,b_{k,i}+\frac{1}{k}\ge g_k(b_{k,i})\ge b_{k,i}$,
	    \item $(X_{k}/Z_k,B_{(k)}+M_{(k)})$ is $\Rr$-complementary
	    \item $(X_{k},B_{(k)}''+M_{(k)})$ is g-lc, where $B_{(k)}'':=\sum_{i} g_k(b_{k,i})B_{k,i}$, and
	    \item $-(K_{X_{k}}+B_{(k)}''+M_{(k)})$ is not pseudo-effective over $Z_k$. 
	\end{itemize}

	Possibly replacing $X_k$ by a dlt modification of $(X_{k},B_{(k)}+M_{(k)})$, we may assume that $X_k$ is $\Qq$-factorial for any $k$. We may run an MMP$/Z_{k}$ on $-(K_{X_{k}}+B_{(k)}''+M_{(k)})$ with scaling of an ample$/Z_{k}$ divisor which terminates with a Mori fiber space $Y_k\to Z_k'$ over $Z_k$, such that $-(K_{Y_k}+B_{(Y_{k})}''+M_{Y_k})$ is antiample over $Z_k'$, where $B_{(Y_{k})}'',M_{Y_k}$ are the strict transforms of $B_{(k)}'',M_{(k)}$ on $Y_k$. Since $-(K_{X_k}+B_{(k)}+M_{(k)})$ is pseudo-effective over $Z_k$, $-(K_{Y_k}+B_{(Y_{k})}+M_{Y_k})$ is nef over $Z_k'$, where $B_{(Y_{k})}$ is the strict transform of $B_{(k)}$ on $Y_k$. 
	
	For each $k$, there exist a positive integer $k_j$ and a positive real number $0\le b_k^{+}\le 1$, such that $b_{k,k_j}\le b_k^{+}< g_k(b_{k,k_j})$, and $K_{F_k}+B_{F_k}^{+}+M_{F_{k}}\equiv 0$, where
	\begin{align*}
	K_{F_k}+B_{F_k}^{+}+M_{F_k}:=&(K_{Y_{k}}+(\sum_{i<k_j}g_k(b_{k,i})B_{Y_k,i})+b_k^{+}B_{Y_k,k_j}\\&+(\sum_{i>k_j}b_{k,i}B_{Y_k,i})+M_{Y_k})|_{F_k},
	\end{align*}
	$B_{Y_k,i}$ is the strict transform of $B_{k,i}$ on $Y_k$ for any $i$, and $F_k$ is a general fiber of $Y_k\to Z_k'$. Since $(X_{k},B_{(k)}+M_{(k)})$ is $\Rr$-complementary, $(Y_{k},B_{(Y_{k})}+M_{(k)})$ is g-lc. Thus $(Y_{k},B_{(Y_{k})}'+M_{Y_k})$ and $(F_k,B_{F_k}^{+}+M_{F_k})$ are g-lc.
	
	Since $g_k(b_{k,k_j})$ belongs to the DCC set $\bar{\Ii}$ for any $k,k_j$, possibly passing to a subsequence, we may assume that $g_k(b_{k,k_j})$ is increasing. Since $g_k(b_{k,k_j})-b_k^{+}>0$ and 
	$\lim_{k\to +\infty} (g_k(b_{k,k_j})-b_k^{+})=0,$
	by \cite[Lemma 5.21]{HLS19}, possibly passing to a subsequence, we may assume that $b_{k}^{+}$ is strictly increasing. 
	
	Now $K_{F_k}+B_{F_k}^{+}+M_{F_k}\equiv0$, the coefficients of $B_{F_k}^{+}$ belong to the DCC set $\bar{\Ii}\cup\{b_k^{+}\}_{k=1}^{\infty}$, and $b_{k}^{+}$ is strictly increasing. This contradicts Theorem \ref{thm: globalACC}. Thus there exist a finite set $\Gamma'\subseteq\bar{\Gamma}$ and a projection $g:\bar{\Gamma}\to\Gamma'$ satisfy (i), (ii), (iii) and (iv). Following the same argument as above, there exists a finite set $\Gamma'\subseteq\bar{\Gamma}$ and a projection $g:\bar{\Gamma}\to\Gamma'$ satisfy (i), (ii), (iii) and
	\begin{itemize}
	  \item[(iv)''] $-(K_X+\sum g(b_i)B_i+\sum g(\mu_j)M_j)$ is pseudo-effective over $Z$.
	\end{itemize}
	
	\medskip

	Since $-(K_{X}+\sum g(b_i)B_{i}+\sum g(\mu_j)M_j)$ is pseudo-effective over $Z$, we may run an MMP$/Z$ on $-(K_{X}+\sum g(b_i)B_{i}+\sum g(\mu_j)M_j)$ which terminates with a good minimal model $X''$, such that $-(K_{X''}+\sum g(b_i)B_{i}''+\sum g(\mu_j)M_j'')$ is semiample over $Z$, where $B_{i}'',M_j''$ are the strict transforms of $B_i,M_j$ on $X''$ respectively. Since $(X/Z,(\sum b_iB_i)+(\sum g(\mu_j)M_j))$ is $\Rr$-complementary, $(X''/Z,(\sum b_iB_{i}'')+(\sum \mu_jM_j''))$ is $\Rr$-complementary. Hence $(X'',\sum b_iB_{i}''+(\sum \mu_jM_j''))$ is g-lc, and $(X'',\sum g(b_i)B_{i}''+(\sum g(\mu_j)M_j''))$ is g-lc. Thus $(X''/Z,(\sum g(b_i)B_{i}'')+(\sum g(\mu_j)M_j'')$ is $\Rr$-complementary, and $(X/Z,(\sum g(b_i)B_i)$ $+\sum g(\mu_j)M_j)$ is also $\Rr$-complementary.	
\end{proof}

\subsection{Proof of Theorem \ref{thm: (n,I)complforfiniterat}}
\begin{proof}[Proof of Theorem \ref{thm: (n,I)complforfiniterat}]   
     We may assume that $1\in\Ii$. Possibly replacing $(X,B+M)$ by a g-dlt modification, we may assume that $X$ is $\Qq$-factorial. By Theorem \ref{thm: dcc limit lc complementary}, we may assume that $\Ii$ is a finite set.
     
     By Theorem \ref{thm: uniformpolytopeforRcomp}, there exist two finite sets $\Ii_0\subseteq(0,1]$ and $\Ii_1\subseteq[0,1]\cap\Qq$ depending only on $d$ and $\Ii$ such that possibly shrinking $Z$ near $z$, we have $(X/Z,B_i+M_{(i)})$ is $\Rr$-complementary, $\sum a_i=1$, and
     $$K_X+B+M=\sum a_i(K_X+B_i+M_{(i)})$$
     for some $a_i\in\Ii_0,B_i\in\Ii_1$ and $M_{(i)}'=\sum_j \mu_{ij}M_{j}'$ with $\mu_{ij}\in\Gamma_1$. Moreover, if $\bar{\Ii}\subseteq\Qq$, then we may pick $\Ii_0=\{1\},$ and $B_i=B.$
     
     For each $i$, we may run an MMP$/Z$ on $-(K_X+B_i+M_{(i)})$ and terminates with a model $Y_i$, such that $-(K_{Y_i}+B_{Y_i,i}+M_{Y_i})$ is nef over $Z$, where $B_{Y_i,i}$ and $M_{Y_i}$ are the strict transforms of $B_i$ and $M'$ on $Y_i$ respectively. Since $(X/Z,B+M)$ is $\Rr$-complementary, $(Y_i,B_{Y_i}+M_{Y_i})$ is g-lc, where $B_{Y_i}$ is the strict transform of $B$ on $Y_i$. According to the construction of $\Ii_1$, $({Y_i},B_{Y_i,i}+M_{Y_i})$ is g-lc. By Theorem \ref{thm: relncomplforfiniterat}, there exists a positive integer $n$ depending on $d$ and $\Ii_1$, such that $(Y_i/Z,B_{Y_i,i}+M_{Y_i}))$ has a monotonic $n$-complement $(Y_i/Z,B_{Y_i,i}^++M_{Y_i}))$.  
     
     By Lemma \ref{lem: pullbackcomplements}, $(X/Z,B_{i}+M_{(i)})$ has a monotonic $n$-complement $(X/Z,(B_{i}+G_i)+M_{(i)})$ for some $G_{i}\ge0$. Let $B^{+}:=\sum a_i(B_{i}+G_i)$. Then $(X/Z,B_{i}^++M)$ is an $(n,\Gamma_0)$-complement of $(X/Z,B_{i}+M)$.
\end{proof}

\section{Existence of $n$-complements}\label{section5}
\subsection{Diophantine approximation}
The following lemma is similar to \cite[Lemma 6.6]{HLS19} and \cite[Lemma 6.1]{CH20}, but in a slightly different form.
\begin{lem}\label{lem:rational direction}
     Let $n_0,s$  be two positive integers, $\epsilon_0$ a positive real number, $\bm{v}_0\in\Rr^s\setminus\Qq^{s}$ a point, $V\subseteq\Rr^s$ the rational envelope of $\bm{v}_0$, $||.||$ a norm on $V$ and $\bm{e}\in V$ a non-zero vector. Let $\Ii\subseteq[0,1]$ and $\Ii_0\subseteq(0,1]$ be finite sets. Then there exist a positive integer $n_0|n$ and a vector $\bm{v}$ depending only on $n_0,s,\epsilon_0,\bm{v}_0,||.||,\bm{e},\Ii$ and $\Ii_0$ satisfying the following.

     Assume that $a_i\in\Ii_0$ and $b_{ij}\in\frac{1}{n_0}\Zz\cap[0,1]$ $(1\le i\le k,1\le j\le m)$ such that $\sum_{i=1}^k a_i=1$ and $\sum_{i=1}^k a_ib_{ij}\in\Ii$. Then there exists a point $\bm{a}':=(a_1',\dots,a_k')\in\Rr^k_{>0}$ such that
     \begin{enumerate}
       \item $\sum_{i=1}^k a_i'=1$,
       \item $n(\bm{a}',\bm{v})\in n_0\Zz^{s+k},$
       \item $||\bm{v}_0-\bm{v}||<\frac{\epsilon_0}{n}$,
       \item $||\frac{\bm{v}_0-\bm{v}}{||\bm{v}_0-\bm{v}||}-\frac{\bm{e}}{||\bm{e}||}||<\epsilon_0$, and
       \item $n\sum_{i=1}^k a_i'b_{ij}=n\lfloor \sum_{i=1}^k a_ib_{ij} \rfloor+\lfloor (n+1)\{\sum_{i=1}^k a_ib_{ij}\}\rfloor$ for any $1\le j\le m$.
     \end{enumerate}
     
     Moreover, if there exist real numbers $r_0:=1,r_1,\dots,r_c$ which are linearly independent over $\Qq$ such that $\Ii\subseteq\Span_{\Qq_{\ge0}}(\{r_0,\dots,r_c\}),$ then we can additionally require that $\sum_{i=1}^k a_i'b_{ij}\ge \sum_{i=1}^k a_ib_{ij}$ for any $1\le j\le m.$
\end{lem}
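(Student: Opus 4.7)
I plan to prove the lemma by a joint Diophantine approximation argument, following the scheme of \cite[Lemma 6.6]{HLS19} and \cite[Lemma 6.1]{CH20}. The first step is a reduction to finitely many ``types'' for the data $((a_i),(b_{ij}))$. Since $\Ii_0\subseteq(0,1]$ has positive minimum, the constraint $\sum a_i=1$ forces $k\le\lfloor 1/\min\Ii_0\rfloor$; and $a_i\in\Ii_0$, $b_{ij}\in\tfrac{1}{n_0}\Zz\cap[0,1]$, $\gamma_j:=\sum_i a_ib_{ij}\in\Ii$ each range over finite sets. Thus there are only finitely many types modulo permutation of the index $i$, and it suffices to exhibit $(n,\bm{v})$ working simultaneously for every type; this is achieved by intersecting the finitely many arithmetic-progression conditions produced by the individual types together with those from the $\bm{v}$-approximation.

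Fix a type. Condition (5) rewrites as $\sum_i a_i' b_{ij}=\gamma_j^+$, where $\gamma_j^+:=\lfloor\gamma_j\rfloor+\tfrac{\lfloor(n+1)\{\gamma_j\}\rfloor}{n}\in\Qq$; the cases $\gamma_j\in\{0,1\}$ are automatic from $\sum a_i'=1$ together with the support pattern of $b_{ij}$ forced by $\gamma_j=0$ or $\gamma_j=1$, so one may assume $0<\gamma_j<1$. The strategy is to find $\bm{a}'$ inside the joint rational envelope $W\subseteq\Rr^{k+m}$ of $(\bm{a},(\gamma_j))$: by construction $W$ is contained in the rational affine set $\{\sum x_i=1,\ y_j=\sum x_ib_{ij}\}$, so any rational point of $W$ sufficiently close to $(\bm{a},(\gamma_j))$ yields $\bm{a}'$ satisfying $\sum a_i'=1$ with $\sum_i a_i'b_{ij}\in\Qq$ close to $\gamma_j$. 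To land precisely on $\gamma_j^+$, one restricts $n$ so that $(\gamma_j^+-\gamma_j)_j$ lies in the rational direction subspace of the $\bm{y}$-projection of $W$; this is a Diophantine condition on the fractional parts $\{(n+1)\gamma_j\}$, and since these are $\Qq$-linear in finitely many $\Qq$-linearly independent irrationals (determined by the rational envelope), Weyl equidistribution supplies a positive-density set of admissible $n$ for each type.

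For $\bm{v}$, since $\bm{v}_0\notin\Qq^s$ the envelope $V$ has positive dimension, and Dirichlet's theorem applied to a $\Qq$-parameterization of $V$ produces infinitely many $n$ with $n_0\mid n$ and $\bm{v}\in V\cap\tfrac{n_0}{n}\Zz^s$ satisfying $\|\bm{v}_0-\bm{v}\|<\epsilon_0/n$; the direction condition (4) is enforced by restricting the approximation to a thin open cone around $\bm{v}_0+\Rr_{>0}\bm{e}$ inside $V$. A common $n$ is then extracted by intersecting the arithmetic progressions from the $\bm{v}$-side with those from every type, and positivity $a_i'>0$ and closeness to $a_i$ follow from the $O(1/n)$ bound on $\|\bm{a}-\bm{a}'\|$. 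For the moreover clause, when $\Ii\subseteq\Span_{\Qq_{\ge 0}}(\{r_0,\dots,r_c\})$ with $r_0,\dots,r_c$ $\Qq$-linearly independent, the inequality $\sum_i a_i'b_{ij}\ge\gamma_j$ is equivalent to the one-sided toral condition $\{(n+1)\{\gamma_j\}\}\le\{\gamma_j\}$; joint Weyl equidistribution of $((n+1)r_1,\dots,(n+1)r_c)$ on $(\Rr/\Zz)^c$ then supplies a compatible positive-density set of $n$. The main technical obstacle will be coordinating all these equidistribution conditions so that a single $n$ works simultaneously for every type, every index $j$, and the $\bm{v}$-approximation; this ultimately rests on $\Qq$-linear independence of the underlying irrational parameters coming from the rational-envelope structure.
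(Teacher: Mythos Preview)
Your plan follows a plausible Diophantine-approximation strategy, but it is considerably more involved than the paper's, and the step you flag as the ``main technical obstacle'' is precisely what the paper's approach renders automatic.

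The paper's key simplification is to parameterize \emph{everything} by a single tuple of $\Qq$-linearly independent reals. One picks an integral basis $\bm{v}_1,\dots,\bm{v}_c$ of $V$ with $\bm{e}\in\Span_{\Rr_{\ge0}}(\{\bm{v}_1,\dots,\bm{v}_c\})$, writes $\bm{v}_0=\sum_{i=1}^c r_i\bm{v}_i$, and extends $r_0=1,r_1,\dots,r_c$ to $r_0,\dots,r_{c'}$ (still $\Qq$-linearly independent) so that $\Ii_0\subseteq\Span_\Qq(\{r_0,\dots,r_{c'}\})$. Each $a_i\in\Ii_0$ is then the value at $\bm{r}_0:=(r_1,\dots,r_{c'})$ of a $\Qq$-linear function $a_i(\bm{r})$, with denominators bounded by some $l$ and Lipschitz constant bounded by some $M$, both depending only on $\Ii_0$. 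A single invocation of \cite[Lemma~6.5]{HLS19} yields $ln_0\mid n$ and a rational $\bm{r}_0'$ with $n\bm{r}_0'\in ln_0\Zz^{c'}$, $\|\bm{r}_0-\bm{r}_0'\|_\infty<\epsilon''/n$, and the required direction estimate in the first $c$ coordinates. One then simply sets $\bm{v}:=\sum r_i'\bm{v}_i$ and $a_i':=a_i(\bm{r}_0')$. Conditions (1)--(4) are immediate from the construction; crucially, (5) is \emph{not} imposed as an extra constraint on $n$ but is deduced from the two facts $n\sum_i a_i'b_{ij}\in\Zz$ (since $na_i'\in n_0\Zz$ and $n_0b_{ij}\in\Zz$) and $n\sum_i|a_i-a_i'|b_{ij}<\epsilon'<\min(\gamma_j,1-\gamma_j)$, which sandwich $n\sum_i a_i'b_{ij}$ as the unique integer in $[(n+1)\gamma_j-1,(n+1)\gamma_j)$.

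Compared with this, your approach has two soft spots. First, the description of the constraints as ``arithmetic-progression conditions'' is inaccurate: the $\bm{v}$-approximation and the landing-on-$\gamma_j^+$ conditions are equidistribution-type constraints on $\{nr_\ell\}$, and intersecting positive-density sets only works once you have identified a common $\Qq$-linearly independent parameter set governing all of them---which is exactly the reduction above. Second, you do not explain how to produce $\bm{a}'$ with $n\bm{a}'\in n_0\Zz^k$ \emph{and} $\sum_i a_i'b_{ij}=\gamma_j^+$ simultaneously; finding a rational point of the envelope $W$ close to $(\bm{a},(\gamma_j))$ does not by itself control the denominators of the $\bm{x}$-coordinates. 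Filling this in would essentially force you to parameterize $W$ by a $\Qq$-linear map and approximate the parameter, at which point you have reinvented the paper's argument. The moreover clause in the paper is likewise handled without a separate one-sided toral argument: since $\gamma_j\in\Span_{\Qq_{\ge0}}(\{r_0,\dots,r_c\})$, the function $\bm{r}\mapsto\sum_i a_i(\bm{r})b_{ij}$ has non-negative coefficients in $r_1,\dots,r_c$, and the direction condition already forces $r_\ell'-r_\ell$ to lie in a small cone near $\bm{e}_0\in\Rr^c_{\ge0}$.
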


\begin{proof}  
     Let $\bm{v}_1,\dots,\bm{v}_c\in\Zz^s$ be a basis of $V$ such that $\bm{e}\in\Span_{\Rr_{\ge0}}(\{\bm{v}_1,\dots,\bm{v}_c\})$. Then there exist $c'\ge c,r_1,\dots,r_{c'}$ and $e_1,\dots,e_c\ge0$ such that $r_0:=1,r_1,\dots,r_{c'}$ are linearly independent over $\Qq$, $\Ii_0\subseteq\Span(\{r_0,\dots,r_{c'}\})$, and
     $$\bm{v}=\sum_{i=1}^c r_i\bm{v}_i,\bm{e}=\sum_{i=1}^c e_i\bm{v}_i.$$ 
     
    There exist positive integers $l,M$ and $\Qq$-linear functions $a_i(\bm{r}):\Rr^{c'}\to \Rr$ depending only on $\Ii_0$ such that $a_i(\bm{r}_0)=a_i,$ $la_i(\bm{r})$ is a $\Zz$-linear function and 
    $$|a_i(\bm{r})-a_i(\bm{0})|\le M||\bm{r}||_{\infty}$$ 
    for any $\bm{r}\in\Rr^{c'}$ and $i,$ where $\bm{r}_0:=(r_1,\dots,r_{c'})$ and $\bm{0}:=(0,\dots,0)\in\Rr^{c'}$.
    
    We define a norm $||.||_*$ on $V$. For any $\bm{x}\in V$, there exist unique real numbers $x_1,\dots,x_c$ such that $\bm{x}=\sum_{i=1}^c x_i\bm{v}_i$, we define $||\bm{x}||_*:=\max_{1\le i\le c}\{|x_i|\}$.
    
    By \cite[Lemma 6.4]{HLS19}, there exists a positive real number $M_1$ such that 
    $$||\frac{\bm{x}}{||\bm{x}||}-\frac{\bm{y}}{||\bm{y}||}||\le M_1||\frac{\bm{x}}{||\bm{x}||_*}-\frac{\bm{y}}{||\bm{y}||_*}||_*$$
    for any non-zero vectors $\bm{x},\bm{y}\in V$. Moreover, possibly replacing $M_1$ by a larger number, we may assume that $||\bm{x}||\le M_1||\bm{x}||_*$ for any vector $\bm{x}\in V$.
    
    Let $\epsilon'$ be a positive real number such that 
    $$\epsilon'<\min_{\gamma_1\in\Ii,\gamma_2\in\Ii_0}\{\gamma_1>0,1-\gamma_1>0,\gamma_2,1\},$$
    and $\epsilon''$ a positive real number such that $\epsilon''<\min\{\frac{\epsilon'^2}{M},\frac{\epsilon_0}{M_1}\}$. By \cite[Lemma 6.5]{HLS19}, there exist an integer $ln_0|n$ and a point $\bm{r}_0':=(r_1',\dots,r_{c'}')\in\Rr^{c'}$, such that
    \begin{itemize}
       \item $n\bm{r}_0'\in ln_0\Zz^{c'}$,
       \item $||\bm{r}_0'-\bm{r}_0||_{\infty}<\frac{\epsilon''}{n}$, and
       \item $||\frac{\bm{c}-\bm{d}}{||\bm{c}-\bm{d}||_\infty}
             -\frac{\bm{e}_0}{||\bm{e}_0||_\infty}||_\infty<\epsilon''$, where $\bm{c}:=(r_1',\dots,r_c'),\bm{d}:=(r_1,\dots,r_c)$ and $\bm{e}_0:=(e_1,\dots,e_c)$.
    \end{itemize}
    
    Let $\bm{v}:=\sum_{i=1}^c r_i'\bm{v}_i$, then we have $n\bm{v}=\sum_{i=1}^c nr_i'\bm{v}_i\in n_0\Zz^s$,
    $$||\bm{v}-\bm{v}_0||\le M_1||\bm{v}-\bm{v}_0||_*\le M_1||\bm{r}_0'-\bm{r}_0||_{\infty}<\frac{\epsilon_0}{n},$$
    and
    \begin{align*}
    &||\frac{\bm{v}-\bm{v}_0}{||\bm{v}-\bm{v}_0||}
             -\frac{\bm{e}}{||\bm{e}||}||\le M_1 ||\frac{\bm{v}-\bm{v}_0}{||\bm{v}-\bm{v}_0||_*}
             -\frac{\bm{e}}{||\bm{e}||_*}||_*\\&=M_1||\frac{\bm{c}-\bm{d}}{||\bm{c}-\bm{d}||_\infty}
             -\frac{\bm{e}_0}{||\bm{e}_0||_\infty}||_\infty
             <M_1\epsilon''<\epsilon_0.
     \end{align*}
    
    Let $a_i':=a_i(\bm{r}_0')$ for any $1\le i\le k$ and $\bm{a}':=(a_1',\dots,a_k')$. Since $\sum_{i=1}^k a_i=1$ and $r_0,r_1,\dots,r_{c'}$ are linearly independent over $\Qq,\sum_{i=1}^k a_i(\bm{r})=1$ for any $\bm{r}\in\Rr^{c'}$. In particular, $\sum_{i=1}^k a_i'=1.$ Moreover, we have 
    $ na_i'=n_0\frac{n}{ln_0}la_i(\bm{r}_0')\in n_0\Zz$ for any $1\le i\le k,$ and 
    $$||\bm{a}-\bm{a}'||_\infty\le M||\bm{r}_0-\bm{r}'_0||_\infty< M\frac{\epsilon''}{n}<\frac{\epsilon'^2}{n},$$
    where $\bm{a}:=(a_1,\dots,a_k)$. In particular, $a_i'$ are positive real numbers, since 
    $$a_i'\ge a_i-|a_i-a_i'|\ge a_i-||\bm{a}-\bm{a}'||_\infty>a_i-\frac{\epsilon'^2}{n}>0.$$ 
   
    It suffices to show $(6)$. If $\sum_{i=1}^k a_ib_{ij}=1$ for some $j$, then $b_{ij}=1$ for any $1\le i\le k$ as $\sum_{i=1}^k a_i=1$ and $1\ge b_{ij}\ge0.$ Thus $\sum_{i=1}^k a_i'b_{ij}=1$. Hence we may assume that $1>\sum_{i=1}^k a_ib_{ij}>0$ and $\sum_{i=1}^k a_i'b_{ij}<1$. Since $n\sum_{i=1}^k a_i'b_{ij}=\sum_{i=1}^k \frac{n}{n_0}a_i'\cdot(n_0b_{ij})\in\Zz$, we only need to show that
    $$n\sum_{i=1}^k a_i'b_{ij}+1>(n+1)\sum_{i=1}^k a_ib_{ij}\ge n\sum_{i=1}^k a_i'b_{ij}.$$    
    The above inequalities hold since $\sum_{i=1}^k a_ib_{ij}\in\Ii$, $k\epsilon'<\sum_{i=1}^k a_i=1$, and
    $$ n\sum_{i=1}^k |a_i'-a_i|b_{ij}< nk\cdot\frac{\epsilon'^2}{n}<\epsilon'<\min\{\sum_{i=1}^k a_ib_{ij},1-\sum_{i=1}^k a_ib_{ij}\}.$$
     
     If $\Ii\subseteq\Span_{\Qq_{\ge0}}(\{r_0,\dots,r_c\}),$ then by our choices of $a_i'$ and $\bm{r}_0'$, 
     $$\sum_{i=1}^ka_i'b_{ij}=\sum_{i=1}^ka_i(\bm{r}_0')b_{ij}\ge\sum_{i=1}^ka_i(\bm{r}_0)b_{ij}=\sum_{i=1}^ka_ib_{ij}$$ 
     for any $1\le j\le s$.
\end{proof}

\subsection{Proof of Theorem \ref{thm: existence of n complement for generalized pairs}}
     
\begin{proof}[Proof of Theorem \ref{thm: existence of n complement for generalized pairs}]
     According to Theorem \ref{thm: (n,I)complforfiniterat}, there exist a positive integer $n_0$ and a finite set $\Ii_0\subseteq(0,1]$ depending only on $d$ and $\Ii$, such that possibly shrinking $Z$ near $z$, $(X/Z,B+M)$ has an $(n_0,\Ii_0)$-decomposable $\Rr$-complement $(X/Z,\tilde{B}+\tilde{M})$. In particular, there exist $a_i\in\Ii_0$ and boundaries $\tilde{B}_i'$ with nef parts $\tilde{M}_i'$ such that $(X/Z,\tilde{B}_i+\tilde{M}_i)$ is an $n_0$-complement of itself for any $i$, and
     $$K_X+\tilde{B}+\tilde{M}=\sum a_i(K_X+\tilde{B}_i+\tilde{M}_i).$$
     By Lemma \ref{lem:rational direction}, there exist a positive integer $n$ divisible by $pn_0$ and a vector $\bm{v}\in V$ depending only on $\epsilon,p,n_0,\bm{v}_0,\bm{e}$ and $\Ii_0$ such that there exist positive rational numbers $a_i'$ with the following properties:
     \begin{itemize}
         \item $\sum a_i'=1$,
         \item $n\bm{v}\in\Zz^s$, and $na_i'\in n_0\Zz$ for any $i$,
         \item $||\bm{v}-\bm{v}_0||<\frac{\epsilon}{n}$, 
         \item $||\frac{\bm{v}-\bm{v}_0}{||\bm{v}-\bm{v}_0||}-\frac{\bm{e}}{||\bm{e}||}||<\epsilon$,
         \item $nB^+\ge n\lfloor \tilde{B}\rfloor+\lfloor (n+1)\{\tilde{B}\}\rfloor$, where $B^+:=\sum a_i' \tilde{B}_i'$, and
         \item $n\sum_{i} a_i'\mu_{ij}=n\lfloor \sum_{i} a_i\mu_{ij} \rfloor+\lfloor (n+1)\{\sum_{i} a_i\mu_{ij}\}\rfloor$ for any $j$, where $\tilde{M}_i'=\sum_j\mu_{ij}M_j'$.
     \end{itemize}
     Let $M^{+'}=\sum_i a_i'\tilde{M}_i'=\sum_j(\sum_i a_i'\mu_{ij})M_j'$, then
     \begin{align*}
     n(K_{{X}}+B^++M^+)&=n\sum a_i'(K_{X}+\tilde{B}_i+\tilde{M}_i)
     \\&=\sum \frac{a_i'n}{n_0}\cdot n_0(K_{X}+\tilde{B}_i+\tilde{M}_i)\sim_Z0.
     \end{align*}
     Hence $(X/Z,B^++M^+)$ is an $n$-complement of $(X/Z,B+M)$, since $\tilde{B}\ge B$ and $\tilde{\mu}_j\ge\mu_j$ for any $j$.

     Moreover, if $\Span_{\Qq_{\ge0}}(\bar{\Ii}\backslash\Qq)\cap (\Qq\backslash\{0\})=\emptyset$, then ${B}^+\ge B'\ge B$ and $\mu_j^+\ge\tilde{\mu_j}\ge\mu_j$ for any $j$ by Lemma \ref{lem:rational direction} and \cite[Lemma 6.3]{HLS19}.
\end{proof}

\bibliographystyle{alpha}

\end{document}